 \newtheorem{thm}{Theorem}[section]
 \newtheorem{lem}[thm]{Lemma}
 \newtheorem{prop}[thm]{Proposition}
 \newtheorem*{ThmA}{Theorem}
 \newtheorem*{CorA}{Corollary}
 \theoremstyle{definition}
 \theoremstyle{remark}
 \newtheorem{rem}[thm]{Remark}
 \newtheorem{remNot}{Remark on the notations}
 \numberwithin{equation}{section}
\def\polhk#1{\setbox0=\hbox{#1}{\ooalign{\hidewidth
    \lower1.5ex\hbox{`}\hidewidth\crcr\unhbox0}}}
\renewcommand{\Im}{\operatorname{Im}} 
\newcommand{\Dom}{\operatorname{Dom}\,}
\newcommand{\Dil}{\operatorname{Dil}\,}
\newcommand{\daron}{\operatorname{int}}
\renewcommand{\mod}{\operatorname{mod}\,}
\newcommand{\eps}{\varepsilon}
\newcommand{\pc}{\mathcal{PC}}
\newcommand{\alf}{\alpha}
\newcommand{\cc}{\mathbb{C}}
\newcommand{\ra}{\rightarrow}
\newcommand{\rr}{\mathcal{R}}
\newcommand{\M}{\mathcal{M}} 
\newcommand{\B}{\mathcal{B}} 
\newcommand{\mfr}{\mathfrak{R}}
\newcommand{\hh}{\mathscr{H}}
\newcommand{\LL}{\mathscr{L}}
\newcommand{\BV}{\mathbf{V}}
\newcommand{\BU}{\mathbf{U}}
\newcommand{\Bpsi}{\mbox{\boldmath $\psi$}}
\newcommand{\BDelta}{\mbox{\boldmath $\Delta$}}
\newcommand{\Bh}{\mathbf{h}}
\newcommand{\Bf}{\mathbf{f}}
\newcommand{\BJ}{\mathbf{J}}
\newcommand{\BB}{\mathbf{B}}
\newcommand{\BL}{\mathbf{L}}
\newcommand{\Bi}{\textbf{i}}
\newcommand{\BE}{\textbf{E}}
\newcommand{\BK}{\textbf{K}}
\begin{document}
\title[Combinatorial Rigidity of unicritical polynomials]{Combinatorial Rigidity for Some Infinitely Renormalizable 
Unicritical Polynomials}
\author[D. Cheraghi]{Davoud Cheraghi}
\address {Mathematics Department\\  Stony Brook University\\ Stony Brook,  NY 11794, USA}
\email{d.cheraghi@warwick.ac.uk}
\subjclass[2010]{Primary 37F45, 37F25; Secondary 37F30.}
\date{}

\begin{abstract}
Here we prove that \textit{infinitely renormalizable} unicritical polynomials $P_c:z \mapsto z^d+c$, 
with $c\in\mathbb{C}$, satisfying \textit{a priori bounds} and a certain ``combinatorial'' condition 
are \textit{combinatorially rigid}. 
This implies the local connectivity of the connectedness loci (the Mandelbrot set when $d=2$) at the
corresponding parameters.
\end{abstract}

\maketitle

\section{Introduction}
The \textit{Multibrot set} $\M_d$, or the \textit{connectedness locus} of the unicritical polynomials, 
is the set of parameter values $c$ in $\mathbb{C}$ for which the
\textit{Julia set} of $P_c:z \mapsto z^d+c$ is connected. 
The set $\M_2$ is the well-known \textit{Mandelbrot} set.

There is a way of defining graded partitions (puzzle pieces) of the Multibrot set 
such that the dynamics of the maps $P_c$ in each piece have some
special combinatorial property. All maps in a given piece of a partition
of a certain level are called \textit{combinatorially
equivalent} up to that level. Conjecturally, combinatorially
equivalent (up to all levels) \textit{non-hyperbolic} maps in this family are
\textit{conformally conjugate}. As stated in \cite{DH1a} for $d=2$,
this \textit{rigidity conjecture} is equivalent to the local
connectivity of the Mandelbrot set, and it naturally extends to
degree $d$ unicritical polynomials. In the quadratic case, this
conjecture is formulated as MLC by Douady and Hubbard. They
also proved there that MLC implies the density of \textit{hyperbolic polynomials} in the space of
quadratic polynomials. These discussions have been extended to
degree $d$ unicritical polynomials by Schleicher in \cite{Sch}.

In the 1990's, Yoccoz proved that $\M_2$ is locally connected at all non-hyperbolic parameter values
which are at most finitely \textit{renormalizable}. He also proved 
the local connectivity of the Julia sets of these maps with all 
periodic points repelling (see \cite{H}). The degree two assumption was essential in his proof.

In \cite{Ly97},  Lyubich proved the combinatorial rigidity conjecture for a class
of infinitely renormalizable quadratic polynomials. These are quadratic 
polynomials satisfying a \textit{secondary limbs condition}, denoted by 
$\mathcal{SL}$,  with sufficiently high return times. The proof in this case
also relies on the degree two assumption. 

The local connectivity of the Julia sets of degree $d$
unicritical polynomials which are at most finitely
renormalizable and with all periodic points repelling has been shown in \cite{KL1}. Their proof 
is based on ``controlling'' the geometry of a \textit{modified principal nest}. The same
controlling technique has been used to settle the rigidity problem for
these parameters in \cite{AKLS}. 

Recently, the \textit{a priori} bounds property, a type of compactness on renormalization levels, has been established for more parameters. In \cite{K2}, it is proved for 
infinitely \textit{primitively} renormalizable maps of bounded type. In \cite{KL2}, it is
proved for parameters satisfying a \textit{decorations} condition and in \cite{KL3},
under a \textit{molecule} condition. Here we prove that the \textit{a priori} bounds property, under 
the $\mathcal{SL}$ condition, implies the combinatorial rigidity conjecture for infinitely renormalizable maps. 
The $\mathcal{SL}$ class includes all parameters for which \textit{a priori} bounds is known to us.

\begin{ThmA}[Rigidity]
Let $P_c$ be an infinitely renormalizable degree $d$ unicritical polynomial satisfying the \textit{a priori bounds}
and $\mathcal{SL}$ conditions. Then $P_c$ is combinatorially rigid.
\end{ThmA}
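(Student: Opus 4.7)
The plan is to follow the general Lyubich--McMullen scheme of promoting combinatorial equivalence to conformal conjugacy through a controlled quasiconformal pseudo-conjugacy. Suppose $P_c$ and $P_{\tilde c}$ are infinitely renormalizable, combinatorially equivalent, and both satisfy the \emph{a priori} bounds and the $\mathcal{SL}$ condition. The goal is to show $c=\tilde c$. Assume, for contradiction, that $c\neq\tilde c$.

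First I would build, at every renormalization level $n$, a Yoccoz-type puzzle partition for $P_c$ and $P_{\tilde c}$ adapted to the chosen sequence of renormalizations. Using the combinatorial equivalence, one then constructs a sequence of homeomorphisms $h_n:\cc\to\cc$ which are holomorphic in a neighborhood of infinity, map the puzzle pieces of level $n$ of $P_c$ to those of $P_{\tilde c}$ preserving markings, and conjugate the two dynamics on the boundaries of the pieces. The standard way to do this is to lift a K\"onigs/B\"ottcher-type boundary conjugacy inward by pullback; the $\mathcal{SL}$ condition provides the bounded combinatorial data needed to make this construction controlled.

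The core step is to pass to a limit $h$ of (a subsequence of) the $h_n$ that is quasiconformal with dilatation bounded independently of $n$. This is where the \emph{a priori} bounds enter essentially: they furnish pre-compactness of the renormalizations $\mfr^n P_c$, $\mfr^n P_{\tilde c}$ in a suitable space of polynomial-like maps, so that the moduli of the annuli between consecutive renormalization pieces are uniformly bounded from below. Combined with the $\mathcal{SL}$ control on return times and the standard Gr\"otzsch--Teichm\"uller estimates, one can interpolate between levels with bounded dilatation, obtaining a qc pseudo-conjugacy $h:\cc\to\cc$ between $P_c$ and $P_{\tilde c}$ that matches on every puzzle piece.

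Finally I would show that the Beltrami coefficient $\mu=\bar\partial h/\partial h$ vanishes almost everywhere. Off the Julia set $J(P_c)$ this is standard: every Fatou component is eventually periodic or lands in the basin of infinity, and $h$ is conformal on the basin of infinity by construction, so $\mu=0$ there. On $J(P_c)$ one invokes the McMullen-type \emph{rigidity of infinitely renormalizable maps with a priori bounds}: such a Julia set supports no measurable invariant line field, so $\mu|_{J(P_c)}=0$. Then $h$ is conformal, hence affine, and comparison of the coefficients forces $c=\tilde c$, a contradiction. The main obstacle I foresee is the controlled interpolation step: keeping the dilatation of the pseudo-conjugacies $h_n$ bounded between consecutive renormalization levels for a degree $d$ unicritical family (rather than quadratic) requires adapting Lyubich's secondary-limbs arguments so that the geometry of the modified principal nest, together with the $\mathcal{SL}$ assumption, yields definite moduli independently of $d$ and of the level $n$; the degree-independent ruling out of invariant line fields on the Julia set will then follow by the standard McMullen argument applied to the \emph{a priori} bounded renormalization tower.
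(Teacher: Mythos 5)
Your plan has the right broad frame (combinatorial $\Rightarrow$ topological $\Rightarrow$ q.c.\ $\Rightarrow$ hybrid, with McMullen's no--line--field argument supplying the last step), but the middle step as you describe it has a genuine gap. You propose to build pseudo-conjugacies $h_n$ level by level and then extract a $K$-q.c.\ conjugacy $h$ as a subsequential limit. The obstruction is that for an infinitely renormalizable map the AKLS modified--principal--nest construction terminates: the critical puzzle pieces do not shrink to a point but to the little Julia set of the first renormalization, so the pseudo-conjugacy of Theorem~\ref{pseudo-conj} gives an equivariant map only outside a domain of \emph{fixed} size. To go deeper you must pass through straightening maps $S_n$ of the renormalizations $\mathcal{R}^n f$. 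Even though conjugating by $S_n$ preserves the conjugacy property level-by-level, the maps constructed on successive renormalization scales do not match as conjugacies on the overlap annuli; the interpolating ``gluing'' maps on those annuli are mere q.c.\ interpolations, not conjugacies. Consequently the limit $H$ of the partial maps $H_n$ is a conjugacy only on the post-critical set --- it is a \emph{Thurston conjugacy}, not a conjugacy, and no amount of compactness extraction turns it into one. Your proposal never addresses this and in effect asserts the conclusion of the hard step.

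The paper resolves this precisely by aiming for a Thurston conjugacy rather than a conjugacy, and then invoking the Sullivan--Thurston pullback argument (Lemma~4.4) to upgrade it to an honest q.c.\ conjugacy. That upgrade is sensitive to the homotopy class of $H$ relative to $\pc(f)$, which is why the construction keeps careful track of the ``twist'' numbers $k_n$ in Lemma~\ref{adjust} and proves, via Lemmas~\ref{homos}, \ref{homos2}, and \ref{interpolation}, that the glued map lies in the homotopy class of the topological conjugacy $\Psi$. Your proposal mentions pullback only in the sense of lifting a boundary conjugacy inward to build $h_n$, not the Thurston pullback that promotes a Thurston conjugacy; and it contains no mechanism for controlling the homotopy class across renormalization scales, which is the technical heart of the proof. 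You also do not separate the three combinatorial cases $\mathscr{A}$/$\mathscr{B}$/$\mathscr{C}$ (primitive vs.\ satellite followed by primitive vs.\ twice satellite), where the $\mathcal{SL}$ condition is used in structurally different ways; in particular, the doubly satellite case requires the ``Julia bouquet'' geometry (Lemma~\ref{bouquet}), since no equipotential exists for $\mathcal{R}^2 \mathbf{f}_{c_{n-1}}$ and a purely equipotential-based puzzle construction would fail.
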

 
This result was proved in part II of \cite{Ly97} for quadratic polynomials. 
That  proof as well as the one presented here are based on the Sullivan-Thurston pullback argument. 
However, the one in \cite{Ly97} uses linear growth of certain moduli along the principal nest which does not 
hold for arbitrary degree unicritical polynomials. 
It turns out that the definite modulus of certain annuli in a modified principal nest introduced in \cite{AKLS} helps us 
to easily ``pass'' over the principal nest. 
This makes the whole construction simpler and more general so that we can  include unicritical polynomials of arbitrary degree. Combining the above theorem with \cite{KL2} and \cite{KL3} 
we obtain the following:
{\samepage
\begin{CorA}
Assume that $P$ and $\tilde{P}$ are combinatorially equivalent infinitely renormalizable unicritical polynomials 
with one of the following conditions:
\begin{itemize}
\item[--] $P$ and $\tilde{P}$ are quadratic and satisfy the molecule condition, 

\hspace{-.5cm}or,
 
\item[--] $P$ and $\tilde{P}$ have arbitrary degree and satisfy the decoration condition.
\end{itemize}
Then, $P$ and $\tilde{P}$ are conformally equivalent.
\end{CorA}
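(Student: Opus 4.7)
The plan is to reduce both cases of the corollary to a direct application of the Rigidity Theorem above. For this, we need to check that each of the two listed combinatorial conditions implies the two hypotheses of the theorem: a priori bounds, and the secondary limbs condition $\mathcal{SL}$.

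The first hypothesis, a priori bounds, is essentially a citation in both cases. In the quadratic/molecule case it is the main result of \cite{KL3}, and in the arbitrary-degree/decoration case it is the main result of \cite{KL2}. Since combinatorial equivalence preserves the combinatorial conditions in question, these bounds hold simultaneously for $P$ and for $\tilde P$.

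The second hypothesis, membership in $\mathcal{SL}$, is where the actual verification lies. As noted in the paragraph preceding Theorem A, the author already asserts that $\mathcal{SL}$ contains every parameter for which a priori bounds is currently known, and in particular contains both the molecule and decoration classes. To make this rigorous in each case one unpacks the relevant definitions: both the molecule and decoration conditions exclude, at every renormalization level, a neighborhood of the root of the main molecule (respectively, certain ``bad'' decorations), and one then checks directly that the tuning combinatorics at each level are drawn from a finite collection of secondary limbs. This bookkeeping, rather than any analytic estimate, is expected to be the main obstacle; however, it is a purely combinatorial unpacking of definitions already recorded in \cite{KL2} and \cite{KL3}.

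With both hypotheses in place, apply the Rigidity Theorem to $P$: $P$ is combinatorially rigid, so any non-hyperbolic polynomial combinatorially equivalent to $P$ is conformally conjugate to it. Since $\tilde P$ is infinitely renormalizable it is automatically non-hyperbolic, and by assumption it is combinatorially equivalent to $P$; hence $\tilde P$ is conformally equivalent to $P$, which is the desired conclusion. Note that one need not apply the theorem to $\tilde P$ separately, since combinatorial rigidity of $P$ alone already pins down the whole combinatorial class.
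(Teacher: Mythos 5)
Your high-level reduction is exactly the one the paper uses: cite \cite{KL3} (molecule) and \cite{KL2} (decoration) for \textit{a priori} bounds, check that both combinatorial conditions place the parameter in the class $\mathcal{SL}$, and then invoke the Rigidity Theorem. The bookkeeping you rightly flag as ``the main obstacle,'' however, is where the paper's actual argument lives, and your sketch of it is not quite what makes it work. In Section~5 the paper does \textbf{not} verify $\mathcal{SL}$ independently for each condition by excluding ``a neighborhood of the root'' level by level. Instead it first shows, by a compactness argument, that decoration $\Rightarrow$ molecule, so only one implication needs checking; and then it proves molecule $\Rightarrow \mathcal{SL}$ by invoking the \textbf{Yoccoz inequality} on the sizes of limbs: for every $\eta>0$ only finitely many limbs attached to any given hyperbolic component have diameter $\geq\eta$, hence all but finitely many secondary limbs lie inside the $\eta$-neighborhood of the molecule $\mathscr{M}_d$. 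Since the molecule condition keeps the relevant copies $\tilde{\M}^{n_i}_d$ at Euclidean distance $\geq\eta$ from $\mathscr{M}_d$ for infinitely many primitive levels, the corresponding truncated secondary limbs range over a finite set, which is precisely $\mathcal{SL}$. So this step is not ``purely combinatorial unpacking of definitions'' but rests on a genuine geometric estimate (the Yoccoz inequality), and your description of the exclusion (a neighborhood of a root, at every level) mischaracterizes both \emph{what} is excluded (a neighborhood of the entire molecule) and \emph{where} (only along a subsequence of primitive levels). The rest of your argument --- that combinatorial equivalence carries the condition from $P$ to $\tilde P$, that infinitely renormalizable implies non-hyperbolic, and that one application of rigidity pins down the whole class --- is fine and matches the paper.
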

}
The rigidity problem for a separate combinatorial class of quadratics is treated by a wholly
different approach in \cite{L09} which does not involve the \textit{a priori} bounds property.

The rigidity conjecture for real polynomials has been fully established over the last twenty years. 
The quadratic case was accomplished, independently, in \cite{Ly97} and \cite{GS98}. 
The real multi-critical case was treated in \cite{LvS98}. 
One may refer to these for further references. Our result can be applied to real unicritical polynomials as well.
Therefore, combining with \cite{S1}, it gives a new proof of the density of hyperbolicity in the family 
$x\ra x^{2k}+c$, $k=1,2,\dots$, which was proved earlier in \cite{KSvS07}.

The structure of the paper is as follows. In $\S 2$ we introduce the basics of holomorphic 
dynamics required for our work. In $\S 3$, Yoccoz puzzle pieces are defined, the modified principal
nest is introduced, and combinatorics of unicritical polynomials is discussed. The proof of the main 
theorem, presented in Section $4$, is reduced to the existence of a \textit{Thurston conjugacy} by the pullback 
method. To build such a conjugacy, we start with a topological conjugacy on the whole complex 
plane and then step by step, on finer and finer scales, replace this homeomorphism by 
quasi-conformal maps while sacrificing the equivariance property but staying in the ``right'' homotopy class. 
At the end one obtains a quasi conformal map on the complex plane homotopic to a topological conjugacy 
relative the post-critical set, that is, a Thurston conjugacy.
\subsection*{Acknowledgment}
I am indebted to M. Lyubich for suggesting the problem; this paper would have never been
finished without his great patience. Further thanks are due to R. P\'{e}rez for our very useful
discussions on the combinatorics of the Mandelbrot set. 
Finally, many heartfelt thanks to the referee for many valuable comments to improve the writing of this work.

\section{Polynomials and the connectedness loci}
\subsection{External rays and Equipotentials}
One can read more about the following basics of holomorphic dynamics in \cite{Mi06} and \cite{Br94}.

Let $f:\mathbb{C} \rightarrow \mathbb{C}$ be a monic polynomial of
degree $d$: $f(z)=z^{d}+a_{1}z^{d-1}+\cdots+a_{d}$. Infinity is a
super attracting fixed point of $f$ whose \textit{basin of attraction} is defined as
\[D_{f}(\infty):=\{z\in\mathbb{C}:f^{n}(z):=\overbrace{f\circ f\circ\dots\circ f}^{n \text{ times}}(z)\ra\infty\text{ as }n\ra\infty\}.\]
The complement of $D_{f}(\infty)$ is called the \textit{filled Julia} set: $
K(f):=\mathbb{C}\setminus D_{f}(\infty)$. The \textit{Julia set},
$J(f)$, is defined as the boundary of $K(f)$. 
It is well-known that the Julia set and the filled Julia set of a polynomial are connected
if and only if the orbit of all critical points stay bounded under iteration.

With $f$ as above,  there exists a conformal change of coordinate $B_{f}$,
the \textit{B\"ottcher coordinate}, which conjugates $f$ to the
$d$-th power map $z \mapsto z^{d}$ throughout some neighborhood of
infinity $U_f$. That is,
\begin{equation}
B_{f}:U_{f}\rightarrow \{z\in \mathbb{C}:|z|>r_{f}\geq1\} \label{Bottcher}
\end{equation}
with $B_{f}(f(z))=(B_{f}(z))^{d}$, and $B_{f}(z)\sim z$ as $z\rightarrow \infty$.

In particular, if the filled Julia set is connected, $B_{f}$
coincides with the Riemann mapping of $D_f(\infty)$ onto the complement of the closed unit disk normalized 
to be tangent to the identity map at infinity.

The \textit{external ray} (or \textit{ray} for short) \textit{of angle}
$\theta$ is defined as 
\[R^{\theta}=R^{ \theta }_{f}:=B_f^{-1} \{re^{i\theta} : r_{f} <r<\infty\}.\]
The \textit{equipotential of level} $r > r_f$ is defined as
\[E^r=E^{r}_{f}:=B_f^{-1} \{re^{i\theta}: 0\leq \theta \leq 2\pi \}.\]
The \textit{equivariance} property of the map $B_f$, i.e.\ $B_{f}(f(z))=(B_{f}(z))^{d}$, 
implies that $f(R^{\theta})=R^{d\theta}$, and \text{$f(E^r)=E^{r^d}.$}

A ray $R^{\theta}$ is called \textit{periodic} of period $p$ if
$f^{p}(R^{\theta})=R^{\theta}$. 
A ray is fixed (has period $1$) if and only if $\theta$ is a rational number of the
form $2\pi j/(d-1)$. By definition, a ray $R^{\theta}$ \textit{lands} at a
well defined point $z$ in $J(f)$, if the limiting value of the ray
$R^{\theta}$ (as $r \ra 1$) exists and is equal to $z$. Such a point $z$ in $J(f)$ is
called the \textit{landing point} of the ray $R^\theta$. The following theorem
characterizes the landing points of the periodic rays. See
\cite{DH1a} for further discussions.

\begin{thm}\label{rays}
Let $f$ be a polynomial of degree $d\geq 2$ with connected Julia
set. Every periodic ray lands at a well defined periodic point which
is either repelling or parabolic. Vice versa,  every repelling or
parabolic periodic point is the landing point of at least one, and at
most finitely many periodic rays with the same ray period.
\end{thm}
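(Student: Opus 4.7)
The theorem comprises two assertions, which I address in sequence.

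\emph{Periodic rays land at repelling or parabolic points.} Let $R=R^\theta$ be periodic of period $p$ and set $g=f^p$, so $g(R)=R$. In B\"ottcher coordinates, parametrize $R$ by $\phi(t)=B_f^{-1}(t\,e^{2\pi i\theta})$ for $t>1$, so that $g(\phi(t))=\phi(t^N)$ with $N=d^p$; the landing direction corresponds to $t\to 1^+$. The landing locus
\[
L(R)=\bigcap_{t_0>1}\overline{\phi((1,t_0])}
\]
is a nonempty, compact, connected subset of $J(f)$ that is forward invariant under $g$. I would first prove $L(R)=\{z_0\}$ is a singleton by showing that the fundamental arcs $\gamma_n=\phi([t_0^{1/N^{n+1}},t_0^{1/N^n}])$, obtained by iterating the inverse branch of $g$ along $R$ (well-defined because $D_f(\infty)$ contains no critical values of $g$), have Euclidean diameters tending to zero. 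The tool is a Schwarz--Pick contraction: produce a hyperbolic domain $\Omega\subset\mathbb{C}$ containing $\gamma_0$ on which a univalent branch of $g^{-1}$ sends $\Omega$ strictly into itself, so that hyperbolic diameters decay at a definite exponential rate; since the $\gamma_n$ remain in a fixed compact set, their Euclidean diameters decay as well. Continuity of $g$ combined with $g\circ\phi(t)=\phi(t^N)$ gives $g(z_0)=z_0$. Since $z_0\in J(f)$ it is not attracting, and the Siegel and Cremer cases are ruled out because $R$ acquires a definite radial direction at every B\"ottcher scale, which is incompatible with spiraling into a non-linearizable indifferent fixed point or accumulating on the boundary of a Siegel disk. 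Hence $z_0$ is repelling or parabolic.

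\emph{Every repelling or parabolic periodic point is a landing point.} Let $z$ be such a point of period $k$, and set $g=f^k$. Finiteness is almost immediate: periodic rays of period $p$ have angles in the finite set $\{j/(d^p-1):0\le j<d^p-1\}$, so only finitely many can exist for any fixed $p$; and the rays landing at $z$ are cyclically permuted by $g$ (they bound sectors at $z$ whose cyclic order is preserved by the local dynamics of $g$), which forces them to share a common ray period. For existence in the repelling case, I would use the K\oe nigs linearization $\psi$ near $z$ conjugating $g$ to $w\mapsto\lambda w$ with $|\lambda|>1$. Since $z\in\partial D_f(\infty)$, pick $w\in D_f(\infty)$ close to $z$; the external ray of $f$ through $w$ is mapped by $g$ to another external ray, and applying the contracting inverse branch of $g^{-1}$ near $z$ produces a sequence of rays whose $\psi$-images converge to a straight half-line through $0$, which corresponds to a periodic external ray landing at $z$. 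The parabolic case is analogous, using Fatou coordinates on a repelling petal based at $z$.

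\emph{Main obstacle.} The technical heart is the diameter decay $\operatorname{diam}(\gamma_n)\to 0$. Constructing the univalent inverse branch of $g$ on a suitable hyperbolic domain requires care when the critical values of $g$ cluster near $R$; the standard workaround is to puncture the plane at a convenient forward-invariant finite set (e.g.\ a repelling cycle of $g$) and, if necessary, to shift $\gamma_0$ slightly along $R$ so that the iterated pullbacks avoid all critical values of $g$, after which Schwarz--Pick applies cleanly and the ray is shown to land.
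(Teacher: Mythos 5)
The paper does not prove Theorem \ref{rays}; it is stated as a classical result with a pointer to Douady--Hubbard, so there is no internal proof to compare against. I will therefore assess your sketch on its own terms.

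The central gap is in the landing argument. You propose to produce a hyperbolic domain $\Omega\supset\gamma_0$ on which ``a univalent branch of $g^{-1}$ sends $\Omega$ strictly into itself,'' and to realize this by puncturing $\mathbb{C}$ at a forward-invariant finite set such as a repelling cycle of $g$. This does not work as stated. Removing a repelling cycle $S$ from $\mathbb{C}$ ensures $g^{-1}(\Omega)\subsetneq\Omega$, but $g:g^{-1}(\Omega)\to\Omega$ is a \emph{branched} covering, because the critical values of $g$ lie in $\Omega$. Consequently no single-valued univalent branch of $g^{-1}$ is defined on all of $\Omega$, and the Schwarz--Pick inequality cannot be applied in the form you need (one has $d_\Omega(g(x),g(y))\le d_{\Omega'}(x,y)$, which does not by itself yield $d_\Omega(x,y)\le d_\Omega(g(x),g(y))$ once the covering is branched). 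Shifting $\gamma_0$ along $R$ is aimed at a non-issue: the pullbacks $\gamma_n$ already lie in $D_f(\infty)$ and automatically avoid the finite critical values; the obstruction concerns the domain $\Omega$, not the arcs. A second problem is the final step ``since the $\gamma_n$ remain in a fixed compact set, their Euclidean diameters decay as well'': the $\gamma_n$ accumulate on $J(f)$, and if the repelling cycle $S$ you removed lies on $J(f)$ (as it must), you cannot assume the $\gamma_n$ stay in a compact subset of $\Omega$ without first knowing something about the accumulation set $L(R)$ --- which is precisely what you are trying to determine.

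Two standard routes repair this. One is to puncture at the \emph{closure of the postcritical set} of $g$, which is forward-invariant \emph{and} contains all critical values, so $g:g^{-1}(\Omega)\to\Omega$ is an unbranched covering and the inclusion $g^{-1}(\Omega)\hookrightarrow\Omega$ is a genuine Schwarz--Pick contraction; this requires verifying hyperbolicity and that $R$ lies in the relevant component. The cleaner and more commonly used argument works directly in $D_f(\infty)$ with its own hyperbolic metric: there $g$ is an \emph{unbranched self-covering}, hence a local isometry, so the arcs $\gamma_n$ all have the \emph{same} hyperbolic length; the decay of Euclidean diameters then comes not from a contraction but from the standard Koebe-type estimate $\rho_{D_f(\infty)}(z)\gtrsim 1/\mathrm{dist}(z,J(f))$ together with the fact that the hyperbolic distance from $\gamma_n$ to a fixed base grows linearly in $n$, which forces $\mathrm{dist}(\gamma_n,J(f))$ (and hence the Euclidean diameters) to decay geometrically, giving a summable tail. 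Your outline conflates these two mechanisms. The ruling out of Cremer points is the snail lemma, which concerns the local linearizing (or Fatou) coordinate at the landing point, not the B\"ottcher coordinate at $\infty$, and should be attributed as such. The converse direction (K\oe nigs pullback, pigeonhole on angles) and the finiteness/equal-period observation are correct in spirit but elide the pigeonhole step guaranteeing that the pulled-back rays stabilize to a single angle.
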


In particular, this theorem implies that the external rays landing
at a periodic point are organized in several cycles. Suppose
$\overline{a}=\{a_{k}\}_{k=0}^{p-1}$ is a repelling or parabolic
cycle of $f$. Let $\mfr(a_{k})$ denote the union of all the external
rays landing at $a_{k}$. The configuration
\[\mfr (\overline {a})=\bigcup _{k=0}^{p-1} \mfr(a_{k}),\]
with the rays labeled by their external angles,  is called the
\textit{periodic point portrait} of $f$ associated to the cycle
$\overline{a}$.

\subsection{Unicritical family and the connectedness locus}
Any degree $d$ polynomial with only one critical point is conformally
conjugate to some $P_{c}(z)=z^{d}+c$, with $c\in \cc$. A case of
special interest is the following fixed point portrait.
The $d-1$ fixed rays $R^{2\pi j/(d-1)}$ land at $d-1$ (distinct) fixed points
called $\beta_{j}$. Moreover, these are the only rays that land at
$\beta_{j}$'s. 
Therefore, these fixed points are \textit{non-dividing}, that is, $K(P_c)\setminus \beta_{j}$ is 
connected for any $j$. 
If the other fixed point called $\alpha$ is also repelling, there are
at least two rays that land at it. 
Thus, the $\alpha$-fixed point is \textit{dividing} and by 
Theorem \ref{rays}, the rays landing at $\alf$ are permuted under the dynamics. The following
statement has been shown in \cite{Mi2} for quadratic polynomials.
The same ideas apply to prove it for degree $d$ unicritical polynomials.

\begin{prop} \label{sectors}
If at least two rays land at the $\alpha$ fixed point of $P_c$,
we have: 
\begin{itemize}
\item[--] the component of $\mathbb{C} \setminus P_c^{-1}(\mfr(\alpha))$ containing
the critical value is a sector bounded by two external rays;
\item[--] the component of $\mathbb{C}\setminus P_c^{-1}(\mfr(\alpha))$ containing the
critical point is a region bounded by $2d$ external rays landing in
pairs at the points $e^{2\pi j/d} \alpha$, for $j=0,1,\ldots,d-1$.
\end{itemize}
\end{prop}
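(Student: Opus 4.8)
The plan is to analyze the preimage configuration $P_c^{-1}(\mfr(\alpha))$ directly from the equivariance of external rays established in Theorem~\ref{rays}. First I would recall that, since $\alpha$ is a dividing fixed point, $\mfr(\alpha)$ consists of $q \geq 2$ rays $R^{\theta_1}, \dots, R^{\theta_q}$ landing at $\alpha$, and these are cyclically permuted by $P_c$ (equivalently, the angles $\theta_i$ are permuted by multiplication by $d$ modulo $2\pi$). Because $\alpha$ is not a critical value, its full preimage under $P_c$ consists of $d$ points, namely $\alpha$ itself together with $\omega^j \alpha$ for $j=1,\dots,d-1$, where $\omega = e^{2\pi i/d}$; here I use that $P_c(z) = z^d + c$ so $P_c(z) = \alpha$ iff $z^d = \alpha - c = \alpha^d$ (using $P_c(\alpha)=\alpha$). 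Thus $P_c^{-1}(\mfr(\alpha))$ is the union of the ray configurations at these $d$ points, and at each of the $d-1$ points $\omega^j\alpha$ the rays are exactly the preimages of the rays in $\mfr(\alpha)$ under the local branch of $P_c^{-1}$ fixing that point's target; since $\alpha \neq 0$ is not the critical point, each of the $q$ rays at $\alpha$ lifts to exactly one ray at each $\omega^j\alpha$, giving $q$ rays landing at each such point.

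Next I would count and locate the rays near the critical point $0$ and the critical value $c$. At the critical value $c = P_c(0)$: the rays landing at $\alpha$ that separate the plane give, upon taking preimages, rays landing at the $d$ points above $\alpha$; the component of $\cc \setminus P_c^{-1}(\mfr(\alpha))$ containing $c$ is bounded by those preimage rays adjacent to it. Since $P_c$ maps this component onto the component of $\cc\setminus\mfr(\alpha)$ containing $c$'s image's... more carefully: $P_c$ restricted to a neighborhood of $0$ is $d$-to-$1$ branched only at $0$, and the component $V_0$ of $\cc\setminus P_c^{-1}(\mfr(\alpha))$ containing $0$ maps onto the component $W$ of $\cc\setminus \mfr(\alpha)$ containing $c$, as a degree $d$ branched cover branched at $0$. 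The region $W$ is a sector at $\alpha$ bounded by two rays of $\mfr(\alpha)$ (the two rays of $\mfr(\alpha)$ that are ``adjacent'' to the critical value — this is the first bullet, which I would prove by noting $W$ is simply connected, contains $c$, and has $P_c(W) = $ some sectors; actually $W$ being bounded by exactly two rays follows because $c$ lies in one of the $q$ sectors cut by $\mfr(\alpha)$, each such sector bounded by two rays). Pulling $W$ back under the degree-$d$ branched cover $z\mapsto z^d$ (in Böttcher-type coordinates near the relevant structure), the two boundary rays of $W$ lift to $2d$ rays, landing in pairs at the $d$ points $\omega^j\alpha$, bounding the region $V_0$; this gives the second bullet.

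The cleanest route for the second bullet is: the two angles bounding $W$ are $\theta^-$ and $\theta^+$ with $d\theta^- , d\theta^+$ both landing at $\alpha$; their $d$-th preimages are $\{(\theta^- + 2\pi k)/d\}$ and $\{(\theta^+ + 2\pi k)/d\}$ for $k=0,\dots,d-1$, a total of $2d$ rays. One checks that consecutive preimage angles pair up $(\theta^-+2\pi k)/d$ with $(\theta^+ + 2\pi k)/d$ to land at a common point among $\{\omega^j\alpha\}$, because both are preimages under $P_c$ of rays landing at $\alpha$ and the local degree at $\omega^j\alpha$ is one; and that the $2d$ rays together bound a single component containing $0$ because $0$ is the unique preimage of $c$'s sector's ``vertex direction'' and $z\mapsto z^d$ maps a neighborhood of $0$ onto a neighborhood of $c$ injectively on each sector. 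The main obstacle I anticipate is the purely combinatorial bookkeeping of \emph{which} preimage rays land \emph{together} — i.e., verifying the pairing into $d$ pairs at the $d$ points $e^{2\pi ij/d}\alpha$ rather than some other grouping — and checking that the resulting $2d$-gon region is connected and contains $0$; this requires using that $\alpha$ and its siblings are non-critical so no extra rays appear, plus a rotational-symmetry argument ($P_c$ commutes with $z\mapsto \omega z$ in a suitable sense on ray angles) to see the configuration is invariant under $z \mapsto \omega z$, which forces the $\mathbb{Z}/d$-symmetric pairing claimed. Everything else — connectedness of the $\beta_j$ complement, the degree count of $P_c^{-1}(\alpha)$, and the landing behavior — follows directly from Theorem~\ref{rays} and the explicit form of $P_c$.
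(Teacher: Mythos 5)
The paper does not actually prove this proposition: it cites Milnor's orbit-portrait paper for $d=2$ and asserts the same ideas extend, so your attempt has to stand on its own. Your argument for the \emph{second} bullet is essentially the standard one and is sound: the sector $W$ of $\cc\setminus\mfr(\alpha)$ containing the critical value is a disk containing $c$, so $P_c^{-1}(W)$ is connected (the component containing $0$ already accounts for the full degree $d$), it is precisely the component $V_0$ of $\cc\setminus P_c^{-1}(\mfr(\alpha))$ containing $0$, and $\partial V_0=P_c^{-1}(\partial W)$ consists of the $2d$ lifts of the two rays bounding $W$. The pairing at the points $\omega^j\alpha$, $\omega=e^{2\pi i/d}$, is forced exactly as you indicate: each $\omega^j\alpha$ is non-critical, so the $q$ rays landing there are in bijection, via the local inverse branch, with the $q$ rays of $\mfr(\alpha)$, and exactly one of them lies over each of the two boundary rays of $W$.

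The gap is in the first bullet. What you actually prove is that the component of $\cc\setminus\mfr(\alpha)$ containing $c$ is a two-ray sector; the proposition is about $\cc\setminus P_c^{-1}(\mfr(\alpha))$, and the preimage contains $(d-1)q$ additional rays landing at the points $\omega^j\alpha$, $j\neq0$, which could a priori enter $W$ and subdivide it. Two facts close this gap and neither appears in your write-up. First, every new ray lies in the sector $S_0$ of $\cc\setminus\mfr(\alpha)$ containing the critical point: each such ray is disjoint from $\mfr(\alpha)$ and lands at some $\omega^j\alpha$, which lies in $\overline{V_0}\subseteq\overline{S_0}$ but not on $\partial S_0$ (it is a Julia set point distinct from $\alpha$), hence in $S_0$. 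Second, and crucially, $W\neq S_0$: the critical point and the critical value must be separated by $\mfr(\alpha)$. This is where $q\geq2$ and the combinatorics at $\alpha$ genuinely enter. The action of $P_c$ on the rays at $\alpha$ preserves their cyclic order, hence is a combinatorial rotation by some $p$, and $p\not\equiv 0\pmod q$ because the fixed rays $R^{2\pi j/(d-1)}$ land at the non-dividing fixed points $\beta_j$, not at $\alpha$; counting the preimages of $c$ (all $d$ of which coincide with $0$) sector by sector then shows that the critical value's sector is the image of the critical point's sector under this nontrivial rotation, hence distinct from it. Without this step the first bullet, as stated, is not established.
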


The \textit{connectedness locus} $\M_d$, or the \textit{Multibrot} set of degree $d$, is defined as the
set of parameters $c$ in $\mathbb{C}$ for which $J(P_{c})$ is
connected. In particular, $\M_2$ is the
well-known  \textit{Mandelbrot} set. See Figures~\ref{mandelpic} and \ref{multi}.
\begin{figure}[ht]
\begin{center}
\includegraphics[scale=1.2]{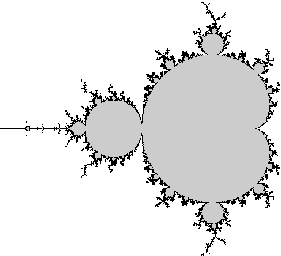}
\caption{The Mandelbrot set. The gray regions show the interior of $\M_2$ and 
         darker points show its boundary.}
\label{mandelpic}
\end{center}
\end{figure}

A well-known result due to Douady and Hubbard \cite{DH1a} shows
that these connectedness loci are connected. Their argument is based on 
considering the explicit conformal isomorphism
\[\B_d:\mathbb{C}\setminus \M_d \rightarrow \{z\in \mathbb{C}:| z| >1 \}\] 
given by $\B_d(c):=B_{c}(c)$, where $B_{c}$ is the
B\"{o}ttcher coordinate~\eqref{Bottcher} of $P_c$.

\begin{figure}[ht]
  \begin{center}
\begin{pspicture}(8,4)
\epsfxsize=2.5cm
\rput(1.1,2){\epsfbox{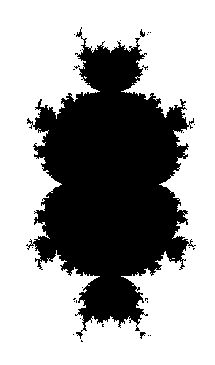}}
\epsfxsize=5cm
\rput(5.5,2){\epsfbox{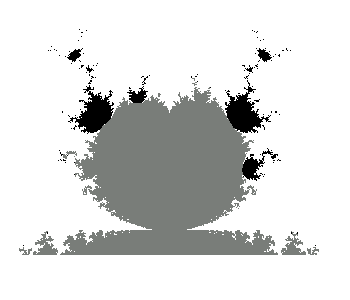}}
\pspolygon(.55,3)(1.8,3)(1.8,4)(.55,4)
\pspolygon(3,.4)(7.8,.4)(7.8,3.8)(3,3.8)
\psline[linestyle=dashed](1.8,3)(3,.4)
\psline[linestyle=dashed](1.8,4)(3,3.8)
\end{pspicture}
 \caption{Figure on the left shows the connectedness locus $\M_3$. The figure
             on the right is an enlargement of a primary limb in $\M_3$. The
             dark regions in the right box show some of the secondary limbs.}
    \label{multi}
  \end{center}
\end{figure}

By means of the conformal isomorphism $\B_d$, \textit{parameter
external rays} $R_{\theta}$ and \textit{equipotentials} $E_r$ are defined as 
$\B_d$-preimages of straight rays going to infinity and round circles around $0$,
respectively.

A polynomial $P_{c}$ (also the corresponding parameter $c$) is called
\textit{hyperbolic} if $P_{c}$ has an attracting periodic point. This attracting 
periodic point necessarily attracts orbit of the finite critical point. The
set of hyperbolic parameters in $\M_d$, which is open by definition, is a union of some
components of $\daron \M_d$. These components are called the \textit{hyperbolic components}.

\textit{The main hyperbolic component} is defined as the set of
parameter values $c$ for which $P_{c}$ has an attracting fixed
point. Outside of the closure of this set all fixed points
become repelling. Consider a parameter $c$ in a hyperbolic component $\hh \subset
\daron \M_d$, and suppose that $\overline {b}_{c}$ denotes the corresponding
attracting cycle with period $k>1$. On the boundary of $\hh$ this cycle
becomes neutral, and there are $d-1$ parameters $c_i\in \partial \hh$ where $P_{c_i}$
has a \textit{parabolic} cycle with multiplier equal to one. 
One of these parameters, which is called the \textit{root} of $\hh$ and is denoted by $c_{root}$, 
divides the connectedness locus into two pieces. 
Indeed, any hyperbolic component has
one root and $d-2$ \textit{co-roots}. The root is the landing point
of two parameter rays, while every co-root is the landing point of a
single parameter ray. See Figure~\ref{julia}. 
For a proof of these statements one may consult \cite{DH1a}, for the quadratic polynomials, 
and \cite{Sch} for arbitrary degree unicritical polynomials. 

If $c$ belongs to a hyperbolic component $\hh$ which is not the main hyperbolic 
component of the connectedness locus, the \textit{basin of attraction} of its attracting cycle $\bar{b}_c$, 
denoted by $A_c$, is defined as the set of points $z\in\mathbb{C}$ with 
$\langle P_c^n(z)\rangle_{n=0}^{\infty}$ converges to the cycle $\overline{b}_c$. 
The boundary of the component of $A_c$ containing $c$ is a Jordan curve which we denote it by $D_c$. 
The map $P^k_{c}$ on $D_{c}$ is topologically conjugate to $\theta \mapsto d \theta$ on the unit
circle. 
Therefore, there are $d-1$ fixed points of $P^k_c$ on this Jordan curve which are
repelling periodic points of $P_c$ of period dividing the period of
$\bar{b}_{c}$ (its period can be strictly less than the period of $\bar{b}_c$).
Among all rays landing at these repelling periodic points, let $\theta
_{1}$ and $\theta_{2}$ be the angles of the external rays bounding
the sector containing the critical value of $P_{c}$ (See Figure~\ref{julia}). The following
theorem makes a connection between external rays $R^{\theta_1}$,
$R^{\theta_2}$ and the parameter external rays
$R_{\theta_1}$, $R_{\theta_2}$. 
See \cite{DH1a} and \cite{Sch} for further details.

\begin{figure}[ht]
\begin{center}
\begin{pspicture}(12.4,8)
  \psset{xunit=1cm}
  \psset{yunit=1cm} 
   \rput(9.5,3.3){\epsfbox{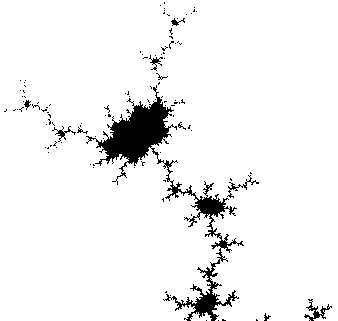}} 
   \epsfxsize=6cm
   \rput(2.6,3){\epsfbox{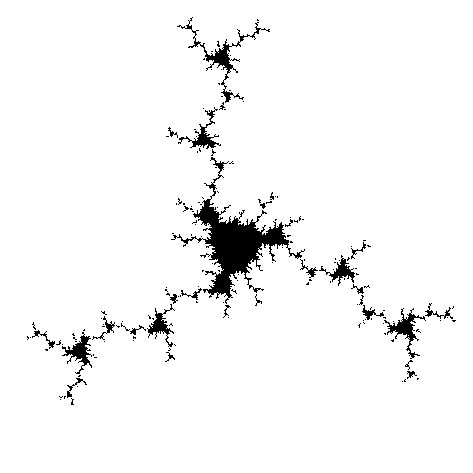}}
   
   \pscurve[linewidth=.5pt,showpoints=false](4.5,5.2)(3.5,5)(2.48,5.19)
   \pscurve[linewidth=.5pt,showpoints=false](.5,4.9)(1.5,4.8)(2.1,4.9)(2.48,5.19) 
   \rput(4.5,5.5){$R^{\theta_1}$}
   \rput(.5,5.2){$R^{\theta_2}$}

   \pscurve[linewidth=.5pt,showpoints=false](7.4,5.8)(8.1,5)(8.82,4)   
   \pscurve[linewidth=.5pt,showpoints=false](7.1,.8)(7.3,1.)(7.8,1.5)(8.8,2.4)(9.15,3)(9.18,3.53)
   \pscurve[linewidth=.5pt,showpoints=false](11.8,4.6)(10.2,3.5)(9.6,3.45)(9.18,3.53)

  \rput(7.2,6.2){\tiny{$R_{\theta_3}$-ray landing at the co-root}}
  \rput(7.3,.2){\tiny{$R_{\theta_2}$-ray landing at the root}}
  \rput(11,5.4){\tiny{$R_{\theta_1}$-ray landing}}
  \rput(11,4.9){\tiny{at the root}}  
\end{pspicture}
\caption{The figure on the left shows a primitively renormalizable
             Julia set, as well as the external rays $R^{\theta_1}$ and
             $R^{\theta_2}$ landing at the corresponding repelling periodic point. 
             The figure on the right is the corresponding primitive little
             Multibrot copy. It also shows the parameter external rays
             $R_{\theta_1}$ and  $R_{\theta_2}$ landing at the root point.}
\label{julia}
\end{center}
\end{figure}

\begin{thm}
The parameter external rays $R_{\theta_1}$ and $R_{\theta _2}$ land
at the root of $\hh$. Moreover, these are the only rays that land at this point.
\end{thm}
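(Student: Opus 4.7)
My plan is to follow the Douady--Hubbard strategy based on the holomorphic motion of external rays together with the dynamics--parameter correspondence, in the form extended to arbitrary degree by Schleicher \cite{Sch}.

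For the \emph{existence} of the landing of $R_{\theta_1}$ and $R_{\theta_2}$ at $c_{root}$, I would first observe that $\theta_1$ and $\theta_2$ are periodic angles under $\theta \mapsto d\theta$ with ray period equal to the period of $\bar b_c$, because by construction $R^{\theta_1}_c$ and $R^{\theta_2}_c$ land at a common repelling periodic point on $D_c$. For $c$ varying inside $\hh$ this landing point moves holomorphically, so the pair $\{R^{\theta_1}_c, R^{\theta_2}_c\}$ persists as a stable dynamical ray portrait. The key tool is the standard principle that a periodic parameter ray $R_\theta$ accumulates at a parameter $c_*$ if and only if the dynamical ray $R^\theta_{c_*}$ lands at a non-repelling periodic point of $P_{c_*}$ whose portrait contains $\theta$. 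Using the B\"ottcher isomorphism $\B_d$ together with the identity $\B_d(c)=B_c(c)$, one checks that for $c$ on the parameter ray $R_{\theta_i}$ the critical value lies on the dynamical ray $R^{\theta_i}_c$. Passing to the limit, any accumulation point $c_*$ of $R_{\theta_i}$ must satisfy: $R^{\theta_i}_{c_*}$ lands at a parabolic periodic point of $P_{c_*}$ carrying the portrait $\{\theta_1,\theta_2\}$. Matching combinatorics on both planes (period, multiplier, number of rays) forces the multiplier to be $1$ and, by Proposition~\ref{sectors}, the parabolic parameter to be the root rather than a co-root. Hence $c_* = c_{root}$, and both rays land there.

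For the \emph{uniqueness} part, suppose some further parameter ray $R_\eta$ also landed at $c_{root}$. Applying the same correspondence in reverse, $R^\eta_{c_{root}}$ would have to land at the parabolic cycle of $P_{c_{root}}$. By Theorem~\ref{rays} the set of rays landing at this cycle is finite and permuted by dynamics with prescribed combinatorics, and by Proposition~\ref{sectors} the two angles $\theta_1, \theta_2$ are exactly the angles in that portrait whose rays bound the sector containing the critical value. Since the ray $R_\eta$ landing at $c_{root}$ must likewise bound the critical value sector in the parameter plane on one side, the combinatorics force $\eta \in \{\theta_1, \theta_2\}$.

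The main obstacle I expect is making the dynamics--parameter correspondence fully rigorous in the hyperbolic (as opposed to Misiurewicz) setting: one must rule out that $R_{\theta_i}$ either lands elsewhere on $\partial \M_d$ or fails to land at all. This is typically handled by a continuity and stability argument along the parameter ray, exploiting that outside $\M_d$ the holomorphic motion of $R^{\theta_i}_c$ pins the critical value to the moving ray via $\B_d$, so the only obstruction to prolonging the motion is a bifurcation of the landing point, which must occur at a parabolic parameter with the right portrait. The argument has been worked out for all $d \geq 2$ in \cite{Sch}, and I would invoke those results rather than reprove them.
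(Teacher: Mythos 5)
The paper itself does not prove this theorem: immediately after stating it, the text reads ``See \cite{DH1a} and \cite{Sch} for further details,'' and the surrounding paragraph already notes that the facts about roots and co-roots are taken from those references. Your proposal is therefore consistent with the paper's treatment, since you likewise end by deferring to Schleicher's results rather than reproving them, and your sketch accurately reflects the standard Douady--Hubbard ray-correspondence strategy (holomorphic motion of the ray portrait in the wake, the identity $\B_d(c)=B_c(c)$ pinning the critical value to a dynamical ray along the parameter ray, and the classification of accumulation points as parabolic parameters carrying the limiting portrait) that underlies the cited proofs.

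One small inaccuracy worth flagging: in the uniqueness part you invoke Proposition~\ref{sectors}, but that proposition concerns the $\alpha$ fixed point of $P_c$ and the first preimage configuration of $\mfr(\alpha)$; it says nothing about the repelling periodic points on $D_c$ of higher period whose angles are $\theta_1$ and $\theta_2$. The relevant combinatorial fact --- that exactly two dynamical rays of period $k$ land at the parabolic orbit point on the boundary of the critical value Fatou component, and that they are the ones bounding the critical value sector --- comes from the ray-portrait analysis at the parabolic parameter in \cite{Sch}, not from Proposition~\ref{sectors}. This does not break the argument, since you are in any case importing the full statement from \cite{Sch}, but the internal citation is misdirected. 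You should also be slightly careful with the claim that the ray period of $\theta_1,\theta_2$ equals the period of $\bar b_c$: the period of the orbit point itself may be a proper divisor of $k$, and it is the ray period (the period under $\theta\mapsto d\theta$ of the angles) that is $k$; the paper's own remark about $D_c$ makes this distinction.
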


The closure of $R_{\theta _{1}}$ and $R_{\theta _{ 2}}$
cut the parameter plane into two components. The one containing $\hh$ with the root point 
attached to it is called the \textit{wake} $W_{\hh}$. So a wake is an open set with a point
attached to its boundary. 
Given a wake $W_\hh$ and an equipotential $E_\eta$, the \textit{truncated wake}
$W_\hh(\eta)$ is the bounded component of $W_\hh \setminus
E_\eta$. The part of the connectedness locus contained in $W_\hh$
is called the \textit{limb} $\LL_\hh$ of the connectedness locus \textit{originating} at $\hh$. 
In other words, $\LL_\hh=W_\hh\cap \M_d$. By definition, every limb is a closed set.

The wakes attached to the main hyperbolic component of $\M_d$ are called \textit{primary
wakes}. A limb associated to such a primary wake is called a 
\textit{primary limb}. If $\hh$ is a hyperbolic component attached to
the main hyperbolic component, all the wakes attached to $\hh$ (except $W_\hh$ itself)
are called \textit{secondary wakes}. Similarly, the limb associated
to a secondary wake is called a \textit{secondary limb}. A
\textit{truncated limb} is obtained from a limb by removing a
neighborhood of its root. Some secondary limbs are shown in Figure~\ref{multi}.

Given a parameter $c$ in a hyperbolic component $\hh$, we have the attracting cycle $\overline {b}_{c}$
as above, and the associated repelling cycle $\overline{a}_{c}$ that 
is the landing point of the external rays $R^{\theta_1}$ and
$R^{\theta _2}$. The following result gives the dynamical meaning of
the parameter values in the wake $W_\hh$ bounded by parameter external rays
$R_{\theta _{1}}$ and $R_{\theta _{ 2}}$ (See \cite{Sch} for further details).

\begin{thm}
For parameters $c$ in $W_\hh \setminus $\{root\}, the cycle $\overline{a}_c$ stays repelling 
and, moreover, the isotopy type of the ray portrait $\mfr (\overline{a}_{c})$ is fixed throughout $W_\hh$.
\end{thm}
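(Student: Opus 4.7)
\emph{Plan.} I would prove both assertions simultaneously by a connectedness/stability argument in $W_\hh \setminus \{root\}$. For $c$ inside the hyperbolic component $\hh$ itself the cycle $\overline{a}_c$ is repelling, and since its points are simple roots of $P_c^n(z)-z$ for the appropriate period $n$, the implicit function theorem gives a holomorphic continuation in $c$ as long as the multiplier $\lambda(c)$ stays off $1$. This continuation persists through every point of $\partial\hh\cap W_\hh$ different from $c_{root}$; at $c_{root}$ the standard parabolic bifurcation picture splits the parabolic cycle of $P_{c_{root}}$ into two distinct repelling cycles on the $W_\hh$ side, and I would identify the continuation of $\overline{a}_c$ with one of them.

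With this continuation in hand, I would invoke the classical Douady--Hubbard stability theorem for external rays: if a rational-angle ray $R^\theta$ lands at a repelling periodic point $p(c_0)$ of $P_{c_0}$, then $R^\theta$ lands at the natural holomorphic continuation $p(c)$ for all $c$ in a neighborhood of $c_0$. Applied to every ray of the initial portrait of $\overline{a}_c$ in $\hh$, this makes the set
\[
S = \{\, c \in W_\hh \setminus \{root\} : \overline{a}_c \text{ is repelling and } \mfr(\overline{a}_c) \text{ is isotopic to its initial portrait} \,\}
\]
open. For closedness in $W_\hh \setminus \{root\}$, take $c_n \to c_0 \in W_\hh \setminus \{root\}$ with $c_n \in S$; the limiting cycle has multiplier $\lambda(c_0)$ obtained as a holomorphic limit, so closedness can only fail if $|\lambda(c_0)|=1$. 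Such a $c_0$ would then be the root of a hyperbolic component carrying the same ray-portrait combinatorics as $\hh$, which by the preceding theorem forces the parameter rays $R_{\theta_1}$ and $R_{\theta_2}$ to co-land at $c_0$ as well as at $c_{root}$, contradicting the uniqueness of landing of parameter rays. Hence $\overline{a}_{c_0}$ remains repelling, the portrait survives in the limit by continuity of landing, and $c_0 \in S$. Since $S$ is nonempty and $W_\hh \setminus \{root\}$ is connected, $S = W_\hh \setminus \{root\}$. Extension of the isotopy type to $c = c_{root}$ follows from Theorem \ref{rays}, since periodic rays land at parabolic cycles as well.

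The main difficulty is the closedness step: ruling out a second parabolic bifurcation of $\overline{a}_c$ somewhere inside the wake. This is precisely where the global parameter-plane characterization of $W_\hh$ in terms of the pair $(R_{\theta_1}, R_{\theta_2})$ enters essentially, since a purely local continuation argument cannot by itself prevent another root parameter with the same combinatorial type from appearing in the interior of the wake.
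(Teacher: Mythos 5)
The paper does not prove this theorem; it cites Schleicher (\cite{Sch}) for the argument, so there is no in-text proof to compare against. Your open-closed outline does follow the standard line of attack found in the literature (Milnor's orbit-portrait paper and Schleicher's thesis): holomorphic continuation of the cycle, Douady--Hubbard ray stability for openness, and a parameter-plane uniqueness argument to pin down the unique parabolic parameter at the root. The openness half of your argument is sound.

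The closedness half, however, has a real gap, and it sits exactly at the point you flag as ``the main difficulty.'' Suppose $c_n \to c_0 \in W_\hh \setminus \{\mathrm{root}\}$ with $c_n \in S$. You correctly reduce the problem to $|\lambda(c_0)|=1$, but from there you assert that ``such a $c_0$ would then be the root of a hyperbolic component carrying the same ray-portrait combinatorics as $\hh$.'' Two things are being assumed here without justification. First, $|\lambda(c_0)|=1$ only puts $c_0$ on $\partial \hh'$, where $\hh'$ is the component on which the continuation of $\overline{a}_c$ is attracting; a priori $\lambda(c_0)$ could be $e^{2\pi i\alpha}$ with $\alpha$ irrational (a Siegel or Cremer boundary point), in which case $c_0$ is not a root of anything. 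To exclude this you would need to show that the periodic rays $R^{\theta_1}, R^{\theta_2}$ still co-land on $\overline{a}_{c_0}$ in the limit, so that Theorem~\ref{rays} forces $\lambda(c_0)$ to be a root of unity; but persistence of the landing point across a degenerate limit is precisely the type of statement that can fail at parabolics and must be argued, not asserted. Second, even when $\overline{a}_{c_0}$ is parabolic, the orbit portrait at $c_0$ typically \emph{strictly contains} the portrait at nearby $c_n$ (additional rays co-land in the degenerate limit), so the characteristic angle pair that determines which parameter rays land at $c_0$ need not be $(\theta_1,\theta_2)$. Without identifying the characteristic pair of $c_0$ with $(\theta_1,\theta_2)$, the intended contradiction with uniqueness of parameter-ray landing does not fire. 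These two sub-claims --- root-of-unity multiplier at the limit, and persistence of the characteristic arc --- are the actual content of the closedness step and need independent proofs (they are the Goldberg--Milnor orbit-separation lemma and the stability of the characteristic arc in Milnor's treatment); as written your argument uses their conclusions without supplying them.
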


\subsection{Polynomial-like mappings}
A polynomial-like map is a holomorphic proper branched covering of degree $d$, $f:U'\rightarrow U$, where $U$ and $U'$ are simply connected domains with $U'$ compactly contained in $U$. For example, every polynomial can be viewed as a polynomial-like mapping once restricted to an appropriate neighborhood of its filled Julia set. This notion was introduced in \cite{DH2} to explain the presence of homeomorphic copies of the Mandelbrot set
within the Mandelbrot set.

The filled Julia set $K(f)$ of a polynomial-like mapping $f:U'\to U$ is naturally defined as
\[K(f):=\{ z\in\mathbb{C} : f^{n}(z) \in U', \text{ for } n=0, 1, 2, \ldots\}.\]
The Julia set $J(f)$ is defined as the boundary of $K(f)$. These sets are
connected if and only if $K(f)$ contains all critical points of $f$. 

Two polynomial-like mappings $f$ and $g$ with Julia sets $J(f)$ and $J(g)$, respectively, are called 
\textit{topologically} \textit{conjugate} if there are choices of domains $U$, $U'$, $V$, and $V'$ as 
well as a homeomorphism $h:U \ra V$ such that $f:U'\rightarrow U$ and $g:V'\rightarrow V$ are 
polynomial-like, with the same Julia sets $J(f)$ and $J(g)$, and $h\circ f= g\circ h$ on $U'$.
They are called \textit{quasi-conformally} (\textit{conformally}, or \textit{affinely}) conjugate if $h$ can be 
chosen quasi-conformal (conformal, or affine, respectively). The notation $\Dil (h)$ is used for the 
quasi-conformal dilatation of a given quasi-conformal mapping $h$. 

Two polynomial-like mappings $f$ and $g$ are \textit{hybrid or internally equivalent} if there exists a 
quasi-conformal conjugacy $h$ (q.c.\ conjugacy for short) between $f$ and $g$ such that
$\overline{\partial}h=0$ on $K(f)$. The following remarkable rigidity type theorem due to
Douady and Hubbard \cite{DH2} states that the dynamics of a polynomial-like mapping is essentially the same as the one of a polynomial.

\begin{thm}[Straightening]
Every polynomial-like mapping $f$ is hybrid equivalent to (a suitable
restriction of) a polynomial $P$ of the same degree. Moreover, $P$
is unique up to affine conjugacy when $K(f)$ is connected.
\end{thm}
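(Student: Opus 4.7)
\smallskip

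\noindent\textbf{Proof proposal.} The plan is the standard quasiconformal surgery argument. I begin by replacing $f:U'\to U$ with a modified map on the whole Riemann sphere that is holomorphic near $K(f)$ and near $\infty$, and merely quasiregular on a bounded annular transition region, and then apply the Measurable Riemann Mapping Theorem to straighten a suitable invariant Beltrami differential.

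First, after shrinking $U$ slightly, I may assume $\partial U$ and $\partial U'$ are smooth Jordan curves, so $A=U\setminus \overline{U'}$ is a smooth closed annulus and $f:\partial U'\to\partial U$ is a smooth unramified covering of degree $d$. On the complement of $U$ in $\widehat{\mathbb{C}}$, I want the map to look like $z\mapsto z^d$ near $\infty$. Concretely, I choose a smooth conformal identification of a neighborhood of $\widehat{\mathbb{C}}\setminus U$ with a neighborhood of the closed disk $\{|w|\le 1\}$ so that $\partial U$ corresponds to the unit circle, and I pull back the map $w\mapsto w^d$. The remaining task is to define the map on $A$ so that it interpolates smoothly between the two prescribed boundary behaviors and is a quasiregular unramified $d$-fold cover of an annular region. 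Using the fact that both boundary maps are smooth $d$-fold covers of their target circles and are homotopic through such covers, a standard interpolation on $A$ produces a smooth quasiregular extension $F:\widehat{\mathbb{C}}\to\widehat{\mathbb{C}}$ of degree $d$, holomorphic on $U'$ and near $\infty$, with bounded dilatation supported on $A$ and its forward iterated preimages.

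Next, I define an $F$-invariant Beltrami differential $\mu$ on $\widehat{\mathbb{C}}$ by setting $\mu=0$ on $K(f)$ and on the basin of $\infty$ of $F$, setting $\mu=\mu_F:=\overline{\partial}F/\partial F$ on $A$, and spreading $\mu$ by pullbacks along $F$ to the preimages of $A$. Because each point visits $A$ at most once before escaping to $\infty$ (the inverse branches of $F$ taking $A$ back are contracting into the basin of $\infty$ of the model $z\mapsto z^d$ piece), the resulting $\mu$ has $\|\mu\|_\infty=\|\mu_F\|_\infty<1$, and by construction $F^*\mu=\mu$. The Measurable Riemann Mapping Theorem produces a quasiconformal homeomorphism $\phi:\widehat{\mathbb{C}}\to\widehat{\mathbb{C}}$ with $\overline{\partial}\phi/\partial\phi=\mu$, normalized to fix $\infty$. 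Then $P:=\phi\circ F\circ\phi^{-1}$ is a holomorphic branched self-cover of $\widehat{\mathbb{C}}$ of degree $d$ fixing $\infty$, i.e.\ a polynomial of degree $d$. Since $\mu\equiv 0$ on $K(f)$, the map $\phi$ is conformal on $K(f)$, so $\phi|_{U'}:f\to P$ is a hybrid conjugacy onto a suitable restriction of $P$.

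For uniqueness when $K(f)$ is connected, suppose $P$ and $\tilde{P}$ are two polynomials of degree $d$ hybrid equivalent to $f$. Composing gives a quasiconformal map $h:\widehat{\mathbb{C}}\to\widehat{\mathbb{C}}$ defined in a neighborhood of $K(P)$ that conjugates $P$ to $\tilde{P}$ and with $\overline{\partial}h=0$ on $K(P)$. Outside the respective filled Julia sets I extend $h$ by the identification coming from the B\"ottcher coordinates of $P$ and $\tilde{P}$ (which exists because both Julia sets are connected), so $h$ becomes a global quasiconformal conjugacy between $P$ and $\tilde{P}$ that is conformal on both the exterior and on $K(P)$. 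Since $K(P)$ has zero interior-of-complement contribution to the dilatation and $\overline{\partial}h=0$ almost everywhere on $\widehat{\mathbb{C}}$, Weyl's lemma implies $h$ is a M\"obius transformation fixing $\infty$, i.e.\ affine, proving $P$ and $\tilde{P}$ are affinely conjugate.

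The main obstacle is the quasiregular interpolation step: one must verify that on the annulus $A$ a smooth degree-$d$ quasiregular extension exists with \emph{no} extra critical points, which requires carefully using that the two boundary covering maps are isotopic through $d$-fold covers. Everything afterwards — solving the Beltrami equation, identifying the straightened map as a polynomial, and deducing affine rigidity in the connected case — is essentially formal once the surgery is set up and $\mu$ is shown to have bounded dilatation.
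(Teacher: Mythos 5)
The paper does not prove the Straightening Theorem; it simply cites Douady--Hubbard \cite{DH2}, so there is no ``paper's own proof'' to compare against. Your sketch is the standard Douady--Hubbard quasiconformal surgery argument, and in outline it is correct: interpolate on the fundamental annulus $A$ to build a quasiregular degree-$d$ extension $F$ of $f$ that is holomorphic on $U'$ and conjugate to $z\mapsto z^d$ outside $U$, spread the Beltrami form of $F$ by dynamics, solve by the Measurable Riemann Mapping Theorem, and observe that the straightening is conformal on $K(f)$.

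Two points deserve sharpening. First, the reason $\mu$ has $\|\mu\|_\infty<1$ is not that ``the inverse branches of $F$ taking $A$ back are contracting''; rather, $F$ maps $A$ into $\widehat{\mathbb{C}}\setminus U$, which is $F$-forward-invariant and is precisely the region where $F$ is holomorphic. Hence the forward orbit of any point meets $A$ at most once, and only one pullback of the dilatation of $F$ ever contributes to $\mu$ at a given point. Second, and more seriously, your uniqueness argument hand-waves the gluing: you produce a quasiconformal conjugacy $h$ on a neighborhood of $K(P)$ that is conformal on $K(P)$, and separately a conformal conjugacy on the exterior of the filled Julia sets via B\"ottcher coordinates, but you do not explain why these two maps can be made to agree (even up to a $(d-1)$-th root of unity rotation) on the boundary $J(P)$, nor why the resulting piecewise-defined map is globally quasiconformal. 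This is nontrivial -- $J(P)$ need not be a quasicircle, or even locally connected -- and in \cite{DH2} it is handled by a careful Beltrami-form argument (their Proposition~$6$, which the paper also cites at the analogous point in its Remark after the theorem). The rest of the uniqueness argument (Weyl's lemma, the M\"obius transformation fixing $\infty$) is fine once that gluing is established.
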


From now on we only consider polynomial-like mappings with only one branched 
point of degree $d$, assumed to be at zero by normalization, and refer to them as 
\textit{unicritical polynomial-like} mappings. By the above theorem, any unicritical polynomial-like 
mapping with connected
Julia set corresponds to a unique (up to affine conjugacy)
unicritical polynomial $z \mapsto z^{d}+c$, with $c$ in $\M_d$. 
Note that $z^d+c$ and $z^d+ c/\lambda$ are conjugate via $z \mapsto \lambda z$ for every $(d-1)$-th root 
of unity $\lambda$.

Given a polynomial-like mapping $f:U'\rightarrow U$, we can consider
the \textit{fundamental annulus} $A=U\setminus U'$. 
It is not canonical because any choice of $V'\Subset V$ such that
$f:V'\rightarrow V$ is a polynomial-like mapping with the same Julia set
gives a different annulus. However, we can associate a real number,
the \textit{modulus} of $f$, to any polynomial-like mapping $f$ as follows:
\[\mathrm{mod} (f)=\sup \mathrm{mod} (A),\] 
where the supremum is taken over all possible
fundamental annuli $A$ of $f$.

It is easy to see that the hybrid conjugacy obtained in the straightening theorem is 
not unique. However, given a polynomial-like mapping, one can build a hybrid conjugacy as in the 
straightening Theorem with a uniform bound on its dilatation in terms of the modulus of the polynomial-like mapping.
As this is essential in the rest of this work, we formulate it in the following proposition.
\begin{prop}
For every $\eta > 0$, there exists a constant $K>0$, such that if $f$ is a polynomial-like mapping with 
$mod(f)\geq \mu$, then one can choose a hybrid conjugacy as in the straightening theorem whose dilatation is 
bounded by $K$.
\end{prop}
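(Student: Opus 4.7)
The plan is to revisit the Douady-Hubbard straightening construction and track the dilatation through every step, showing it depends only on $\mu$. Given $f:U'\to U$ with $\mathrm{mod}(f)\ge\mu$, first I would shrink $U$ slightly so that $\partial U$ becomes a real-analytic Jordan curve while keeping a fundamental annulus $A=U\setminus\overline{U'}$ of modulus at least $\mu/2$. Next, I would fix a round annulus $B=\{r_0<|z|<r_0^d\}$ outside of $U$ on which $z\mapsto z^d$ is defined, and construct an auxiliary quasi-regular map $F:\mathbb{C}\to\mathbb{C}$ that coincides with $f$ on $U'$, coincides with $z\mapsto z^d$ on $\{|z|>r_0^d\}$, and interpolates between them on a gluing annulus that conformally models $A$.

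The key quantitative ingredient is the following interpolation statement: any two topological annuli of modulus at least $m$ admit a quasi-conformal diffeomorphism between them, matching prescribed real-analytic boundary parametrizations compatible with the $d$-to-$1$ dynamics on each boundary circle, whose dilatation is bounded by a constant $K_0=K_0(m)$ depending only on $m$. Applying this to $A$, with the boundary correspondence induced by $f$ on $\partial U'$ and by $z\mapsto z^d$ on $\partial U$, yields an interpolating quasi-conformal map of dilatation at most $K_0(\mu/2)$, hence a quasi-regular extension $F$ whose complex dilatation $\sigma_0$ is supported in $A$ and has $L^\infty$-norm at most $(K_0-1)/(K_0+1)<1$.

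Then I would define an $F$-invariant Beltrami coefficient $\sigma$ on $\mathbb{C}$ by setting $\sigma=0$ on the filled Julia set $K(f)$ and on the region where $F$ is already holomorphic, and by pulling back $\sigma_0$ along forward orbits of $F$ elsewhere. Every point outside $K(f)$ eventually escapes to the region where $F=z^d$, and the pullback is conformal away from the (essentially) single visit to $A$, so $\|\sigma\|_\infty=\|\sigma_0\|_\infty$. By the Measurable Riemann Mapping Theorem, there is a quasi-conformal $h:\mathbb{C}\to\mathbb{C}$ with $h^{*}\sigma_{\mathrm{std}}=\sigma$; then $P:=h\circ F\circ h^{-1}$ has trivial complex dilatation and is a polynomial of degree $d$, while $h|_{K(f)}$ is holomorphic (since $\sigma=0$ there), giving a hybrid conjugacy. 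By construction $\Dil(h)$ is controlled by $\|\sigma\|_\infty$, and hence bounded by a constant $K:=K(\mu)$ depending only on $\mu$.

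The main obstacle is the interpolation statement, uniform over the possible shapes of $U$ and $U'$. I would handle it by conformally uniformizing $A$ to a round annulus of the same modulus $\ge\mu/2$, reducing the problem to gluing two round annuli along a common boundary circle via a lift of $\theta\mapsto d\theta$; the resulting gluing admits an explicit quasi-conformal extension whose dilatation depends only on the moduli. The real-analyticity of the boundary curves, arranged in the first step, is what allows this pullback to have a smooth, quantitatively controlled extension. Once this lemma is in place, the remainder is the standard Douady-Hubbard argument with the dilatation estimate threaded through unchanged.
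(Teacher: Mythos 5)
The paper does not actually prove this proposition; it is stated without proof as a well-known quantitative refinement of the Douady--Hubbard straightening theorem, so there is nothing in the text to compare against. Your overall architecture is the standard one and is correct in outline: construct a quasi-regular extension $F$ of $f$ agreeing with $z\mapsto z^d$ near infinity, pull back the standard Beltrami coefficient under the dynamics (correctly noting that $\|\sigma\|_\infty=\|\sigma_0\|_\infty$ because each escaping orbit crosses the gluing annulus essentially once), solve the Beltrami equation, and read off the dilatation bound from $\|\sigma\|_\infty$.

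However, the ``key quantitative ingredient'' you isolate is false as stated, and the justification you offer for it skips exactly the step where the real content lies. Two annuli of modulus $\ge m$ do \emph{not} in general admit a quasi-conformal interpolation of dilatation $K_0(m)$ matching arbitrary prescribed real-analytic boundary parametrizations: the dilatation of any extension is bounded below in terms of the quasisymmetric distortion of the induced boundary correspondence, which is not controlled by the moduli alone (real-analytic circle homeomorphisms may have arbitrarily large quasisymmetric distortion). In the situation at hand, after uniformizing the fundamental annulus $A=U\setminus\overline{U'}$ by a conformal map $\Phi$ onto a round annulus $\{1<|z|<R\}$, the boundary map you must glue to $z\mapsto z^d$ is $g:=\Phi\circ f\circ\Phi^{-1}$ from $\{|z|=1\}$ to $\{|z|=R\}$. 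This is a general analytic degree-$d$ covering, \emph{not} a lift of $\theta\mapsto d\theta$, so your proposed reduction to ``gluing round annuli via a lift of $\theta\mapsto d\theta$'' does not apply. What must actually be shown is that a lift of $g$ is quasisymmetric with constant depending only on $\mu$; this is the nontrivial point where the modulus hypothesis enters, and it comes from the fact that $f$ extends holomorphically across $\partial U'$ to the preceding annulus $f^{-1}(A)$, of modulus $\ge \mu/d$, so that uniformizing $U\setminus\overline{f^{-1}(U')}$ and invoking Koebe distortion yields uniform control on $g$ in terms of $\mu$ alone. Without this quasisymmetry estimate your proof has a genuine gap at precisely the step that separates the quantitative statement from the qualitative one; with it, a Beurling--Ahlfors type interpolation and your pullback argument do give the claimed bound.
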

\section{Modified principal nest}
\subsection{Yoccoz puzzle pieces}\label{puzzles}
Recall that for a parameter $c\in \M_d$ outside of the main hyperbolic component, $P_{c}$ has a unique 
dividing fixed point $\alpha_{c}$. 
The $q\geq 2$ external rays landing at $\alpha_c$ together with an arbitrary equipotential $E^r$ cut the domain inside $E^r$ into $q$ closed \textit{topological disks} (i.e.\ simply connected domains in $\mathbb{C}$) $Y^{0}_{j}, j=0, 1,\ldots, q-1$, called \textit{puzzle pieces of level zero}. That is, $Y^0_j$'s are the closures of the bounded components of 
$\mathbb{C}\setminus \{E^r\cup \overline{\mfr(\alpha_c)}\}$. The main property of this partition is 
that each $P_{c}(\partial Y_{j}^{0})$ does not intersect the interior of any piece $Y_{i}^{0}$.

\textit{Puzzle pieces $Y_i^n$ of level or depth $n$} are defined as the closures of the connected components 
of $P_c^{-n}(\daron (Y^{0}_{j}))$. They partition the region bounded by the equipotential $P_c^{-n}(E^r)$ into a 
finite number of closed disks. By definition, all puzzle pieces are bounded by piecewise analytic curves. 
The puzzle piece of level $n$ containing the critical point is referred to as the \textit{critical puzzle piece} 
of level $n$.  
The \textit{label} of a puzzle piece is the set of the angles of the external rays bounding that puzzle
piece. 
If the critical point does not land on $\alpha_c$, there is a unique critical puzzle piece 
$Y^n_0$ of every level $n$.

The family of all puzzle pieces of $P_c$ has the following \textit{Markov property}:

\begin{itemize}
\item[--] Any two puzzle pieces are either disjoint or nested. 
In the latter case, the puzzle piece of higher level is contained in the puzzle piece of lower level.
\item[--] Image of any puzzle piece of level $n \geqslant 1$ is a puzzle piece of level $n-1$. 
Moreover, $P_c:Y^{n}_{j} \rightarrow Y^{n-1}_{k}$ is either $d$-to-$1$ branched covering, or univalent. 
This depends on whether $Y^{n}_{j}$ contains the critical point or not.
\end{itemize}

On the first level, there are $d(q-1)+1$ puzzle pieces organized as follows. 
The critical piece $Y^{1}_{0}$; the $q-1$ (off critical) pieces attached to the fixed point $\alpha _{c}$ that are denoted by $Y^{1}_{i}$, for $i=1, 2, \dots, q-1$; and the symmetric ones attached to $P_c^{-1}(\alpha_c)\setminus \{\alpha_c\}$ that are denoted by $Z^{1}_{i}$, for $i=1, 2, \dots, (d-1)(q-1)$. Moreover, $P_c|Y^{1}_{0}$,  $d$-to-$1$ covers $Y^{1}_{1}$, $P_c|Y^{1}_{i}$ univalently covers $Y^{1}_{i+1}$, for every $i=1,\ldots, q-2$, and $P_c|Y^{1}_{q-1}$ univalently covers $Y^{1}_{0}\cup \bigcup_{i=1}^{(d-1)(q-1)} Z^{1}_{i}$. 
Thus, $P_c^{q}(Y_{0}^{1})$ truncated by $P_c^{-1}(E^r)$ is equal to the union of $Y_{0}^{1}$ and ${Z^{1}_{i}}$'s.

From now on we assume that $P_c^n(0)\neq \alpha_c$, for all $n$. 
Therefore, the critical puzzle piece of every level is uniquely determined. 
As it will be apparent in a moment, this condition is always the case for the parameters we are interested in.
\subsection{Favorite nest and renormalization}
\label{favnest}
Given a puzzle piece $V$ containing $0$, let $R_{V}:\Dom R_{V}\subseteq V\rightarrow V$ denote 
\textit{the first return map} to $V$. It is defined at every point $z$ in $V$ for which 
there exists a positive integer $t$ with $P_c^t(z) \in \daron V$. 
For every such $z$, $R_{V}(z)$ is 
defined as $P_c^t (z)$, where $t$ is the smallest positive integer with $P_c^t(z) \in \daron V$.  
The Markov property of the puzzle pieces implies that any component of $\Dom R_{V}$ is contained in $V$, 
and moreover, the restriction of this return map ($P_{c}^{t}$, for some $t$) to such a component 
is either a $d$-to-$1$, or $1$-to-$1$ proper map onto $V$. 
The component of $\Dom R_V$ containing the critical point  is called 
the \textit{central component} of $R_{V}$. If image of the critical point under the first 
return map belongs to the central component, the return is called \textit{central}.

The \textit{first landing map} $L_{V}$ to a puzzle piece $V\ni0$ is defined at all points $z\in \mathbb{C}$ for which
 there exists an integer $t\geq 0$ with $P_c^t(z)\in \daron V$. 
It is the identity on $V$, and it univalently maps each component of $\Dom L_{V}$ onto $V$.

Consider a puzzle piece $Q\ni0$. If the orbit of the critical point returns back to $Q$ under iterates of $P_c$, 
the central component $P\subset Q$ of $R_{Q}$ is the pullback of $Q$ by $P_{c}^{m}$ along the orbit 
$0,P_c(0), \dots, P_c^m(0)$, where $m$ is the first moment when the critical orbit enters $\daron Q$. 
Hence, $P_{c}^{m}:P\rightarrow Q$ is a proper map of degree $d$. This puzzle piece $P$ is called the 
\textit{first child} of $Q$.

The \textit{favorite child} $Q'$ of $Q$ is constructed as follows: Let $p>0$ be the first moment when 
$R^{p}_{Q}(0) \in \daron (Q\setminus P)$ (if it exists). 
Now, let $q>0$ be the first moment (if it exists) when $R^{p+q}_{Q}(0)\in \daron P$. 
In other words, $p+q$ is the moment of the first return back to $P$ after 
the first escape of the critical point from $P$ under iterates of $R_{Q}$. 
Now, $Q'$ is defined as the pullback of $Q$ under $R^{p+q}_{Q}$ containing the critical point. 
The Markov property of the puzzle pieces implies that the map $R_{Q}^{p+q}=P^{k}_{c}:Q'\ra Q$ 
(for an appropriate $k>0$) is a proper map of degree $d$. 
The main property of the favorite 
child is that the image of the critical point under $P^{k}_{c}:Q'\rightarrow Q$ belongs to the first child $P$.

Let $P_c$ be a unicritical polynomial with $q>1$ external rays landing at its $\alpha$-fixed point, and form 
the corresponding Yoccoz puzzle pieces introduced in Section \ref{puzzles}. 
The map $P_c$ is called \textit{satellite renormalizable}, (also called \textit{immediately 
renormalizable} by Douady and Hubbard) if 
\[P_c^{lq}(0)\in Y^{1}_0,\quad\textrm{for} \;l=0, 1,  2,  \ldots .\]
The map $P_c^q:Y^1_0 \rightarrow P_c^q(Y^1_0)$ is a proper branched covering of \text{degree $d$}. 
However, its domain is not compactly contained in its range. One can slightly enlarge \label{thickening} $Y^1_0$ 
so that it is compactly contained in its range (see \cite{Mi3} for a detailed argument). 
Thus, $P_c^q$ can be turned into a unicritical polynomial-like mapping. Note that the above condition 
on the orbit of the critical point implies that the corresponding little Julia set is connected.

If $P_c$ is not satellite renormalizable, then there is the first positive integer $k$ such that $P_c^{kq}(0)$ 
belongs to some $Z^{1}_{i}$. Define $Q^{1}$ as the pullback of this $Z^{1}_{i}$ under $P_c^{kq}$ containing 
the critical point. 
By the above process we form the first child $P^{1}$ and the favorite child $Q^{2}$ of $Q^{1}$. 
Repeating the above process we obtain a (finite or infinite) nest of puzzle pieces
\begin{align}
Q^{1} \supset P^{1} \supset Q^{2} \supset P^{2} \supset\dots \supset
Q^{n} \supset P^{n} \supset\cdots 
\label{nest}
\end{align}
where $P^{i}$ is the first child of $Q^{i}$, and $Q^{i+1}$ is the favorite child of $Q^{i}$. 

The above nest is finite if and only if one of the following happens:
\begin{itemize}
\item[--] The map $P_c$ is combinatorially non-recurrent,  that is, the critical point does not return to 
some critical puzzle piece.
\item[--] The orbit of the critical point does not escape some $P^{n}$ under iterates of $R_{Q^{n}}$, 
or equivalently, the first return maps to all the critical puzzle pieces of level bigger than some $n$ are central.
\end{itemize}

Combinatorial rigidity of the combinatorially non-recurrent parameters has been taken care of in \cite{Mi3}. 
In the latter case, $R_{Q^n}=P^k_c: P^n \rightarrow Q^n$ (for an appropriate $k$) is a unicritical 
polynomial-like mapping of degree $d$ with $P^n$ compactly contained in $Q^n$. 
The map $P$ is called \textit{primitively renormalizable} in this case. 
Note that the corresponding little Julia set is connected because all the returns
of the critical point to $Q^n$ are central by definition.

A unicritical polynomial is called \textit{renormalizable} if it is satellite or primitively
renormalizable.

\subsection{Complex bounds and pseudo-conjugacies}\label{complex-bound}
The general strategy, starting with Yoccoz's work on quadratics \cite{H}, to prove the local connectivity of 
some  Julia sets and rigidity of complex unicritical polynomials has been showing that every nest of puzzle 
pieces shrink to a point. 
To deal with non-renormalizable and combinatorially recurrent polynomials, the following 
\textit{a priori} bounds property has been proved in \cite{AKLS}.

\begin{thm}\label{priori bounds}
There exists a constant $\delta >0$ such that for every $\eps >0$ there exists $n_0>0$, with the following property. In the nest of puzzle pieces \eqref{nest}, if\, $\mod(Q^{1}\setminus P^{1})> \eps$, then  for all $n\geq n_0$ we have $\mod(Q^{n} \setminus P^{n})>\delta$.
\end{thm}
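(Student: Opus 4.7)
The plan is to follow the Avila--Kahn--Lyubich--Shen strategy, whose heart is the Kahn--Lyubich covering lemma combined with the \emph{bounded degree} property of the modified principal nest. The modified nest is engineered so that, unlike the classical principal nest, the composed transfer maps between consecutive levels have degree bounded in terms of $d$ only; this is what lets us replace quantitative moduli growth (which fails for $d>2$) by a \emph{qualitative} stabilization argument.

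First I would verify the degree bound. The favorite child $Q^{n+1}$ is the pullback of $Q^n$ under $R_{Q^n}^{p+q}$ through the orbit $0 \to R_{Q^n}(0) \to \cdots \to R_{Q^n}^{p+q}(0)$. By construction $R_{Q^n}^k(0)\in P^n$ for $k<p$, lies in $Q^n\setminus P^n$ for $p\le k<p+q$, and lands in $P^n$ again at $k=p+q$. Since $R_{Q^n}|P^n$ is a degree-$d$ branched cover (as $P^n$ is the first child), while the off-critical branches of $R_{Q^n}$ are univalent, the critical point of $P_c$ appears at most a uniformly bounded number of times along the pull-back orbit, giving $\deg(P_c^k:Q^{n+1}\to Q^n)\le D(d)$ for some $D(d)$ depending only on $d$ (in fact $\le d^2$).

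Next I would apply the Kahn--Lyubich covering lemma. In its relevant form, one has a branched covering $F:(\widetilde U,\widetilde A)\to(U,A)$ of degree at most $D$, where $A$ is an annulus in $U$ surrounding the critical value and $\widetilde A$ is the pulled-back annulus around $0$; if moreover the modulus of $A$ is large compared to the modulus of a surrounding collar, then $\mathrm{mod}(\widetilde A)\ge \Phi(\mathrm{mod}(A),D)$ for an explicit function $\Phi$ whose iterates have an attracting fixed point bounded below by a universal $\delta=\delta(d)>0$. Taking $F$ to be (suitable restrictions of) $P_c^{k}:Q^{n+1}\to Q^n$ and the annuli to be the collars between first children and their parents, this produces a recursion of the form
\[
\mathrm{mod}(Q^{n+1}\setminus P^{n+1})\ \ge\ \Phi\bigl(\mathrm{mod}(Q^{n}\setminus P^{n}),\,D(d)\bigr).
\]

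Finally I would run the standard fixed-point argument on this one-dimensional recursion: the map $x\mapsto\Phi(x,D)$ has a uniform lower bound on its eventual range, so for any starting value $\mathrm{mod}(Q^1\setminus P^1)>\eps$ the orbit enters and stays in a definite interval $[\delta,\infty)$ after some $n_0=n_0(\eps)$ steps, which is exactly the claim.

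The main obstacle is the second step: setting up the geometry so that the covering lemma applies cleanly. One has to produce genuine annular collars around $P^n$ inside $Q^n$ whose moduli can be pulled back univalently off the critical orbit, and one must control how the critical branches of $P_c^k$ distribute among these pull-backs. This is the technical core of \cite{AKLS}; the rest of the argument is essentially a mechanical iteration once the covering lemma is in force.
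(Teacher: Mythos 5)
The paper does not prove this theorem: it is imported as a black box from \cite{AKLS}, introduced with the words ``the following \textit{a priori} bounds property has been proved in [AKLS],'' so there is no in-paper proof for your attempt to be compared against. You should read your task here as reproducing the AKLS argument, not the argument of this paper.

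Judged on that basis, your sketch captures the right strategic skeleton (bounded-degree transfer maps along the favorite nest, Kahn--Lyubich covering lemma, iterate an improvement function), but the degree-bound step is not justified as written. You argue that ``the critical point of $P_c$ appears at most a uniformly bounded number of times along the pull-back orbit,'' but your explanation (the critical branch of $R_{Q^n}$ is confined to $P^n$) does not by itself yield a bound independent of $p$ and $q$: the orbit $0, R_{Q^n}(0), \ldots, R_{Q^n}^{p-1}(0)$ can stay in $P^n$ for $p$ iterates with $p$ arbitrarily large, so counting visits to the critical component is not the same as counting critical passages. What actually forces the degree to be small is the Markov property of puzzle pieces: along the pullback chain of $Q^n$ under $R_{Q^n}^{p+q}$ through the critical orbit, the pullback domains $Q'_j$ are strictly decreasing puzzle pieces, and only the innermost one, $Q^{n+1}$ itself, can contain $0$. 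Section~3.2 of the present paper records the resulting precise statement: both $R_{Q^n}:P^n\to Q^n$ and $R_{Q^n}^{p+q}:Q^{n+1}\to Q^n$ have degree exactly $d$, not merely $\le d^2$. Since any uniform $D(d)$ feeds the covering-lemma recursion, the slack in your estimate is harmless, but the justification should be the Markov nesting argument rather than the ``bounded number of critical visits'' heuristic. The remainder of your plan (covering lemma plus a one-dimensional stabilization) is consistent with AKLS at the level of detail you give, and you are right that the genuinely technical content lies in setting up the collars so that the covering lemma applies.
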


If $P_c$ is combinatorially recurrent, the orbit of the critical point does not land at $\alpha$-fixed point. 
Therefore, puzzle pieces of all levels are well defined. 
Now, let $P_c$ be a non-renormalizable unicritical polynomial. 
The \textit{combinatorics of $P_c$} up to level $n$ is defined as an equivalence relation on the set of labels 
of all puzzle pieces of level less than or equal to $n$. 
Two such angles $\theta_1$ and $\theta_2$ are considered equivalent if $R^{\theta_1}$ and 
$R^{\theta_2}$ land at the same point. 
One can see that the combinatorics of a map up to level $n+t$ determines the puzzle piece $Y^{n}_{j}$ of level 
$n$ containing the critical value $P_c^{t}(0)$, for all positive integers $n$ and $t$. 
Two non-renormalizable maps are called \textit{combinatorially equivalent} if they have the same combinatorics 
up to an arbitrary level $n$. 
The combinatorics of a renormalizable map will be defined in Section \ref{combinatorics}.

Two unicritical polynomials $P_c$ and $P_{\tilde{c}}$ with the same combinatorics up to some level $n$ 
are called \textit{pseudo-conjugate up to level $n$} if there exists an orientation preserving homeomorphism 
$H:(\mathbb{C},0) \rightarrow (\mathbb{C},0)$, such that $H(Y_{j}^{0})=\widetilde{Y}^{0}_{j}$, for all $j$, 
and $H\circ P_c=P_{\tilde{c}} \circ H$ outside of the critical puzzle piece $Y_{0}^{n}$. 
Such a pseudo-conjugacy $H$ is said to \textit{match the B\"{o}ttcher marking}, if near infinity it becomes 
the identity in the B\"{o}ttcher coordinates for $P_c$ and $P_{\tilde{c}}$. 
Thus, by the equivariance property of a pseudo-conjugacy, it is the identity map in the B\"{o}ttcher coordinates 
outside of $\cup _{j} Y_j^{n}$.

Let $q_{m}$ and $p_{m}$ denote the levels of the puzzle pieces $Q^{m}$ and $P^{m}$, respectively, 
in the nest \eqref{nest}, that is, \text{$Q^{m}=Y^{q_{m}}_0$}, and \text{$P^{m} =Y^{p_{m}}_0$}. 
The following theorem is the main technical result of \cite{AKLS} which is frequently used in the proof of 
our main theorem.

\begin{thm} \label{pseudo-conj}
Assume that a nest of puzzle pieces 
\begin{equation} \label{finite-nest}
Q^{1}\supset P^{1}\supset Q^{2}\supset P^{2}\supset\dots\supset Q^{m}\supset P^{m}
\end{equation} 
is obtained for $P_c$, and $P_{\tilde{c}}$ is combinatorially equivalent to $P_c$ up to level $q_m$, 
where $Q^{m}=Y^{q_{m}}_0$. Then there exists a $K$-q.c.\ pseudo-conjugacy $H$ up to level $q_{m}$ between $P_c$ 
and $P_{\tilde{c}}$ which matches the B\"{o}ttcher marking.
\end{thm}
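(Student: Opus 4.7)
The plan is to build $H$ via a Sullivan-Thurston pullback procedure: start from the B\"{o}ttcher conjugacy near infinity, extend it to a combinatorial homeomorphism between the level-zero puzzle partitions, and then improve it step by step to be equivariant outside deeper and deeper critical puzzle pieces, controlling the dilatation by appealing to the a priori bounds along the modified principal nest. For the base case, I would define $H_0 = B_{\tilde c}^{-1}\circ B_c$ outside the equipotential $E^r$, which automatically matches the B\"{o}ttcher marking and conjugates $P_c$ to $P_{\tilde c}$ there. Combinatorial equivalence up to level $q_m$ ensures that the rays landing at $\alpha_c$ (and their $P_c$-preimages up to depth $q_m$) correspond to those of $P_{\tilde c}$, so the boundary of each $Y_j^0$ is mapped onto $\partial \widetilde Y_j^0$ compatibly with the combinatorial labels. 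I would then extend $H_0$ to the interior of each $Y_j^0$ by an arbitrary quasi-conformal homeomorphism matching these boundary values; since the pieces are topological disks with piecewise analytic boundary of uniformly bounded shape, this extension has absolutely bounded dilatation.

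Second, by induction, suppose a $K$-q.c.\ pseudo-conjugacy $H_n$ up to level $n$ matching the B\"{o}ttcher marking has been constructed. Every puzzle piece of level $n+1$ that does not contain the critical point is mapped univalently by $P_c$ onto a piece of level $n$, so pulling back $H_n$ through these univalent branches produces an equivariant definition there with dilatation unchanged. On the critical piece $Y_0^{n+1}$, the map $P_c$ is a degree-$d$ branched cover onto a level-$n$ piece, and lifting $H_n$ through this cover yields a q.c.\ map on $Y_0^{n+1}$ also of dilatation $K$. These new definitions generally disagree with $H_n$ on the inner boundary $\partial Y_0^{n+1}$ of the annulus $Y_0^n \setminus Y_0^{n+1}$, so I would interpolate on this annulus by a standard q.c.\ surgery to obtain $H_{n+1}$, keeping the definition unchanged on everything exterior to $Y_0^n$.

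The crucial point is to bound the accumulated dilatation uniformly in $m$. Along the modified principal nest, Theorem \ref{priori bounds} guarantees a definite lower bound $\delta$ on $\mathrm{mod}(Q^i\setminus P^i)$ for $i$ past an absolute threshold, while the finitely many initial levels contribute an amount controlled by the initial modulus $\mathrm{mod}(Q^1\setminus P^1)$. The levels strictly between consecutive $Q^i$'s of the principal nest are traversed only by univalent pullbacks and by the degree-$d$ lift defining the first child, neither of which increases dilatation; all new dilatation is confined to the interpolations on the principal annuli, where the definite modulus permits each interpolation to be built with dilatation bounded by a constant depending only on $\delta$.

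The main obstacle is the quasi-conformal surgery on each principal annulus $Q^i \setminus P^i$, which must simultaneously match the branched lift on the inner boundary, match $H_n$ on the outer boundary, respect the B\"{o}ttcher marking along the rays entering the annulus, and remain in the homotopy class forced by the Markov partition so that the glued map is a genuine pseudo-conjugacy. The uniform lower bound on the moduli of these annuli, together with the rigidity of the relevant homotopy class (fixed by the combinatorial equivalence on the rays cutting the annulus), is precisely what makes such a surgery possible with dilatation bounded independently of $i$, producing the desired uniform constant $K$.
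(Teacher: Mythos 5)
The paper does not give a self-contained proof of this theorem; it cites \cite{AKLS} and only sketches the argument inside the proof of Proposition \ref{unifrom-pseudo-conj}. Comparing your outside-in pullback scheme with that sketch reveals a genuine structural difference and a gap in your dilatation control.

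The paper's (i.e.\ \cite{AKLS}'s) construction proceeds in two decoupled stages. First, one builds auxiliary homeomorphisms $G_0, G_1, \dots, G_n$ by the pullback you describe, but \emph{with no control on dilatation}: at each step one must adjust $G_{i-1}$ inside the level-$i$ puzzle pieces so that it sends $c$ to $\tilde c$ before lifting, and this adjustment is allowed to blow up. These $G_i$ are used only to transfer the combinatorics, i.e.\ to produce the nest $\widetilde Q^1 \supset \widetilde P^1 \supset \dots$ for $P_{\tilde c}$ and verify it has the same structure. Second, one constructs the $K$-q.c.\ homeomorphism \emph{directly from the inside out}: a map $H_n\colon Q^m \to \widetilde Q^m$ on the innermost critical piece is built with dilatation controlled by the a priori bound $\delta$, and the full pseudo-conjugacy is then obtained by spreading $H_n$ to the other pieces by \emph{univalent} pullbacks, which are conformal and hence do not increase dilatation. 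The separation of these two stages is the whole point.

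Your scheme collapses the two stages and thereby loses dilatation control. Concretely: (1) You never adjust $H_n$ so that $H_n(c)=\tilde c$ before lifting through the critical branch; without this the degree-$d$ lift on $Y_0^{n+1}$ need not exist (the ramification data do not match). Inserting the adjustment at each level is unavoidable, and its cost is not bounded by the modulus of a principal annulus. (2) Your interpolation on $Y_0^n\setminus Y_0^{n+1}$ is problematic because consecutive critical puzzle pieces typically share boundary rays, so this ``annulus'' is degenerate; there is no q.c.\ surgery on it with dilatation bounded by any modulus. Even when one does have a nondegenerate annulus, your claim that ``all new dilatation is confined to the interpolations on the principal annuli'' does not follow from the construction you wrote: your induction performs an interpolation at \emph{every} level $n\to n+1$, not only at the $q_i$'s, and there is nothing in the outside-in iteration that absorbs the dilatation accumulated at intermediate levels. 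The reason \cite{AKLS} can invoke Theorem \ref{priori bounds} to get a uniform $K$ is precisely that they do \emph{not} iterate interpolations through all the levels; they build the controlled map once, on the deepest piece, and propagate it outward only by conformal lifts. Your proposal does not reproduce that mechanism and, as written, does not yield a uniform $K$.
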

To control the dilatation of the pseudo-conjugacy obtained in this theorem, we show the following statement.
{\samepage
\begin{prop}\label{unifrom-pseudo-conj}
Assume that the nest of puzzle pieces in the above theorem is defined using the equipotential of level $\eta$, 
then the dilatation of the q.c.\ pseudo-conjugacy obtained in that theorem depends only on the hyperbolic distance
between $c$ and $\tilde{c}$ in the primary wake (containing $c$ and $\tilde{c}$) truncated by the parameter
equipotential of level $\eta$, and the modulus of the annulus $Q^1\setminus P^1$.
\end{prop}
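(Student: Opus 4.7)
The plan is to trace through the construction of the pseudo-conjugacy in Theorem~\ref{pseudo-conj} and identify where each piece of dilatation enters, showing that each contribution is controlled either by the hyperbolic distance between $c$ and $\tilde{c}$ in the truncated primary wake or by the a priori bounds furnished by Theorem~\ref{priori bounds} applied with starting modulus $\mod(Q^{1}\setminus P^{1})$.

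First I would construct the initial, level-zero piece of the pseudo-conjugacy via a holomorphic motion argument. The external rays landing at $\alpha_c$ and the equipotential $E^{\eta}$ move holomorphically over the truncated primary wake $W(\eta)$ (their endpoints and the intermediate B\"ottcher coordinates depend holomorphically on the parameter throughout a primary wake). By S\l odkowski's extension theorem this motion extends to a holomorphic motion of $\mathbb{C}$ parametrized by $W(\eta)$, and the restriction of this motion from the basepoint $c$ to $\tilde{c}$ is a q.c.\ homeomorphism whose dilatation is bounded by a universal function of the hyperbolic distance $\mathrm{dist}_{W(\eta)}(c,\tilde{c})$. Since this motion respects the B\"ottcher coordinates at infinity and sends the level-zero graph of $P_c$ to the corresponding graph of $P_{\tilde{c}}$, it furnishes a q.c.\ pseudo-conjugacy of level zero matching the B\"ottcher marking, whose dilatation depends only on $\mathrm{dist}_{W(\eta)}(c,\tilde{c})$.

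Next I would run the Kahn--Lyubich type pullback argument through the modified principal nest exactly as in \cite{AKLS}, starting from the level-zero map just built. At each step of the inductive construction the pseudo-conjugacy is lifted across an annulus of the form $Q^{n}\setminus P^{n}$ (or one of its iterated preimages) and glued to the previous map by Bers' sewing lemma; the new dilatation that appears is controlled quantitatively in terms of the moduli of these annuli. Here is where the hypothesis $\mod(Q^{1}\setminus P^{1})>\eps$ enters: by Theorem~\ref{priori bounds} there is a constant $\delta=\delta(\eps)$ such that $\mod(Q^{n}\setminus P^{n})\ge\delta$ for all $n\ge n_{0}(\eps)$, while for the finitely many initial levels $n<n_{0}(\eps)$ the moduli are bounded below by a quantity depending only on $\mod(Q^{1}\setminus P^{1})$ through the Kahn--Lyubich covering inequalities. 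Consequently the dilatation picked up at each pullback stage is bounded by a constant depending only on $\mod(Q^{1}\setminus P^{1})$, and the geometric decay of the $\overline{\partial}$-supports combined with the standard QC criterion (see \cite{AKLS}) assembles these into a single uniform bound on the full pseudo-conjugacy.

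Combining the two estimates yields the claim: the dilatation of the resulting pseudo-conjugacy depends only on the pair $\bigl(\mathrm{dist}_{W(\eta)}(c,\tilde{c}),\,\mod(Q^{1}\setminus P^{1})\bigr)$. The main obstacle I anticipate is the first step, namely verifying that the holomorphic motion of the level-zero graph is genuinely parametrized by the whole truncated wake (so that the full hyperbolic metric on $W(\eta)$ is available) and that the extension can be arranged to match the B\"ottcher marking; everything after that is a bookkeeping application of results already stated in the paper.
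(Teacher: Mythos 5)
Your sketch follows the same route as the paper's proof: the Słodkowski extension of the holomorphic motion over the truncated primary wake controls the level-zero dilatation via the hyperbolic distance, and Theorem~\ref{priori bounds} applied to the modified principal nest (with starting modulus $\mod(Q^1\setminus P^1)$ determining $n_0$ and $\delta$) controls the remaining contribution. The paper organizes the pullback slightly differently — it first builds an uncontrolled $G_n$ merely to transport the nest to the $\tilde{c}$-plane, then constructs the controlled map $H_n$ directly between the deepest pieces and spreads it by univalent lifts — but the identification of the two sources of dilatation, which is the content of the proposition, is the same.
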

}
\begin{proof}
To prove the proposition, we need a brief sketch of the proof of the above theorem. For more details one may 
refer to \cite{AKLS}. 

The combinatorial equivalence of $P_c$ and $P_{\tilde{c}}$ up to level zero implies that $c$ 
and $\tilde{c}$ belong to the same truncated wake $W(\eta)$ attached to the main hyperbolic component of $\M_d$. 
Inside $W(\eta)$, the $q$ external rays $\mathfrak{R}(\alpha)$ and $E^h$, for any $h >\eta$, 
move holomorphically in $\mathbb{C}\setminus 0$. 
That is, there exists a holomorphic motion 
$\Phi:W(\eta)\times \{\mathfrak{R}(\alpha)\cup E^h\}\ra W(\eta)\times \cc$, given by 
$B_{\tilde{c}}^{-1} \circ B_c$ in the second coordinate, such that
\[\Phi(c,\mathfrak{R}(\alpha)\cup E^h)=(\tilde{c},\mathfrak{R}(\widetilde{\alpha})\cup \widetilde{E}^h).\]

Outside of the equipotential $E^h$, this holomorphic motion extends to the motion holomorphic in both variables 
$(c,z)$ which is obtained from the B\"{o}ttcher coordinates near $\infty$. 
By \cite{Sl} the map $\Phi(\tilde{c},\cdot)\circ \Phi(c,\cdot)^{-1}$ extends to a $K_{0}$-q.c.\ homeomorphism 
$G_0:(\mathbb{C},0) \ra (\mathbb{C},0)$, where $K_{0}$ depends only on the hyperbolic distance between $c$ 
and $\tilde{c}$ in $W(\eta)$. 
The map $G_0$ conjugates $P_c$ to $P_{\tilde{c}}$ outside of the puzzle pieces of level zero.

By adjusting $G_0$ inside the equipotential $E^h$ such that it sends $c$ to $\tilde{c}$, 
we obtain a q.c.\ homeomorphism (not necessarily with the same dilatation) $G_0'$. 
By lifting $G_0'$ via $P_c$ and $P_{\tilde{c}}$ we obtain a new q.c.\ homeomorphism $G_1$. 
That is, $G_1$ is the unique map satisfying $ P_{\tilde{c}} \circ G_1= G_0'  \circ  P_c$.
 
Now, we repeat the following two processes, for $i=2, 3, \dots, n=q_m$, 
\begin{itemize}
\item[--] Adjust the q.c.\ homeomorphism $G_{i-1}$ inside the union of puzzle pieces of level $i$ so that 
it sends $c$ to $\tilde{c}$;
\item[--] Lift the adjusted map via $P_c$ and $P_{\tilde{c}}$ to obtain a q.c.\ homeomorphism $G_{i}$ 
(not necessarily with the same dilatation) conjugating $P_c$ to $P_{\tilde{c}}$ outside the union of puzzle pieces of level $i$. 
\end{itemize}

At the end, we obtain a q.c.\ homeomorphism $G_n$ which conjugates $P_c$ to $P_{\tilde{c}}$ outside of the equipotential 
$E^{h/d^n}$.

The nest of puzzle pieces 
\[\widetilde{Q}^{1}\supset \widetilde{P}^{1}\supset \widetilde{Q}^{2}\supset \widetilde{P}^{2}\supset\dots\supset \widetilde{Q}^{m}\supseteq \widetilde{P}^{m}\] 
for $P_{\tilde{c}}$ is defined as the image of the nest \eqref{finite-nest} under $G_n$. 
The combinatorial equivalence of $P_c$ and $P_{\tilde{c}}$ implies that this new nest has the same properties as 
the one of $P_c$. 
In other words, $\widetilde{Q}^{i+1}$ is the favorite child of $\widetilde{Q}^i$, and $\widetilde{P}^i$ is 
the first child of $\widetilde{Q}^i$. 
Hence, Theorem \ref{priori bounds} applies to this nest as well. 
By properties of these nests, one constructs a $K$-q.c.\ homeomorphism $H_n$ from the critical puzzle piece $Q^{n}$ 
to $\widetilde{Q}^{n}$, where $K$ depends only on the \textit{a priori} bounds $\delta$ and the hyperbolic 
distance between $c$ and $\tilde{c}$ in $W(\eta)$. 
The pseudo-conjugacy $H_{n}$ is obtained from univalent lifts of $H_n$ onto other puzzle pieces.
\end{proof}

If $P_c$ is renormalizable, the process of constructing the modified principal nest stops at some level, and all 
the returns to the critical puzzle pieces of higher level become central. 
One can see that in this situation the critical puzzle pieces do not shrink to 0.

\subsection{Combinatorics of a map}\label{combinatorics}
If a map $P_{c_0}$ is renormalizable, there is a unique maximal homeomorphic copy $\M^{1}_{d} \ni c_{0}$ of the
connectedness locus within the connectedness locus satisfying the following properties (see\cite{DH2}): For
$c\in\M^{1}_{d}\setminus \{\textrm{root}\}$, $P_{c}:z \mapsto z^{d}+c$ is renormalizable, and in addition, 
there is a holomorphic motion of the dividing fixed point $\alpha _{c}$ and the rays landing at it on a 
neighborhood of $\M^{1}_{d} \setminus \{\textrm{root}\}$, such that the renormalization of $P_{c}$ is 
associated to this fixed point and the external rays landing at it. 
Furthermore, all parameters in this copy have Yoccoz puzzle pieces of all levels with the same labels. 
This copy is maximal in the sense that it is not contained in any other copy except the actual connectedness locus. 
The homeomorphism from the copy to $\M_d$ is not unique because of the rotational symmetry of $\M_d$. 
However, we make it unique by sending the sole root point of the copy to the landing point of the parameter 
external ray of angle zero. 
We denote this (first) renormalization of $P_c$ by $\rr P_c$.

Assume that $\rr P_c$ is given as $P_c^j:U \rightarrow U'$, for some positive integer $j$ and topological 
disks $U$ and $U'$. 
By the straightening theorem, $\rr P_c$ is conjugate to a unicritical polynomial $P_{c'}$.  
The polynomial $P_{c'}$ is determined up to conformal equivalence in this theorem. 
However, there are only $d-1$ polynomials in each conformal class (these are $c'\cdot \lambda$, for $\lambda$
with $\lambda^{d-1}=1$).
We make this parameter unique by choosing the image of $c$ under the unique  homeomorphism from the copy to 
the connectedness locus determined above.   

If $P_{c'}$ is also renormalizable, $P_c$ is called \textit{twice renormalizable}. 
Let the positive integer $k$, and topological disks $V$ and $V'$ be such that \text{$P_{c'}^k:V\rightarrow V'$} 
gives the first renormalization of $P_{c'}$ determined as above. 
Define the topological disks $\widetilde{V}$ and $\widetilde{V}'$ as $\chi$-preimages of $V$ and $V'$, respectively, 
where $\chi$ is a straightening of $\rr P_c$. 
One can see that $\chi$ conjugates $P_c^{jk}:\widetilde{V}\ra \widetilde{V}'$ to $P_{c'}^k:V \rightarrow V'$. 
Therefore, $P_c^{jk}:\widetilde{V}\rightarrow \widetilde{V}'$ is also polynomial-like. 
We denote this map by $\rr^{2} P_c$. 

The above process may be continued to associate a finite or an infinite sequence $P_c,\rr P_c,\rr^{2}P_c,\ldots$, 
of polynomial-like mappings to $P_c$, and accordingly, call $P_c$ \textit{at most finitely} or 
\textit{infinitely renormalizable}. 
Let $P_{c_1}, P_{c_2}, P_{c_3},\dots$ denote the polynomials obtained from straightening the 
polynomial-like mappings $\langle\rr^n P_c\rangle_{n=0}$. 
We associate the finite or infinite sequence 
\[\tau(P_c):=\langle\M^1_d, \M^2_d, \ldots\rangle,\] 
of the maximal copies of the locus to $P_c$, where $\M^n_d$ corresponds to the renormalization 
$\rr P_{c_{n-1}}$. 
Earlier in Section \ref{complex-bound} we defined the combinatorics of a non-renormalizable unicritical 
polynomial as the equivalence relation on the labels of the Yoccoz puzzle pieces. 
It turns out that all the parameters in a given copy of the connectedness locus within the parameter space have 
the same combinatorics in this sense. 
To further refine our definition of the combinatorics, one may consider the same equivalence relation, i.e.\ 
landing at the same point, on a larger set of angles of external rays. 
Here, for an infinitely renormalizable $P_c$, the sequence $\tau(P_c)$ is called the \textit{combinatorics} of $P_c$. 
This definition, which is chosen for our convenience, is equivalent to the above definition of the combinatorics
when we consider the equivalence relation on the set of angles of all periodic external rays.     

Hence, two infinitely renormalizable maps are called \textit{combinatorially equivalent} if they have the 
same combinatorics, i.e.,  correspond to the same sequence of maximal connectedness locus copies. 

We say that an infinitely renormalizable $P_c$ satisfies the \textit{secondary limbs condition}, if all the parameters 
$c_1, c_2, \dots$, obtained from straightening the \text{polynomial-like} mappings $\{\rr^n P_c\}_{n=0}^\infty$ belong to 
a finite number of truncated secondary limbs.
Let $\mathcal{SL}$ stand for the class of infinitely renormalizable unicritical polynomial-like mappings satisfying the 
secondary limbs condition.

An infinitely renormalizable map $P_c$ is said to satisfy \textit{a priori bounds}, if there exists an $\eps >0$ with 
$\mod (\rr^{m}P_c)\geq \eps$, for all $m\geq 1$.
\section{Proof of the rigidity theorem}
\subsection{Reductions}
{\samepage
\begin{thm}\label{THM}
Let $f$ and $\tilde{f}$ be two infinitely renormalizable unicritical polynomial-like mappings satisfying the 
$\mathcal{SL}$ and \textit{a priori} bounds conditions. 
If $f$ and $\tilde{f}$ are combinatorially equivalent, then they are hybrid equivalent.
\end{thm}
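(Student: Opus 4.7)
The plan is to construct a \emph{Thurston conjugacy} between $f$ and $\tilde f$, namely a quasi-conformal homeomorphism $H:(\cc,0)\to(\cc,0)$ which is homotopic, relative to the post-critical set, to a topological conjugacy; once such an $H$ is built, the Sullivan--Thurston pullback argument will upgrade it to a hybrid conjugacy and prove the theorem. Combinatorial equivalence provides the initial topological conjugacy, and the straightening theorem together with the $\mathcal{SL}$ and a priori bounds hypotheses will supply quasi-conformal control at every renormalization level.

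The main construction is inductive on the renormalization level. At level $n$, let $P_{c_n}$ and $P_{\tilde c_n}$ be the straightenings of $\rr^n f$ and $\rr^n \tilde f$; combinatorial equivalence descends to every level. The $\mathcal{SL}$ hypothesis confines $\{c_n\}$ and $\{\tilde c_n\}$ to a fixed finite union of truncated secondary limbs, giving a uniform upper bound on the hyperbolic distance between $c_n$ and $\tilde c_n$ inside the primary wake truncated by a fixed equipotential; the a priori bounds give $\mathrm{mod}(Q^1\setminus P^1)\ge \eps$ uniformly in $n$. Proposition~\ref{unifrom-pseudo-conj} then delivers, at every level $n$, a $K$-q.c.\ pseudo-conjugacy matching the B\"ottcher marking with a \emph{single} constant $K$ independent of $n$. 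I would transport each such pseudo-conjugacy to the dynamical plane of $f$ via the straightening maps (whose dilatation is controlled by the a priori modulus, using the last proposition of Section~$2$) and splice it inside the little filled Julia piece of level $n$ into the previously constructed q.c.\ map, leaving the map unchanged outside. Because all pseudo-conjugacies match the B\"ottcher marking and every splice takes place inside a region that is a puzzle piece from the point of view of the lower level, the homotopy class rel.\ the post-critical set is preserved. A compactness argument for $K$-q.c.\ maps yields in the limit a $K$-q.c.\ homeomorphism $H$ of $\cc$ conjugating $f$ to $\tilde f$ off $\bigcap_n K_n$, where $K_n$ is the level-$n$ little filled Julia set.

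With $H$ in hand, the Sullivan--Thurston pullback argument finishes the proof: set $H_0:=H$ and, for each $k\ge 0$, let $H_{k+1}$ be the unique quasi-conformal lift with $\tilde f\circ H_{k+1}=H_k\circ f$ that is isotopic to $H_k$ rel.\ post-critical set. Each $H_k$ has dilatation at most $K$, so by normality a subsequence converges to a $K$-q.c.\ map $H_\infty$ that conjugates $f$ to $\tilde f$ everywhere. The Beltrami coefficients $\mu_k$ are $f^k$-pullbacks of $\mu_0$ and their supports collapse onto $\bigcap_n K_n$; under the a priori bounds this intersection has Lebesgue measure zero, so $\overline{\partial} H_\infty=0$ on $K(f)$ and $H_\infty$ is the desired hybrid conjugacy. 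The step I expect to be the main obstacle is the splicing in the second paragraph: inserting level-$n$ pseudo-conjugacies (built in distinct dynamical planes via straightening) into the existing q.c.\ map on the plane of $f$ without degrading the uniform dilatation bound and without changing the homotopy class rel.\ the post-critical set. The $\mathcal{SL}$ condition (to keep the combinatorics inside a compact family) and the a priori bounds (to keep the moduli definite and the straightening dilatations bounded) enter precisely here, together with the B\"ottcher-marking clause of Theorem~\ref{pseudo-conj} which is what makes the splices compatible across levels.
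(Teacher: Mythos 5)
Your overall plan — build a Thurston conjugacy and then run the Sullivan--Thurston pullback — is indeed the strategy of the paper, and you correctly identify the roles of Proposition~\ref{unifrom-pseudo-conj}, the straightening maps, and the $\mathcal{SL}$ and \textit{a priori} bounds hypotheses in getting uniform dilatation control. However, the second paragraph, which must carry the whole construction, has two genuine gaps. First, it tacitly assumes that every $\rr^n f$ is primitively renormalizable. When $\rr^{n-1}f$ is of satellite type the critical orbit never leaves $Y^1_0$, the modified principal nest is never defined, and Theorem~\ref{pseudo-conj} and Proposition~\ref{unifrom-pseudo-conj} give you nothing at that level; moreover the level-$n$ little Julia sets touch one another at $\alpha_{n-1}$, so they are not compactly contained in disjoint puzzle pieces into which you could splice. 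The paper has to split the induction into three cases ($\mathscr{A}$, $\mathscr{B}$, $\mathscr{C}$) depending on the types of two consecutive renormalizations, skip satellite levels when forming the subsequence $n(k)$, build an ad hoc puzzle partition from the rays landing at the deeper fixed point $\alpha_n$ in Case~$\mathscr{B}$, and in the doubly satellite Case~$\mathscr{C}$ replace the principal-nest machinery entirely with the bouquet geometry of Lemma~\ref{bouquet}. Since the $\mathcal{SL}$ condition allows infinitely many satellite levels, this cannot be waved away. Second, your assertion that matching the B\"ottcher marking across levels "preserves the homotopy class rel.\ the post-critical set" is exactly where the work lies: boundary values determine a q.c.\ annulus map only up to $\mathbb{Z}$ many Dehn twists, a wrong choice at even one level destroys the homotopy to $\Psi$ rel $\pc(f)$, and an uncontrolled choice destroys the uniform dilatation bound. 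Proposition~\ref{spiral}, Lemma~\ref{adjust}, Proposition~\ref{commute}, and the wrapping-number bookkeeping in the gluing and isotopy subsections exist precisely to pin down and bound the twist count $k_n$. You correctly flag this as the likely obstacle, but you do not resolve it.

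Your final passage from q.c.\ equivalence to hybrid equivalence is also incorrect as stated. The supports of the pulled-back Beltrami differentials $\mu_k$ do not collapse onto $\bigcap_n K_n$: the Thurston conjugacy $H$ is quasi-conformal but not conformal off that set, so $\mu_0$ is nonzero off $\bigcap_n K_n$ and its $f$-pullbacks remain nonzero there; the limiting Beltrami coefficient of $H_\infty$ is $f$-invariant and may well be supported on the whole Julia set. To conclude $\overline{\partial}H_\infty=0$ a.e.\ on $K(f)$ one needs McMullen's theorem that an infinitely renormalizable unicritical polynomial-like mapping with \textit{a priori} bounds carries no nontrivial invariant line field on its Julia set, or alternatively the open-and-closed connectivity argument of Proposition~\ref{openclosed}. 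A measure-zero claim about $\bigcap_n K_n$ is neither established nor sufficient for the conclusion you draw.
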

}
\begin{rem}
In particular, if the two maps $f$ and $\tilde{f}$ in the above theorem are polynomials, then hybrid 
equivalence becomes conformal equivalence. 
That is because the identity map in the B\"{o}ttcher coordinates, which conformally conjugates the two maps on 
the complements of their Julia sets, can be glued to the hybrid conjugacy on the Julia set. 
See Proposition $6$ in \cite{DH2} for a precise proof of this statement.
\end{rem}

The proof of this theorem breaks into the following steps:
\begin{gather*}
\text{combinatorial equivalence}\\
\Downarrow \\
\text{topological equivalence}\\
\Downarrow \\
\text{q.c.\ equivalence} \\
\Downarrow  \\
\text{hybrid equivalence}
\end{gather*}

It has been shown in \cite{Ji} that any \textit{unbranched} infinitely renormalizable map with \textit{a priori} bounds has a locally connected Julia set. 
The renormalizations $f^{n_k}:U_k\ra V_k$, for $k=1, 2, 3,\dots$, are said to satisfy the unbranched property if the domains 
$U_k$ and $V_k$ which provide the \textit{a priori} bounds also satisfy 
\[\pc(f)\cap U_k=\pc(f^{n_k}:U_k\ra V_k), \text { for } k=1, 2, 3, \dots .\] 
Here, the unbranched property follows from our combinatorial and \textit{a priori} bounds conditions (see \cite{Ly97}, Lemma $9.3$). Then, the first step, topological equivalence of combinatorially equivalent maps, follows from the
local connectivity of the Julia sets by Carath\'{e}odory's Theorem. 
That is, the identity map in the B\"{o}ttcher coordinates extends onto the Julia set. 
Indeed, by \cite{Do93}, there is a topological model for the Julia sets of these maps based on their combinatorics.   

The last step follows from McMullen's rigidity Theorem (\cite{Mc1}, Theorem $10.2$). 
He has shown that an infinitely renormalizable quadratic 
polynomial-like mapping with \textit{a priori} bounds does not support any nontrivial invariant line fields on 
its Julia set. 
The same proof works for unicritical polynomial-like mappings of any degree. 
It follows that any q.c.\ conjugacy $h$ between $f$ and $\tilde{f}$ satisfies $\overline{\partial}h=0 $ 
almost everywhere on the Julia set. Therefore, $h$ is a hybrid conjugacy between $f$ and $\tilde{f}$. 
However, if all infinitely renormalizable unicritical maps in a given combinatorial class satisfy the 
\textit{a priori} bounds condition, there is an easier way to show that q.c.\ conjugate maps are hybrid 
conjugate in that class. 
Since we are going to apply our theorem to combinatorial classes for which \textit{a priori} bounds has 
been established, we will prove this in Proposition~\ref{openclosed}.

So assume that $f$ and $\tilde{f}$ are topologically conjugate. We want to show the following:

\begin{thm}
Let $f$ and $\tilde{f}$ be infinitely renormalizable unicritical polynomial-like mappings satisfying the 
\textit{a priori} bounds and $\mathcal{SL}$ conditions. If $f$ and $\tilde{f}$ are topologically conjugate 
then they are quasi-conformally conjugate.
\end{thm}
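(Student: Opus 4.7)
My plan is to execute the Sullivan--Thurston pullback argument. Starting from the given topological conjugacy $h$ between $f$ and $\tilde{f}$, the goal is to produce a \emph{Thurston conjugacy}: a quasi-conformal homeomorphism $H:\mathbb{C}\to\mathbb{C}$ that is homotopic to $h$ relative to the post-critical set $\pc(f)$. Once such an $H$ is available, the standard pullback step is to lift $H$ iteratively through the dynamics of $f$ and $\tilde{f}$ on deeper and deeper levels of the dynamical partition; the successive lifts agree with $H$ on $\pc(f)$, stay in the same homotopy class, and inherit the same quasi-conformal dilatation, so a compactness/normal-family argument yields a subsequential limit $h_\infty$ that is a q.c.\ conjugacy between $f$ and $\tilde{f}$. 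The \textit{a priori} bounds play no role in this final step beyond what they already did in producing $H$.

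The core technical work, therefore, is the construction of $H$. The plan is to perform this inductively over the renormalization levels. At level $n$, $\rr^n f$ and $\rr^n \tilde{f}$ are combinatorially equivalent, so Theorem~\ref{pseudo-conj} produces a q.c.\ pseudo-conjugacy $H_n$ matching the B\"ottcher marking between the two renormalizations, down to some deep critical puzzle piece $Q^{m_n}_n$. By Proposition~\ref{unifrom-pseudo-conj}, the dilatation of $H_n$ is controlled by $\mod(Q^1_n \setminus P^1_n)$ and by the hyperbolic distance between the straightened parameters $c_n$ and $\tilde{c}_n$ inside the relevant truncated primary wake. The $\mathcal{SL}$ condition forces $c_n,\tilde{c}_n$ to lie in finitely many truncated secondary limbs, which have compact closure inside the primary wakes, so these hyperbolic distances are uniformly bounded; the \textit{a priori} bounds condition gives a uniform lower bound on the modulus. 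Hence the sequence $\{H_n\}$ has a uniform dilatation bound $K$.

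The next step is to assemble the pseudo-conjugacies $H_n$ into a single q.c.\ homeomorphism $H$ of the plane. On the complement of the first little Julia set, one uses $H_0$; inside it, one replaces $H_0$ by a lift of $H_1$ that agrees with $H_0$ near the boundary of the level-zero puzzle pieces entering the first renormalization; continuing, one replaces the map on successively deeper little Julia sets by lifts of $H_n$. At each stage the replacement is done inside a region bounded by curves on which the two maps already agree (by the B\"ottcher marking), so the gluing is continuous and preserves the uniform dilatation bound $K$. The limit exists because the little Julia sets shrink to $\pc(f)$ (a consequence of \textit{a priori} bounds together with local connectivity from \cite{Ji}), and on $\pc(f)$ the limit coincides with the unique extension of the B\"ottcher identification, which is also how $h$ acts on $\pc(f)$.

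The main obstacle is verifying that the resulting $H$ is genuinely homotopic to $h$ relative to $\pc(f)$. The maps $H_n$ are constructed by a pullback procedure based on the B\"ottcher marking, so a priori they carry no information about the topological conjugacy $h$; the homotopy class must be tracked through every gluing step. The key observation that makes this work is that $h$ and $H$ both respect the Markov structure of the Yoccoz puzzle pieces, match the B\"ottcher coordinates at infinity, and agree as combinatorial identifications of the post-critical set; since each complementary component of $\pc(f)$ is topologically a disk (or the basin of infinity) with a well-defined combinatorial description of its boundary, the two maps are forced to be isotopic on each such component rel its boundary. This combinatorial rigidity of the homotopy class, combined with the uniform q.c.\ bound, completes the production of the Thurston conjugacy and hence, via pullback, the q.c.\ conjugacy claimed by the theorem.
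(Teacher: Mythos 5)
Your overall strategy is the same as the paper's: reduce to a Thurston conjugacy, build it by assembling pseudo-conjugacies across renormalization levels with uniform dilatation controlled by the $\mathcal{SL}$ condition and \textit{a priori} bounds, then apply the Sullivan--Thurston pullback. However, there are two genuine gaps.

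First, your construction applies Theorem~\ref{pseudo-conj} at every renormalization level $n$, but that theorem presupposes that the favorite nest~\eqref{nest} can be formed for $\rr^n f$, which requires the critical orbit to escape the puzzle piece $Y^1_0$ at some moment. Precisely when $\rr^n f$ is \emph{satellite} renormalizable this never happens, so there is no favorite nest and no pseudo-conjugacy from Theorem~\ref{pseudo-conj}. Moreover, in the satellite case the little Julia sets of the next renormalization touch at the $\alpha$-fixed point of $\rr^n f$, so there is no equipotential separating them, and the whole architecture of ``deep critical puzzle piece containing the little Julia set well inside'' collapses. The paper gets around this by distinguishing three cases $\mathscr{A}$, $\mathscr{B}$, $\mathscr{C}$ depending on whether the current and following renormalizations are primitive or satellite, by skipping satellite levels (the subsequence $n(k)$), and, in the twice-satellite case $\mathscr{C}$, by replacing puzzle-piece geometry with the bouquet geometry of Lemma~\ref{bouquet}. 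Your proposal does not address any of this, and as written it simply fails on any satellite level.

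Second, your claim that the homotopy class of $H$ rel $\pc(f)$ is ``forced'' because both $h$ and $H$ respect the Markov structure and B\"ottcher marking is incorrect. For an infinitely renormalizable map satisfying \textit{a priori} bounds the post-critical set is a Cantor set, so its complement is a single domain of infinite connectivity, not a disjoint union of disks; two homeomorphisms of such a domain that agree on boundary data are \emph{not} automatically isotopic rel boundary. When gluing two q.c.\ homeomorphisms across an intermediate annulus with prescribed boundary values, the resulting map can lie in any one of $\mathbb{Z}$ distinct homotopy classes indexed by the number of Dehn twists, and choosing the wrong class destroys the Thurston-conjugacy property. This is exactly why the paper introduces the wrapping number, Lemma~\ref{adjust} (gluing with a prescribed twist count $k_n$), Proposition~\ref{spiral} (uniform bound on the twist count), and then verifies the homotopy class level-by-level in Lemmas~\ref{homos}, \ref{homos2} and the final isotopy proposition. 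This bookkeeping is the heart of the argument, and it cannot be replaced by the rigidity-of-homotopy-class claim you invoke.
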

Given sets $A\subseteq B\subseteq C$ and $\widetilde{A}\subseteq \widetilde{B}\subseteq \widetilde{C}$, the notation $h:(C, B, A)\ra (\widetilde{C}, \widetilde{B}, \widetilde{A})$ means that $h$ is a map from $C$ to $\widetilde{C}$ with $h(B)=\widetilde{B}$ and $h(A)=\widetilde{A}$.

\subsection{Thurston equivalence}
Suppose that two unicritical polynomial mappings $f:U_2\ra U_1$ and $\tilde{f}:\widetilde{U}_2 \ra \widetilde{U}_1$ are topologically conjugate. A q.c.\ homeomorphism 
\[h:(U_1, U_2,\pc(f)) \ra (\widetilde{U}_1,\widetilde{U}_2,\pc(\tilde{f}))\] 
is called a \textit{Thurston conjugacy} if it is homotopic, relative $\partial U_1 \cup \partial U_2 \cup  \pc(f)$, to a topological conjugacy 
\[\psi:(U_1, U_2,\pc(f)) \ra (\widetilde{U}_1, \widetilde{U}_2, \pc(\tilde{f}))\] 
between $f$ and $\tilde{f}$. 
Note that a Thurston conjugacy does not conjugate the two maps. 
It is a conjugacy on the post-critical set, and homotopic to a conjugacy on the complement of the post-critical set. 

The following result due to Thurston and Sullivan~\cite{S1} originates the ``pullback method'' in holomorphic dynamics.
\begin{lem}
Thurston conjugate unicritical polynomial-like mappings are q.c.\ conjugate.
\end{lem}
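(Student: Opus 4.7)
The plan is to apply the standard Sullivan--Thurston pullback construction: starting from $h_0 := h$, I will build a sequence of $K$-q.c.\ maps $h_n$ by iterated lifting under $f$ and $\tilde f$, all of the same dilatation $K = \Dil(h_0)$, and then extract a q.c.\ limit which is a genuine conjugacy.

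Let $\psi$ denote the topological conjugacy to which $h_0$ is homotopic relative to $\partial U_1 \cup \partial U_2 \cup \pc(f)$, so that $h_0 = \psi$ on this set. Inductively define
\[
h_n = \begin{cases} h_{n-1} & \text{on } U_1 \setminus f^{-(n-1)}(U_2), \\ \tilde f^{-n} \circ h_0 \circ f^n & \text{on } f^{-(n-1)}(U_2), \end{cases}
\]
where on each component of $f^{-(n-1)}(U_2)$ the branch of $\tilde f^{-n}$ is chosen so that $h_n$ matches $h_{n-1}$ along $\partial f^{-(n-1)}(U_2)$. Two features supplied by the Thurston hypothesis make this work: first, $h_0 = \psi$ on $\pc(f)$ sends the critical value $f(0)$ to $\tilde f(0)$, which makes every such lift single-valued; second, the identity $\tilde f \circ h_0 = h_0 \circ f$ on $\partial U_2$ (inherited from $\psi$) shows that $h_{n-1}$ is already a lift along the boundary, so the required branch exists and the pieces glue continuously. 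Because $f$ and $\tilde f$ are holomorphic, each lift preserves q.c.\ dilatation, and continuity across the piecewise analytic curve $\partial f^{-(n-1)}(U_2)$ makes $h_n$ globally $K$-q.c.\ by the standard removability property. By construction $h_n$ conjugates $f$ to $\tilde f$ on $U_2 \setminus f^{-n}(U_2)$, and $h_n = \psi$ on $\partial U_1 \cup \pc(f)$.

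To pass to the limit, observe that $\bigcap_{n} f^{-n}(U_2) = K(f)$, so $U_2 \setminus K(f) = \bigcup_n \bigl( U_2 \setminus f^{-n}(U_2) \bigr)$. The family $\{h_n\}$ is uniformly $K$-q.c.\ with fixed boundary values $\psi$ on $\partial U_1$, hence precompact in the uniform topology; extract a subsequential limit $h_\infty$, which is again $K$-q.c. On each $U_2 \setminus f^{-n}(U_2)$ every sufficiently late $h_m$ satisfies the conjugation equation, so $h_\infty$ satisfies it too; by continuity the equation extends to all of $U_2$, producing the desired q.c.\ conjugacy between $f$ and $\tilde f$.

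The principal obstacle---and the only point where the homotopy clause in the definition of Thurston conjugacy is used in a nontrivial way---is the very first lifting step: the equality $h_0 = \psi$ on $\partial U_2 \cup \pc(f)$ both pins down the correct branch on $\partial U_2$ (so the glued map is continuous and q.c.\ of the same dilatation as $h_0$) and forces single-valuedness of the lift at the critical value. Once this is in place for $h_1$, the same properties propagate up the tower $\{f^{-n}(U_2)\}$ automatically, and the remainder of the argument is routine.
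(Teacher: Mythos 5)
Your proposal is the Sullivan--Thurston pullback argument, which is exactly the route the paper takes: lift iteratively under the coverings $f$ and $\tilde f$, glue along $\partial f^{-(n-1)}(U_2)$ using the equivariance of the previous stage, exploit the uniform $K$-q.c.\ bound for compactness, and verify the conjugacy equation on the open set where the $h_n$ stabilize. The one place where your write-up elides something that the paper makes explicit is the final sentence ``by continuity the equation extends to all of $U_2$'': the conjugacy equation is established a priori only on $U_2\setminus K(f)$, whose closure equals $U_2$ precisely because $K(f)$ has empty interior --- automatic here since $f$ is infinitely renormalizable, but worth stating. A second, softer point: your claim that the homotopy hypothesis is used ``only at the very first lifting step'' slightly understates matters --- for the $n$-th lift to be single-valued at the critical point one needs the image of $f^{n}(0)$ under $h_0$ (equivalently, $h_{n-1}(f(0))$) to be $\tilde f^{n}(0)$, which uses $h_0=\psi$ on all of $\pc(f)$, not merely at the first postcritical point; the paper enforces this by lifting the homotopy at each stage so that every $h_n$ remains a Thurston conjugacy.
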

The proof given in \cite{Ly97}, Lemma~10.1, in the quadratic case works for the unicritical maps without any change.

By the \textit{a priori} bounds assumption in the theorem, there are topological disks $V_{n,0}\Subset U_{n,0}$
containing zero such that $\rr^n f:=f^{t_n}: V_{n,0}\ra U_{n,0}$, for $n=1, 2, \dots$, are unicritical 
degree $d$ polynomial-like with $\mod (U_{n,0}\setminus V_{n,0})\geq \eps$, for some $\eps>0$. 
By going several levels down, i.e., considering $f^{t_n}: f^{-{k_n}t_n}(V_n)\ra f^{-{k_n}t_n}(U_n)$, for some positive integers $k_n$, we may assume that $\mod (U_n\setminus V_n)$ and 
$\mod (\widetilde{U}_n\setminus \widetilde{V}_n)$ are uniformly bounded from above and are \textit{comparable}. 
The latter means that, there exists a constant 
$M>0$ such that for every $n\geq 1$,   
\begin{equation}\label{const-M}
\frac{1}{M}\leq\frac{\mod (U_n\setminus V_n)}{\mod (\widetilde{U}_n\setminus\widetilde{V}_n)}\leq M.
\end{equation} 
Also by slightly shrinking the domains $U_{n,0}$, if necessary, we may assume that they have smooth boundaries.
Hence, we can assume the following in the remainder of this note.  
\begin{itemize}\label{nice-properties}
\item[--]There exist positive constants $\eps$ and $\eta$ such that for every $n\geq 1$, 
\begin{equation}\label{const-eta}
\eps \leq \mod (U_{n,0}\setminus V_{n,0}) \leq \eta, \text{ and }\eps\leq \mod (\widetilde{U}_{n,0}\setminus \widetilde{V}_{n,0})\leq \eta;
\end{equation}
\item[--]For every $n\geq 1$, $U_{n,0}$ and $\widetilde{U}_{n,0}$ have smooth boundaries;
\item[--] There exists a constant $M>0$, for which \eqref{const-M} holds for every $n\geq1$.
\end{itemize}

We use the following notations throughout the rest of this note.
\begin{align*}
&f:V_0\ra U_0, J_{0,0}=J(f), \\
&\rr^{n}f=f^{t_{n}}:V_{n,0}\ra U_{n,0}, J_{n,0}=J(\rr^{n}f),\text{ for } n\geq 1 \text{ and } t_n\geq 1.
\end{align*}
The domain $V_{n,i}$, for $i=1,2,\dots,t_n-1$, is defined as the preimage of $V_{n,0}$ under $f^{i}$ containing 
the \textit{little Julia set} $J_{n,i}:=f^{t_n-i}(J_{n,0})$. 
Similarly, $U_{n,i}$, for $i=1, 2, \dots, t_n-1$, is defined as the component of $f^{-i}(U_{n,0})$ 
containing $V_{n,i}$ so that $f^{t_n}:V_{n,i}\ra U_{n,i}$ is polynomial-like of degree $d$. 
The domain $W_{n,i}$ is defined as the preimage of $V_{n,i}$ under the map $f^{t_n}:V_{n,i}\ra U_{n,i}$. 

Note that $\rr^{n}f : V_{n,i}\ra U_{n,i}$ is a polynomial-like mapping of degree $d$ with the Julia set 
$J_{n,i}$, and is conjugate to $\rr^{n}f:V_{n,0}\ra U_{n,0}$ by the conformal isomorphism 
\text{$f^{i}:U_{n,i}\ra U_{n, 0}$}. 

It has been proved in \cite{Ly97} (Lemma~9.2) that for the parameters satisfying our 
assumptions, the little Julia sets on the primitive levels are \textit{well apart}. This means that for 
every $n\geq1$ with $\rr^{n-1} f$ primitively renormalizable, one can choose pairwise disjoint 
domains $U_{n,i}$, for $i=1, 2, \dots, t_n-1$, for the renormalizations with moduli of the annuli $U_{n,i} \setminus V_{n,i}$ uniformly away from zero independent of $n$ and $i$. 
So, we will assume that on the primitive levels, the domains $U_{n, i}$ are disjoint for different values of $i$.
\begin{remNot}
From now on, any notation introduced for $f$ will be automatically introduced for $\tilde{f}$, and marked with a tilde. 
\end{remNot}
To build a Thurston conjugacy, first we introduce multiply connected domains $\Omega_{n(k), i}$ 
(and $\widetilde {{\Omega}}_{n(k),i}$) in $\mathbb{C}$, for an appropriate subsequence $n(1)< n(2)< n(3)< \cdots$ 
of the renormalization levels and $0\leq i\leq t_{n(k)}-1$, as well as a sequence of q.c.\ homeomorphisms 
\[h_{n(k),i}:\Omega_{n(k),i}\ra \widetilde {{\Omega}}_{n(k),i},\]
for $k=0,1,2, \dots$ and $i=0,1,2,\dots, t_{n(k)}-1$, with uniformly bounded dilatations. 
These domains will satisfy the following properties (see Figure~\ref{multiple}):
\begin{itemize}
\item[--] Every $\Omega_{n(k),j}$ is a topological disk minus $\frac{t_{n(k+1)}}{t_{n(k)}}$ number of 
topological disks denoted by $D_{n(k+1),j+it_{n(k)}}$, for $i=0,1,\dots, \frac{t_{n(k+1)}}{t_{n(k)}}-1 $;
\item[--] Every $\Omega_{n(k),i}$ is \textit{well inside} $D_{n(k),i}$, that is, the moduli of the annuli obtained from $D_{n(k),i}\setminus \Omega_{n(k),i}$ are uniformly bounded away from zero independent of $n(k)$ and $i$;
\item[--] Every \textit{little post-critical set} $J_{n(k),i}\cap \pc(f)$ is well inside $D_{n(k),i}$;
\item[--] Every $D_{n(k),i}$ is the preimage of $D_{n(k),0}$ under $f^{i}$ containing $J_{n(k),i} \cap \pc(f)$.
Every $\Omega_{n(k),i}$ is the component of $f^{-i}(\Omega_{n(k),0})$ inside $D_{n(k),i}$.
\end{itemize}

Finally, we construct the Thurston conjugacy by appropriately gluing together the maps 
$h_{n(k),i}:\Omega_{n(k),i}\ra \widetilde {{\Omega}}_{n(k),i}$ on the complement of these 
multiply connected domains (which is a union of annuli). 
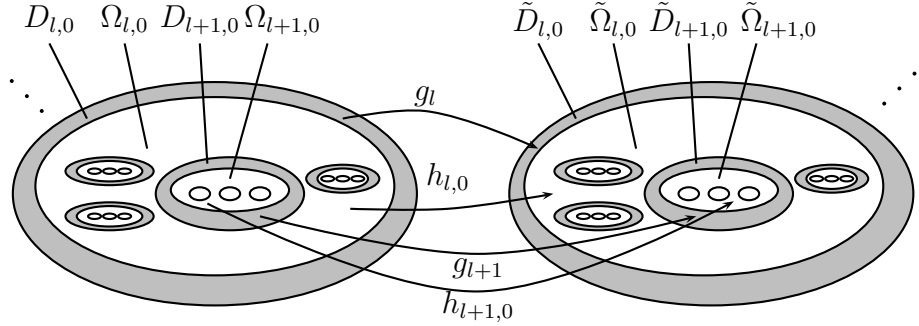
\begin{figure}[ht]
\begin{center}
  \begin{pspicture}(12,6)
  \psset{xunit=1cm}
  \psset{yunit=1cm}
       \psellipse[fillstyle=solid,fillcolor=lightgray](2.7,3)(2.7,1.5)              
       \psellipse[fillstyle=solid,fillcolor=white](2.7,3.1)(2.4,1.2) 

  \psellipse[fillstyle=solid,fillcolor=lightgray](2.9,3)(1,.5)
  \psellipse[fillstyle=solid,fillcolor=white](2.9,3.05)(.8,.3)
 
  \psellipse[fillstyle=solid,fillcolor=lightgray](1.3,3.3)(.6,.2) 
  \psellipse[fillstyle=solid,fillcolor=white](1.3,3.3)(.45,.15)
 
  \psellipse[fillstyle=solid,fillcolor=lightgray](1.3,2.7)(.6,.2)
  \psellipse[fillstyle=solid,fillcolor=white](1.3,2.7)(.45,.15)
 
  \psellipse[fillstyle=solid,fillcolor=lightgray](4.4,3.2)(.5,.2)
  \psellipse[fillstyle=solid,fillcolor=white](4.4,3.2)(.35,.15)

  \psellipse(2.9,3)(.15,.1)
  \psellipse(2.5,3)(.15,.1)
  \psellipse(3.3,3)(.15,.1)

  \psellipse(1.3,3.3)(.1,.05)
  \psellipse(1.1,3.3)(.1,.05)
  \psellipse(1.5,3.3)(.1,.05)

  \psellipse(1.3,2.7)(.1,.05)
  \psellipse(1.1,2.7)(.1,.05)
  \psellipse(1.5,2.7)(.1,.05)

  \psellipse(4.4,3.2)(.1,.05)
  \psellipse(4.6,3.2)(.1,.05)
  \psellipse(4.2,3.2)(.1,.05)
  \psellipse[fillstyle=solid,fillcolor=lightgray](9.3,3)(2.7,1.5)              
  \psellipse[fillstyle=solid,fillcolor=white](9.2,3.1)(2.4,1.2) 

  \psellipse[fillstyle=solid,fillcolor=lightgray](9.4,3)(1,.5)
  \psellipse[fillstyle=solid,fillcolor=white](9.4,3.05)(.8,.3)
 
  \psellipse[fillstyle=solid,fillcolor=lightgray](7.8,3.3)(.6,.2) 
  \psellipse[fillstyle=solid,fillcolor=white](7.8,3.3)(.45,.15)
 
  \psellipse[fillstyle=solid,fillcolor=lightgray](7.8,2.7)(.6,.2)
  \psellipse[fillstyle=solid,fillcolor=white](7.8,2.7)(.45,.15)
 
  \psellipse[fillstyle=solid,fillcolor=lightgray](10.9,3.2)(.5,.2)
  \psellipse[fillstyle=solid,fillcolor=white](10.9,3.2)(.35,.15)

  \psellipse(9.4,3)(.15,.1)
  \psellipse(9,3)(.15,.1)
  \psellipse(9.8,3)(.15,.1)

  \psellipse(7.8,3.3)(.1,.05)
  \psellipse(7.6,3.3)(.1,.05)
  \psellipse(8,3.3)(.1,.05)

  \psellipse(7.8,2.7)(.1,.05)
  \psellipse(7.6,2.7)(.1,.05)
  \psellipse(8,2.7)(.1,.05)

  \psellipse(10.9,3.2)(.1,.05)
  \psellipse(11.1,3.2)(.1,.05)
  \psellipse(10.7,3.2)(.1,.05)

  \psline(.5,5)(1,4)        \rput(.5,5.3){$D_{l,0}$}
  \psline(1.5,5)(1.8,3.6)   \rput(1.5,5.3){$\Omega_{l,0}$}
  \psline(2.5,5)(2.6,3.4)   \rput(2.5,5.3){$D_{l+1,0}$}
  \psline(3.4,5)(2.9,3.2)   \rput(3.6,5.3){$\Omega_{l+1,0}$}
  \psline(7,5)(7.5,4)        \rput(7,5.3){$\tilde{D}_{l,0}$}
  \psline(8,5)(8.3,3.6)   \rput(8,5.3){$\tilde{\Omega}_{l,0}$}
  \psline(9,5)(9.1,3.4)   \rput(9,5.3){$\tilde{D}_{l+1,0}$}
  \psline(9.9,5)(9.4,3.2)   \rput(10.2,5.3){$\tilde{\Omega}_{l+1,0}$}

  \pscurve{->}(4.4,4)(5.5,4.1)(7,3.6)  \rput(5.5,4.3){$g_l$}
  \pscurve{->}(4.5,2.8)(5.8,2.8)(7.2,3)  \rput(5.8,3.2){$h_{l,0}$}
  \pscurve{->}(3.3,2.7)(6.2,2.2)(9.1,2.7)  \rput(6.2,2){$g_{l+1}$}
  \pscurve{->}(2.6,2.87)(6.2,1.7)(9.6,2.9)  \rput(6.2,1.5){$h_{l+1,0}$}
  
  \psdots[dotsize=1.5pt](0,4.5)(.2,4.3)(.4,4.1)
  \psdots[dotsize=1.5pt](11.6,4.2)(11.8,4.4)(12,4.6)
\end{pspicture}
\caption{The multiply connected domains and the buffers}
\label{multiple}
\end{center}
\end{figure}

\subsection{The domains $\Omega_{n,j}$ and the maps $h_{n,j}$}
By straightening the polynomial-like mappings 
\[\rr^{n-1} f:V_{n-1,0} \ra U_{n-1,0},\] 
for $n=2, 3, 4, \dots $, we get $K_1(\eps)$-q.c.\ homeomorphisms $S_{n-1}$ as well as unicritical polynomials 
$\Bf_{c_{n-1}}$, such that 
\begin{gather}
S_{n-1}: (U_{n-1, 0}, V_{n-1, 0}, 0) \ra (\Upsilon^0_{n-1},
\Upsilon^1_{n-1}, 0), and \\
S_{n-1} \circ \rr^{n-1} f= \Bf_{c_{n-1}} \circ S_{n-1}. \notag
\label{stra}
\end{gather}
Note that $\Bf_{c_{n-1}}$ is made unique by the argument in Section \ref{combinatorics}. See Figure~\ref{primitive}.

\begin{remNot}
To make our notations easier to follow, we drop the second subscripts whenever they are zero and do not 
create any confusion. Also, all objects on the dynamic planes of $\Bf_{c_{n-1}}$ and $\Bf_{\tilde{c}_{n-1}}$ 
(the ones after the straightenings) will be denoted by the boldface of the notations used for the 
corresponding objects on the dynamic planes of $f$ and $\tilde{f}$.
\end{remNot}

To define $\Omega_{n-1,j}$ and $h_{n-1,j}$, we need to consider the following three cases:
\begin{enumerate}
\item[$\mathscr{A}$.] $\rr^{n-1}f$ is primitively renormalizable;
\item[$\mathscr{B}$.] $\rr^{n-1}f$ is satellite renormalizable and $\rr^{n} f$ is primitively renormalizable;
\item[$\mathscr{C}$.] both $\rr^{n-1}f$ and $\rr^{n} f$ are satellite renormalizable.
\end{enumerate}

We introduce $\Omega_{n-1,j}$ and $h_{n-1,j}$ in each of the above cases. Then one consecutively applies these 
cases to construct all the domains $\Omega_{n-1,j}$ and the maps $h_{n-1,j}$. 
In what follows, to explain how we choose these cases consecutively, we associate a string of cases to $f$, 
depending only on its combinatorics. \label{cases}
More precisely, for a string of cases $\mathscr{A}^{m_1} \mathscr{B}^{m_2} \mathscr{C}^{m_3} \ldots$,
with non-negative integers $m_j$, obtained for $f$, we repeat Case $\mathscr{A}$, $m_1$ times then 
repeat Case $\mathscr{B}$, $m_2$ times and so on.

Given $f_c$, its renormalization on each level is of primitive or satellite type. 
Therefore, we can associate the string
\begin{equation}\label{word}
P\ldots PS\ldots SP\ldots
\end{equation}
in $P$ and $S$, where $P$ or $S$ in the $i$-th place means that the $i$-th renormalization of $f_c$ is of 
primitive or satellite type, respectively. 
Corresponding to any such string, we define a string of cases 
$\mathscr{A}^{m_1} \mathscr{B}^{m_2} \mathscr{C}^{m_3} \ldots$, with non-negative integers $m_j$, as follows.
Inductively, starting from the left, $P$ is replaced by $\mathscr{A}$, $SP$ by $\mathscr{B}$, and $SS$ by 
$\mathscr{C} S$. 
Repeating this process, we obtain the desired string of cases. 
This also introduces the sequence $n(k)$, for $k=1, 2, 3, \dots$ as follows. 
Given the string \eqref{word}, 
the sequence $n(k)$ is obtained from the sequence of natural numbers $1, 2, 3, \dots$ by removing all the 
integers $l$ for which there is an $S$ in the $(l-1)$-th place and a $P$ in the $l$-th place. 
That is, we skip the level of any primitive renormalization occurring after a satellite one.       
For example: 
\begin{align*}
\text{A string of renormalizations: }  &PSPPSSPSSSPP \dots  \\
\text{Its string of cases: }  &\mathscr{A} \mathscr{B} \mathscr{A} \mathscr{C} \mathscr{B} \mathscr{C} \mathscr{C}
\mathscr{B} \mathscr{A} \dots \\
\text{Its sequence: }  &1,2,4,5,6,8,9,10,12, \dots
\end{align*}

The following lemma shows that there are equipotentials of sufficiently high level $\eta(\eps)$ inside 
$S_{n-1}(W_{n-1,0})$ and $\widetilde{S}_{n-1}(\tilde{W}_{n-1,0})$ in the dynamic planes of the 
maps $\Bf_{c_{n-1}}$ and $\Bf_{\tilde{c}_{n-1}}$.
\begin{lem} \label{equipot}
For every $\eps>0$ and positive integer $d$, there exists  $\eta >0$ such that if $P_{c}:U'\ra U$ is a proper 
unicritical polynomial of degree $d$ with connected Julia set and $\mod (U\setminus U')\geq \eps$, 
then $U'$ contains equipotentials of level less than $\eta$.
\end{lem}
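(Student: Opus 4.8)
The plan is to reduce the statement to a compactness argument in the space of unicritical polynomial-like mappings with a definite modulus bound, using the fact that the "depth" to which an equipotential must descend in order to enter $U'$ is a lower-semicontinuous quantity on that space. First I would reformulate what needs to be shown: if $P_c:U'\to U$ is a proper unicritical polynomial-like map of degree $d$ with connected Julia set and $\mod(U\setminus U')\geq\eps$, then there is an equipotential $E^r$ of the polynomial $P_c$ (in its B\"ottcher coordinate) with $r<e^{\eta}$, say, whose full preimage-structure lands inside $U'$. Concretely, since $J(P_c)$ is connected and $K(P_c)\subset U'$, the B\"ottcher coordinate $B_c$ is defined on all of $\cc\setminus K(P_c)$; the annulus $U\setminus K(P_c)$ contains $U\setminus U'$, which has modulus $\geq\eps$, hence (by the Gr\"otzsch inequality, comparing with the standard round annulus that $B_c$ linearizes) the image $B_c(U\setminus K(P_c))$ contains a round annulus $\{1<|w|<\rho(\eps)\}$ with $\rho(\eps)>1$ depending only on $\eps$. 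Therefore the equipotential $E^{r}$ with $1<r<\rho(\eps)$ is contained in $U$, and its bounded complementary region, which contains $K(P_c)$, must be contained in $U'$ once $r$ is close enough to $1$ — this is where the modulus bound gets used a second time, to guarantee that the equipotential sits on the $K(P_c)$-side of the fundamental annulus.

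The key steps, in order, would be: (1) normalize so that $P_c$ is the genuine polynomial $z\mapsto z^d+c$ with $c\in\M_d$ (using the Straightening Theorem quoted in the excerpt — after straightening, a q.c.\ conjugacy of dilatation bounded in terms of $\eps$ carries $U,U'$ to domains whose moduli are still definite, and equipotentials of the polynomial pull back to equipotentials of the original polynomial-like map); (2) observe that $c$ ranges over the compact set $\M_d$, and that $U'\supset K(P_c)$ with $\mod(U\setminus U')\geq\eps$; (3) for each such configuration, let $\eta(c,U,U')$ be the infimum of $\log r$ over equipotentials $E^r\subset \overline{U'}$ — this is finite and positive; (4) show this quantity is bounded above uniformly. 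For step (4) the cleanest route is the explicit modulus estimate above: $\mod(U\setminus U')\leq\mod(\cc\setminus K(P_c)$ truncated at the smallest equipotential inside $U'$), and the latter annulus is conformally $\{1<|w|<r\}$, of modulus $\frac{1}{2\pi}\log r$; hence $\log r\geq 2\pi\eps$ gives an equipotential strictly inside $U'$, so one may take $\eta=2\pi\eps$ (up to the normalization constant in the definition of modulus), with no compactness needed at all.

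The main obstacle I anticipate is step (1): making precise that "equipotentials pull back to equipotentials" under straightening. A hybrid conjugacy need not be conformal off $K(f)$, so the image of an equipotential of the straightened polynomial is only a q.c.\ curve, not literally a level set of $B_c$ for the polynomial-like map. The fix is to not insist on genuine equipotentials for the polynomial-like map but rather to work directly with the B\"ottcher coordinate $B_f$ of the polynomial-like map itself — which exists near $J(f)$ since $J(f)$ is connected — and to run the Gr\"otzsch argument in the annulus $U\setminus U'\subset\Dom(B_f)$ directly. Then no straightening is needed, and the statement becomes the quantitative assertion that $\mod(U\setminus U')\geq\eps$ forces $B_f(U')$ to contain a definite round neighborhood of the unit circle, so that $U'$ contains the equipotential $E^r$ for every $r$ with $1<\log r<2\pi\eps$; choosing $\eta$ accordingly finishes the proof. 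The only remaining care is to check $B_f$ is defined on all of the relevant annulus, which follows because $U\setminus U'$ is disjoint from $K(f)$ and lies in the domain of the B\"ottcher map once $U$ is shrunk to a fundamental annulus.
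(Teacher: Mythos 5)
There is a genuine gap, and it sits exactly where the dynamics has to enter. Your argument bounds $\mod (U\setminus K(P_c))$ from below (since $U\setminus K(P_c)\supset U\setminus U'$) and correctly deduces via Gr\"otzsch that $U$ contains a definite equipotential; but the lemma asserts this for $U'$, and nothing in your proposal converts the modulus bound on $U\setminus U'$ into a lower bound on $\mod (U'\setminus K(P_c))$. The phrase ``the modulus bound gets used a second time, to guarantee that the equipotential sits on the $K(P_c)$-side of the fundamental annulus'' is precisely the missing step: the hypothesis $\mod (U\setminus U')\geq\eps$ alone says nothing about where $\partial U'$ sits relative to the equipotentials ($U'$ could a priori be an arbitrarily thin neighborhood of $K(P_c)$ sitting inside a huge $U$), so the statement you are implicitly proving --- the one with no dynamical relation between $U'$ and $U$ --- is simply false. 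The paper uses the dynamics at exactly this point: $P_c$ restricted to the component of $P_c^{-1}(U\setminus U')$ surrounding $K(P_c)$ is a degree-$d$ covering of $U\setminus U'$, so that component is an annulus of modulus at least $\eps/d$ contained in $U'$ and separating $K(P_c)$ from $\partial U'$; hence $\mod (U'\setminus K(P_c))\geq\eps/d$, and Gr\"otzsch applied to $B_c(U'\setminus K(P_c))$, an annulus attached to the unit circle, produces a round annulus $D_\eta\setminus D_1$, i.e.\ all equipotentials of level less than $\eta(\eps,d)$, inside $U'$. Your proposal never uses properness of $P_c:U'\ra U$, which is the one hypothesis that makes the lemma true.

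The ``cleanest route'' you offer in step (4) rests on a false inequality. Writing $E^{r^*}$ for the outermost equipotential contained in $U'$, the claim $\mod (U\setminus U')\leq\frac{1}{2\pi}\log r^*$ fails already for $P_0(z)=z^d$ with $U=D_R$ and $U'=D_{R^{1/d}}$: there $\mod (U\setminus U')=\frac{d-1}{2\pi d}\log R$ while $\frac{1}{2\pi}\log r^*=\frac{1}{2\pi d}\log R$, so the inequality is wrong for every $d\geq 3$ (and is an equality, with no room, for $d=2$). The correct comparison must go through $\mod (U'\setminus K(P_c))$, and even then the resulting bound on $\eta$ is not linear in $\eps$, since the Gr\"otzsch step loses more than a constant factor for small moduli --- harmless for the lemma, but your proposed $\eta=2\pi\eps$ is not justified. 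Finally, the concern in your last paragraph about straightening is moot: the lemma is stated for an actual polynomial $P_c$, whose B\"ottcher coordinate is defined on all of $\cc\setminus K(P_c)$, and the assertion that a general polynomial-like map carries a B\"ottcher coordinate ``near $J(f)$'' is not correct as stated.
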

\begin{proof} The map $P_{c}$ on the complement of $K(P_{c})$ is conjugate to the map $P_{0}$ on the complement of 
the closed unit disk $D_1$ by the B\"{o}ttcher coordinate $B_c$. 
Since levels of the equipotentials are preserved under this map, and modulus is a conformal invariant, it is enough 
to prove the lemma for $P_{0}:V' \ra V$, with $V'$ compactly contained in $V$ and 
$\mod (V\setminus V')\geq \eps$. 
As $P_0:P^{-1}_0(V \setminus V') \ra (V\setminus V')$ is a covering of degree $d$, the modulus of the annulus 
$P^{-1}_0(V\setminus V')$ is at least $\eps/d$. 
Hence, $\mod (V' \setminus D_1)$ is at least $\eps/d$. 
By the Gr\"{o}tzsch problem in \cite{Ah66} (Section A in Chapter III) we conclude that $V'\setminus D_1$ 
must contain a round annulus $D_{\eta}\setminus D_1$.
\end{proof}
\medskip
\subsection*{Case $\mathscr{A}$:}
By considering equipotentials of level $\eta(\eps)$ contained in $S_{n-1}$ of $W_{n-1,0}$ and 
$\widetilde{S}_{n-1}$ of $\tilde{W}_{n-1,0}$, obtained in the previous lemma, and the external rays landing at 
the dividing fixed points $\alf_{n-1}$ and $\tilde{\alf}_{n-1}$ of the maps $\Bf_{c_{n-1}}$ and 
$\Bf_{\tilde{c}_{n-1}}$, we form the favorite nest of puzzle pieces \eqref{nest} introduced 
in Section~\ref{favnest}. 
\begin{figure}[ht]
\begin{center}
\includegraphics[scale=.9]{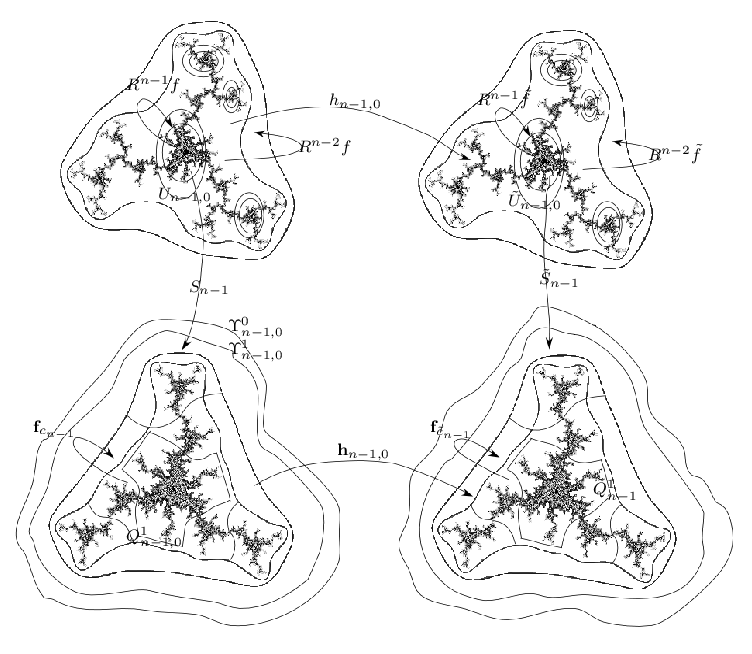}
\caption{Primitive case} \label{primitive}
\end{center}
\end{figure}

Let $Q^{\chi_{n}}_{n,0}:=Y^{q_{\chi_n}}_{n,0}$ and $P^{\chi_{n}}_{n,0}$ denote the last critical puzzle 
pieces  obtained in the nest \eqref{nest}, i.e.\ $\chi_n$ is the smallest positive integer with 
$\Bf_{c_{n-1}}^{t_{n}/t_{n-1}}:P^{\chi_{n}}_{n} \ra Q^{\chi_{n}}_{n}$ is a unicritical polynomial-like mapping of
degree $d$ with connected Julia set. Now, let 
\[\Bh_{n-1}=\Bh_{n-1,0}:\mathbb{C}\ra\mathbb{C},\] 
denote the $K_2$-q.c.\ pseudo-conjugacy one obtains from Theorem~\ref{pseudo-conj}. 
The distance between the parameters $c_{n-1}$ and $\tilde{c}_{n-1}$ with respect to the hyperbolic metric on 
the truncated primary wake containing $c_{n-1}$ and $\tilde{c}_{n-1}$, $W(\eta(\eps))$, is bounded by some 
$M$ depending only on the combinatorial class $\mathcal{SL}$. 
That is because $c_{n-1}$ and $\tilde{c}_{n-1}$ belong to a compact subset of $W(\eta(\eps))$ consisting of a 
finite number of truncated limbs. 
It has been proved in \cite{Ly97}, Theorem I, that $\mod (Q^1\setminus P^1)$ is 
uniformly bounded away from zero depending only on the combinatorial class $\mathcal{SL}$. 
The same proof, based on the continuous dependence of certain rays on the parameter and compactness of the 
class $\mathcal{SL}$, works for the unicritical polynomials as well. 
Therefore, by Proposition~\ref{unifrom-pseudo-conj}, $\Dil (\Bh_{n-1})$ is uniformly bounded by some 
constant $K_2$ depending only on $\eps$ and the class $\mathcal{SL}$. 

For $i=0,1,2,\dots,t_{n}/t_{n-1} -1$, denote the components of the sets $\Bf^{-i}_{c_{n-1}}(Q^{\chi_{n}}_{n,0})$ and 
$\Bf^{-i}_{c_{n-1}}(P^{\chi_{n}}_{n,0})$ containing 
\[\BJ_{1,i}:=\Bf_{c_{n-1}}^{t_n/t_{n-1}-i}(J(\Bf_{c_{n-1}}^{t_n/t_{n-1}}: P_n^{\chi_n}\ra Q_n^{\chi_n}),\] 
 by $Q^{\chi_{n}}_{n,i}$ and $P^{\chi_{n}}_{n,i}$, respectively. 
Note that $t_{n}/t_{n-1}$ is the period of the first renormalization of $\Bf_{c_{n-1}}$.

The polynomials $\Bf_{c_{n-1}}$ and $\Bf_{\tilde{c}_{n-1}}$ also satisfy our combinatorial and 
\textit{a priori} bounds assumptions. Therefore, there is a topological conjugacy $\Bpsi_{n-1}$ between 
them,  obtained from extending $B_{\tilde{c}_{n-1}}^{-1} \circ B_{c_{n-1}}$ onto $J(\Bf_{c_{n-1}})$.  

Now, we adjust $\Bh_{n-1}:Q^{\chi_{n}}_{n} \ra \widetilde{Q}^{\chi_{n}}_{n}$, using the dynamics 
of the maps 
\[\Bf_{c_{n-1}}^{t_{n}/t_{n-1}}:P^{\chi_{n}}_{n} \ra Q^{\chi_{n}}_{n}\quad \text{ and } \quad \Bf_{\tilde{c}_{n-1}}^{t_{n}/t_{n-1}} :\widetilde{P}^{\chi_{n}}_{n} \ra \widetilde{Q}^{\chi_{n}}_{n},\]
so that the equivariance property (for $\Bh_{n-1}$) holds on a larger set.
Let $A^0_n$ denote the closure of the annulus $Q^{\chi_{n}}_{n} \setminus P^{\chi_{n}}_{n}$, and $A^k_n$, 
for $k=1, 2, 3, \dots$, denote the component of $\Bf_{c_{n-1}}^{-k t_{n}/t_{n-1}}(A^0_n)$ around $J_{n,0}$. 
Then, we lift $\Bh_{n-1}$ via $\Bf_{c_{n-1}}^{t_n/t_{n-1}}:Q^{\chi_n}_{n}\ra \cc$ and 
$\Bf_{\tilde{c}_{n-1}}^{t_n/t_{n-1}}: \widetilde{Q}^{\chi_n}_n\ra \cc$ to obtain a $K_2$-q.c.\ 
homeomorphism $g:A^0_n\ra \widetilde{A}^0_n$ which is homotopic to $\Bpsi_{n-1}$ relative $\partial A^0_n$. 
That is because the external rays connecting $\partial P^{\chi_{n}}_{n}$ to $\partial Q^{\chi_{n}}_{n}$, 
partition $A^0_n$ into several topological disks, and  the two maps $\Bpsi_{n-1}$ and $g$ 
coincide on the boundaries of these topological disks.

As $\Bf_{c_{n-1}}^{k t_{n}/t_{n-1}}: A^k_n \ra A^0_n$ and 
$\Bf_{\tilde{c}_{n-1}}^{k t_{n}/t_{n-1}}: \widetilde{A}^k_n \ra \widetilde{A}^0_n$, for every $k\geq1$, 
are holomorphic unbranched coverings, $g$ can be lifted to a $K_2$-q.c.\ homeomorphism from $A^k_n$ to
$\widetilde{A}^k_n$. 
All these lifts are the identity in the B\"{o}ttcher coordinates on the boundaries of 
these annuli. Hence, they glue together to form a q.c.\ homeomorphism 
$g: Q^{\chi_{n}}_{n} \setminus \BJ_{1,0}\to \tilde{Q}^{\chi_{n}}_{n} \setminus \tilde{\BJ}_{1,0}$ which conjugate 
\[\Bf_{c_{n-1}}^{t_n/t_{n-1}}:P^{\chi_{n}}_{n} \setminus \BJ_{1,0}\ra Q^{\chi_{n}}_{n} \setminus \BJ_{1,0} 
\text{, to }\;  \Bf_{\tilde{c}_{n-1}}^{t_n/t_{n-1}}:\widetilde{P}^{\chi_{n}}_{n} \setminus \widetilde{\BJ}_{1,0}
\ra \widetilde{Q}^{\chi_{n}}_{n} \setminus \widetilde{\BJ}_{1,0}.\]

Finally, we would like to extend this map (as a homeomorphism) onto $\BJ_{1,0}$. 
This is a special case of a more general argument presented below.

Given a polynomial $f$ with connected filled Julia set $K(f)$, the \textit{rotation} of angle $\theta$ 
on $\cc\setminus K(f)$ is defined as the rotation of angle $\theta$ in the B\"{o}ttcher coordinate on 
$\cc\setminus K(f)$, that is, $B_c^{-1} (e^{\Bi\theta}\cdot B_c)$. By means of straightening, one can 
define rotations on the complement of the filled Julia set of a polynomial-like mapping. 
It is not canonical as it depends on the choice of the straightening. 
However, its effect on the landing points of external rays is canonical.

\begin{prop}\label{commute} 
Let $f: V_2 \ra V_1$ be a polynomial-like mapping with connected filled Julia set $K(f)$. 
If $\phi: V_1\setminus K(f)\ra V_1\setminus K(f)$ is a homeomorphism which commutes with $f$, then there exists 
a rotation of angle $2\pi j/(d-1)$ for some $j$, denoted by $\rho_j$, such that $\rho_j\circ\phi$ extends onto $K(f)$ as the identity.
\end{prop}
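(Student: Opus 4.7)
The plan is to pass to the B\"ottcher coordinate of $f$ and exploit the resulting functional equation for $\phi$ to pin down its boundary behavior as a rotation by a $(d-1)$-th root of unity.

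First I would invoke the Straightening Theorem to reduce to the case where $f$ is a unicritical polynomial $P_c$ with connected Julia set, and work with its B\"ottcher coordinate $B_c : \cc \setminus K(f) \to \cc \setminus \overline{D}$, which conjugates $f$ to $z \mapsto z^d$. Setting $\Phi := B_c \circ \phi \circ B_c^{-1}$, I obtain a self-homeomorphism of the annular region $B_c(V_1 \setminus K(f))$ satisfying $\Phi(z^d) = \Phi(z)^d$. In log-polar coordinates $w = e^{s+\Bi t}$ with $s > 0$ small and $t \in \mathbb{R}/2\pi\mathbb{Z}$, I would write $\Phi(e^{s+\Bi t}) = e^{S(s,t) + \Bi T(s,t)}$ and lift $T$ to a continuous $\tilde T: (0,\eps) \times \mathbb{R} \to \mathbb{R}$. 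The commutation relation becomes
\begin{equation*}
S(ds, dt) = d\, S(s,t), \qquad \tilde T(ds, dt) = d\, \tilde T(s, t) + 2\pi \ell,
\end{equation*}
where $\ell \in \mathbb{Z}$ is constant by continuity, capturing the monodromy of the lift.

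Next I would identify the particular solutions $S_0 = s$ and $\tilde T_0 = t - 2\pi \ell/(d-1)$, and set $\tau := S - s$ and $\eta := \tilde T - t + 2\pi \ell/(d-1)$, so that both satisfy the homogeneous relation $h(ds, dt) = d\, h(s,t)$. Iterating backwards yields $\tau(s, t) = \tau(d^n s, d^n t)/d^n$; using that $\tau$ is $2\pi$-periodic in $t$ (since $\Phi$ descends to the annulus) and bounded on the compact fundamental annulus $[s_*, d\, s_*] \times [0, 2\pi]$ by some $C$, I would deduce $|\tau(s, t)| \leq C/d^n$ whenever $d^n s \in [s_*, d\, s_*]$. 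Hence $\tau \to 0$ uniformly in $t$ as $s \to 0^+$, and likewise for $\eta$. Therefore $\Phi(w) \to e^{-2\pi \Bi \ell/(d-1)} w$ uniformly as $|w| \to 1^+$.

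Finally, letting $\rho_j$ be the rotation corresponding via $B_c$ to multiplication by $\lambda := e^{2\pi \Bi \ell/(d-1)}$ (so $j \equiv \ell \pmod{d-1}$), the map $\rho_j \circ \phi$ becomes $\lambda\, \Phi$ in B\"ottcher coordinates and tends uniformly to the identity as $|w| \to 1^+$. Pulling back by $B_c^{-1}$, which is continuous on prime ends and honestly continuous up to $\partial D$ when $J(f)$ is locally connected (the setting in which this proposition is applied), I would conclude that $\rho_j \circ \phi$ extends continuously across $J(f) = \partial K(f)$ as the identity; continuity at interior points of $K(f)$ is automatic, since no sequence in $V_1 \setminus K(f)$ can converge to such a point. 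The hard part will be the periodicity-and-compactness step in the middle paragraph that upgrades the homogeneous functional equation to a uniform boundary limit, thereby forcing the boundary behavior to be a $(d-1)$-th root of unity rotation rather than some more general homeomorphism of $\partial D$ merely commuting with $\theta \mapsto d\theta$.
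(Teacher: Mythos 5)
Your route through the B\"ottcher coordinate and the functional equation $\Phi(z^d)=\Phi(z)^d$ is genuinely different from the paper's. The paper argues geometrically: it takes an external ray $R$ landing at a non-dividing fixed point $\beta_0$, observes that $\phi(R)$ is again $f$-invariant and therefore lands at some $\beta_j$, sets $\psi=\rho_j\circ\phi$ and $R'=\psi(R)$, and shows that the quadrilaterals $I_{n,j}$, $I'_{n,j}$ cut from the fundamental annulus by $R$ and $R'$ have bounded hyperbolic diameter in $V_1\setminus K(f)$ and hence Euclidean diameter $\to 0$; since $\psi$ takes $I_{n,j}$ to $I'_{n,j}$ and the two families interleave at bounded hyperbolic distance, $|\psi(w)-w|\to 0$ as $w\to K(f)$. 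Your monodromy computation reaches the same rotation $\lambda=e^{2\pi\Bi\ell/(d-1)}$ analytically, and the backward-iteration bound on $\tau,\eta$ actually yields the sharper quantitative conclusion $|\lambda\Phi(w)-w|=O(|w|-1)$ without any ray-chasing.

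There is, however, a genuine gap at the end. You appeal to continuity of $B_c^{-1}$ up to $\partial D$, i.e.\ to local connectivity of $J(f)$; that is not a hypothesis of the proposition, and the paper's proof does not use it. Your parenthetical ``the setting in which this proposition is applied'' is true of the paper's applications but leaves the stated proposition unproved. The repair is exactly the hyperbolic-geometry step the paper already performs: your estimate $|\lambda\Phi(w)-w|=O(|w|-1)$ says $w$ and $\lambda\Phi(w)$ stay at bounded hyperbolic distance in the annulus $\{1<|z|<r\}$; transporting by the conformal (hence hyperbolic-isometric) map $B_c^{-1}$, the points $z$ and $\rho_j\circ\phi(z)$ stay at bounded hyperbolic distance in $V_1\setminus K(f)$; and because the hyperbolic density there is comparable to $1/\operatorname{dist}(\cdot,\partial)$, any set of bounded hyperbolic diameter meeting the $\eps$-neighborhood of $K(f)$ has Euclidean diameter $O(\eps)$. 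This gives $|\rho_j\circ\phi(z)-z|\to 0$ as $z\to K(f)$ with no local-connectivity assumption. Two smaller points worth making explicit: the argument (yours and the paper's) tacitly assumes $\phi$ is orientation-preserving, needed for the degree-$+1$ action on $\pi_1$ of the annulus; and the constancy of the monodromy integer $\ell$ should be justified by the connectedness of the strip $\{0<s<(\log r)/d\}$ on which both sides of the lifted relation are defined.
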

A proof of this proposition is given in the Appendix.

Applying the above proposition to ${\Bpsi}_{n-1}^{-1} \circ g$ with $V_1=Q^{\chi_{n}}_{n}$, $V_2=P^{\chi_{n}}_{n}$, 
and an external ray connecting $\partial Q^{\chi_{n}}_{n}$ to $\BJ_{1,0}$, we conclude that $g$ extends onto $\BJ_{1,0}$ 
as $\Bpsi_{n-1}$. 
Also, it follows from the proof of the above proposition that $g$ and $\Bpsi_{n-1}$ are homotopic relative 
$\BJ_{1,0}\cup \partial Q^{\chi_{n}}_{n}$. That is because the quadrilaterals obtained in the proof cut the 
puzzle piece $Q^{\chi_{n}}_{n}$ into infinite number of topological disks such that $g$ and $\Bpsi_{n-1}$ are equal on 
the boundaries of these topological disks.

Similarly, $\Bh_{n-1}$ can be adjusted on the other puzzle pieces $Q^{\chi_{n}}_{n,i}$, for $i=1$, $2$, $\dots$, 
$t_n/t_{n-1}$, so that it is homotopic to $\Bpsi_{n-1}$ restricted to $Q^{\chi_{n}}_{n,i}$, 
relative $\BJ_{1,i}\cup\partial Q^{\chi_{n}}_{n,i}$. We denote the map obtained from extending $\Bh_{n-1}$ onto little Julia 
sets $\BJ_{1,i}$ with the same notation $\Bh_{n-1}$.

Finally, we need to prepare $\Bh_{n-1}$ for the next step (case) of the process. It is stated in the following lemma.   
\begin{lem} \label{adjusted}
The $K_2$-q.c.\ homeomorphism $\Bh_{n-1}$ can be adjusted on a neighborhood of $\cup_i \BJ_{1,i}$, through a homotopy relative $\cup_i \BJ_{1,i}$, 
 to a q.c.\ homeomorphism $\Bh'_{n-1}$ which maps 
$\BV_{n,i}:= S_{n-1}(V_{n, i})$ onto $\widetilde{\BV}_{n,i}:=\widetilde{S}_{n-1}(\widetilde{V}_{n,i})$. 
Moreover, $\Dil (\Bh'_{n-1})$ is uniformly bounded by a constant $K_3$ depending only on $\eps$.
\end{lem}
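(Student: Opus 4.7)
The plan is to work separately on each little Julia set $\BJ_{1,i}$; in the primitive case (Case $\mathscr{A}$) these are pairwise disjoint by the well-apart property recorded above, so disjoint modifications suffice, and in the satellite case a single modification near the $\alpha$-cycle handles all $\BJ_{1,i}$ simultaneously. The strategy is to pre-compose $\Bh_{n-1}$, on a topological disk neighborhood of each $\BJ_{1,i}$, with a q.c.\ homeomorphism that fixes $\BJ_{1,i}$ and the outer boundary of the neighborhood pointwise, but carries $\Bh_{n-1}(\partial\BV_{n,i})$ onto $\partial \widetilde{\BV}_{n,i}$.

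First I would collect the relevant moduli. By \eqref{const-eta}, $\eps \le \mod(\BU_{n,i}\setminus\BV_{n,i})\le \eta$ and the same inequalities hold for the tilde version; in particular these annuli have moduli bounded \emph{both} above and below by constants depending only on $\eps$. Since $\Bh_{n-1}$ is $K_2$-q.c., the image annulus $\Bh_{n-1}(\BU_{n,i}\setminus\BV_{n,i})$ has modulus in a comparable range. Thus all three of
\[\BU_{n,i}\setminus\BV_{n,i},\quad \widetilde{\BU}_{n,i}\setminus\widetilde{\BV}_{n,i},\quad \Bh_{n-1}(\BU_{n,i}\setminus\BV_{n,i})\]
separate the corresponding little Julia set from a definite outer region, with moduli lying in a fixed compact interval depending only on $\eps$, $K_2$ and the constant $M$ of \eqref{const-M}.

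Next I would carry out an annulus interpolation inside $\widetilde{\BU}_{n,i}$. Both Jordan curves $\Bh_{n-1}(\partial\BV_{n,i})$ and $\partial\widetilde{\BV}_{n,i}$ surround $\tilde{\BJ}_{1,i}$ with definite-modulus annular shells on both sides, by the previous step. A standard q.c.\ interpolation -- e.g.\ uniformizing the annulus between $\tilde{\BJ}_{1,i}$ and $\partial\widetilde{\BU}_{n,i}$ to a round annulus, straightening the two embedded core curves to round circles of comparable radii, and interpolating radially -- yields a q.c.\ homeomorphism $\phi_i$ supported inside $\widetilde{\BU}_{n,i}$, equal to the identity on $\tilde{\BJ}_{1,i}$ and on $\partial\widetilde{\BU}_{n,i}$, sending $\Bh_{n-1}(\partial\BV_{n,i})$ onto $\partial\widetilde{\BV}_{n,i}$, with dilatation depending only on the modulus bounds of the previous step; the same construction supplies an isotopy $\phi_{i,t}$ of $\phi_i$ to the identity through such maps. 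Setting $\Bh'_{n-1}:=\phi_i\circ\Bh_{n-1}$ on each $\BU_{n,i}$ and $\Bh'_{n-1}:=\Bh_{n-1}$ elsewhere glues to a global q.c.\ homeomorphism with $\Dil(\Bh'_{n-1})\le K_2\cdot\max_i\Dil(\phi_i)=:K_3(\eps)$, and $\Bh_{n-1,t}:=\phi_{i,t}\circ\Bh_{n-1}$ provides the homotopy rel $\cup_i\BJ_{1,i}$.

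The main obstacle is the interpolation step -- producing $\phi_i$ with dilatation controlled \emph{solely} in terms of $\eps$. The subtlety is that each separating modulus must be bounded \emph{above} as well as below; otherwise, if two annular shells had wildly different moduli, no q.c.\ map between them could be produced with dilatation independent of $n$. This is precisely why the upper bound in \eqref{const-eta} and the two-sided comparability \eqref{const-M} were arranged at the outset, and why the construction now goes through uniformly in $n$ and $i$.
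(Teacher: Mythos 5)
Your overall strategy — modify $\Bh_{n-1}$ only near each little Julia set by composing with a q.c.\ map supported in $\widetilde{\BU}_{n,i}$, identity on $\widetilde{\BJ}_{1,i}$ and on $\partial\widetilde{\BU}_{n,i}$, with dilatation controlled by the two-sided modulus bounds — is the right idea and close in spirit to the paper's. (Minor slip: you write ``pre-compose'' but your formula $\phi_i\circ\Bh_{n-1}$ is a post-composition.)

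The gap is in the interpolation step. You propose to uniformize $\widetilde{\BU}_{n,i}\setminus\widetilde{\BJ}_{1,i}$ to a round annulus and then ``straighten the two embedded core curves to round circles of comparable radii, and interpolate radially.'' This works if the two Jordan curves $\Bh_{n-1}(\partial\BV_{n,i})$ and $\partial\widetilde{\BV}_{n,i}$ are disjoint, but nothing forces this; each merely surrounds $\widetilde{\BJ}_{1,i}$ with definite-modulus shells, and in general they cross. Two crossing essential curves in a round annulus cannot simultaneously be taken to round circles by one uniformizing map, so ``interpolating radially'' between them has no direct meaning. This is precisely the issue the paper's proof confronts: it first exhibits an intermediate topological disk $\widetilde{\BL}_{n,i}$ with $\widetilde{\BJ}_{1,i}\subset\widetilde{\BL}_{n,i}\subset\Bh_{n-1}(\BV_{n,i})\cap\widetilde{\BV}_{n,i}$ and definite modulus on all three separating annuli (such a disk exists because both bigger disks carry a definite shell around $\widetilde{\BJ}_{1,i}$, so their intersection does too), then performs the homotopy in \emph{two} stages through the nested configurations $(\Bh_{n-1}(\BU_{n,i}),\Bh_{n-1}(\BV_{n,i}),\widetilde{\BL}_{n,i},\widetilde{\BJ}_{1,i})$ and $(\widetilde{\BU}_{n,i},\widetilde{\BV}_{n,i},\widetilde{\BL}_{n,i},\widetilde{\BJ}_{1,i})$, each uniformized to $(D_5,D_3,D_2,D_1)$, with explicit radial formulas. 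Your argument can be repaired by introducing such an intermediate curve and composing two interpolations (first push $\Bh_{n-1}(\partial\BV_{n,i})$ inward to $\partial\widetilde{\BL}_{n,i}$, then outward to $\partial\widetilde{\BV}_{n,i}$), but as written the one-step ``standard interpolation'' is not justified when the curves are not nested. The same two-step structure is also what makes the claimed isotopy $\phi_{i,t}$ to the identity rel $\widetilde{\BJ}_{1,i}$ explicit rather than hand-waved.
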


\begin{proof}
The basic idea is to continuously move the domain $\Bh_{n-1}(\BV_{n,i})$ close enough to $J_{1,i}$, and then move it
back to $\widetilde{\BV}_{n,i}$ (simultaneously for all $i=0,1,\dots, t_n/t_{n-1}$). 
We will do this more precisely below. Let $\BU_{n,i}$ denote $S_{n-1}(U_{n,i})$ and $\widetilde{\BU}_{n,i}$ denote $\widetilde{S}_{n-1}(\widetilde{U}_{n,i})$.
 
The annuli $\Bh_{n-1}(\BV_{n,i})\setminus \widetilde{\BJ}_{1,i}$ and $\widetilde{\BV}_{n,i}\setminus \widetilde{\BJ}_{1,i}$ have moduli bigger than $\eps/dK_1K_2$, where $K_1:=\Dil (S_{n-1})$ and $K_2:=\Dil (\Bh_{n-1})$. 
Therefore, there exist topological disks $\widetilde{\BL}_{n,i}\supseteq \widetilde{\BJ}_{1,i}$, with 
smooth boundaries, and a constant $r>0$ satisfying the following properties 
\begin{itemize}
\item[--] $\widetilde{\BL}_{n,i} \subset \Bh_{n-1}(\BV_{n,i}) \cap \widetilde{\BV}_{n,i}$,
\item[--] $\mod (\widetilde{\BL}_{n,i}\setminus \widetilde{\BJ}_{1,i}) \geq r$,
\item[--] $\mod (\widetilde{\BV}_{n,i}\setminus \widetilde{\BL}_{n,i}) \geq \eps/2dK_1K_2$, and  
\item[--] $\mod (\Bh_{n-1}(\BV_{n,i})\setminus \widetilde{\BL}_{n,i}) \geq \eps/2dK_1K_2$.
\end{itemize}

Now, there are q.c.\ homeomorphisms 
\[\chi_i:\big(\Bh_{n-1}(\BU_{n,i}),\Bh_{n-1}(\BV_{n,i}),\widetilde{\BL}_{n,i}, \BJ_{1,i}\big)\ra \big(D_5, D_3,D_2,D_1\big),\]
with uniformly bounded dilatations. 
That is because the annuli 
\[\widetilde{\BL}_{n,i}\setminus \widetilde{\BJ}_{1,i}, \Bh_{n-1}(\BV_{n,i}) \setminus \widetilde{\BL}_{n,i}, \text{ and } \Bh_{n-1}(\BU_{n,i})\setminus\Bh_{n-1}(\BV_{n,i})\] 
have moduli uniformly bounded from above and away from zero independent of $n$ and~$i$. 

The homotopy \text{$g_t\colon\Dom (\Bh_{n-1})\ra \cc$}, for $t\in [0,1]$, is defined as 
\[\begin{cases} \Bh_{n-1}(z) &\text{ if $z\notin \bigcup_i \BV_{n,i}$}\\
                 \chi_i^{-1}\Big((\frac{-t}{3}\sin \frac{(|\chi_i \circ \Bh_{n-1}(z)|-1)\pi}{4}+1)\cdot \chi_i \circ \Bh_{n-1}(z)\Big) & \text{ if $z\in \BV_{n,i}.$} 
\end{cases}\]
It is straight to see that $g_0=\Bh_{n-1}$ on $\Dom \Bh_{n-1}$, $g_t$ is a well defined homeomorphism for every fixed $t$, and $g_t$ depends continuously on $t$ for every fixed $z$. For every $z\in \partial \BV_{n,i}$, at time $t=1$, we have  $g_1(z)=\chi_i^{-1}(\frac{2}{3}\cdot \chi_i\circ \Bh_{n-1}(z))\in \partial \widetilde{\BL}_{n,i}$. That is, $g_1$ maps $\partial V_{n,i}$ to $\partial \widetilde{\BL}_{n,i}$. 

For the other part, we consider q.c.\ homeomorphisms  
\[\Theta _i:\big(\widetilde{\BU}_{n,i},\widetilde{\BV}_{n,i},\widetilde{\BL}_{n,i},\widetilde{\BJ}_{1,i}\big)\ra\big(D_5, D_3,D_2,D_1\big),\]
and define $g_{t+1}:\Dom \Bh_{n-1}\ra \cc$, for $t\in [0,1]$, as      
\[\begin{cases} g_1(z) &\text{ if $z \notin g_1^{-1}(\bigcup_i \widetilde{\BU}_{n,i})$}\\
                 \Theta_i^{-1}\Big((\frac{t}{\sqrt 2}\sin \frac{(|\Theta_i \circ g_1(z)|-1)\pi}{4}+1)\cdot \Theta_i \circ g_1(z)\Big) &\text{ if $z\in g_1^{-1}(\widetilde{\BU}_{n,i}).$} 
   \end{cases}\]
The homotopy $g_t$ for $t\in[0,2]$ is the desired adjustment. The map $g_2:\Dom \Bh_{n-1}\ra \text{Range } \Bh_{n-1}$, is denoted by $\Bh'_{n-1}.$ 
\end{proof}

Let $\Delta_{n-1,0}$ denote the $S_{n-1}$-preimage of the domain bounded by $E^{\eta(\eps)}$ in the 
dynamic plane of $\Bf_{c_{n-1}}$. The domain $\Omega_{n-1,0}$ is defined as
\[\Delta_{n-1,0}\setminus \bigcup_{i=0}^{t_{n}/t_{n-1}} V_{n,i t_{n-1}}.\] 
The domains $\Delta_{n-1,i}$ and $\Omega_{n-1,i}$, for $i=1,2,\dots,t_{n-1}$, are defined as the pullback of $\Delta_{n-1,0}$ and $\Omega_{n-1,0}$, respectively, under $f^{-i}$ along the orbit of the critical point. 

Consider the map  
\begin{equation}\label{h'_{n-1}}
h_{n-1,0}:=\widetilde{S}_{n-1}^{-1} \circ \Bh'_{n-1} \circ S_{n-1}:\Delta_{n-1,0}\ra \widetilde{\Delta}_{n-1,0},
\end{equation}
and then, 
\begin{equation}\label{h'_{n-1,i}}
h_{n-1,i}:=\tilde{f}^{-i} \circ h_{n-1,0} \circ f^{i}:\Delta_{n-1,i}\ra \widetilde{\Delta}_{n-1, i},
\end{equation}
for the appropriate choice of the inverse branch of $f^i$. 
As these maps are compositions of two $K_1(\eps)$-q.c.\ \!\! and a $K_3(\eps)$-q.c.\ (and possibly some conformal  maps), they are q.c.\ with uniformly bounded dilatations. By our adjustment in Lemma~\ref{adjusted}, we know that $h_{n-1,i}$ maps $\Omega_{n-1,i}$ onto $\widetilde{\Omega}_{n-1,i}$. 

Finally, the annulus $V_{n-1,0}\setminus W_{n-1,0}$, with modulus bigger than $\eps/d$, encloses $\Omega_{n-1,0}$ and is contained in $V_{n-1, 0}$. This proves that $\Omega_{n-1,0}$ is well inside the disk $D_{n-1,0}:=V_{n-1,0}$. Similarly, the appropriate preimage of $V_{n-1,0}\setminus W_{n-1,0}$ under the conformal map $f^{i}$ introduces a definite annulus around $\Omega_{n-1,i}$ which is contained in $D_{n-1,i}:=V_{n-1,i}$. In this case, $D_{n,i}$ is  defined as $V_{n,i}$ which contains $J_{n,i}$ well inside itself.  
\medskip

\subsection*{Case $\mathscr{B}$:}
Here $\Bf_{c_{n-1}}$ is satellite renormalizable and its second renormalization is of primitive type. Let $\alf_{n-1}$ denote the dividing fixed point of $\Bf_{c_{n-1}}$, and $\alf_n\in \BJ_{1,0}:=J(\rr\Bf_{c_{n-1}})$ denote the dividing fixed point of $\rr\Bf_{c_{n-1}}$. By definition, the little Julia sets $\BJ_{1,i}$, $i=1,2,\dots, t_n/t_{n-1}$, of $\Bf_{c_{n-1}}$ touch each other at the $\alf_{n-1}$ fixed point. Here, 
$J(\rr^2 \Bf_{c_{n-1}})$, and its forward images under iterates of $\Bf_{c_{n-1}}$ can be arbitrarily close to $\alf_{n-1}$ (which is a non-dividing fixed point of $\rr \Bf_{c_{n-1}}$). Our idea is to skip the satellite level and start with the primitive one. This essentially imposes the $\mathcal{SL}$ condition on us. 

Consider an equipotential $E^{\eta(\eps)}$ contained in $S_{n-1}(W_{n-1,0})$, the external rays landing at $\alf_{n-1}$, and the external rays landing at the $\Bf_{c_{n-1}}$-orbit of $\alf_{n}$ (see Figure~\ref{satelite}).  Let us denote $\Bf_{c_{n-1}}^{t_n/t_{n-1}}$ by $g$, for the 
simplicity of the notation, throughout this case. 
\begin{figure}[ht]
\begin{center}
  \begin{pspicture}(-4.28,-3.56)(3.74,4.44)
  \psset{xunit=1cm}
  \psset{yunit=1cm}
  \epsfxsize=7.6cm
  \rput(-.34,.36){\epsfbox{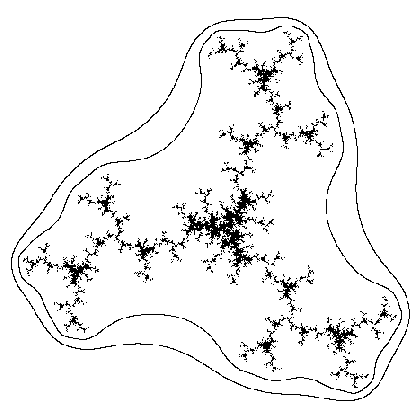}}
  \SpecialCoor  
  \pscurve[linewidth=.7pt](2.55;20)(2.0;28)(1.7;60)(2.05;64)
  \pscurve[linewidth=.7pt](3.3;51)(2.8;54)(2.2;59)(2.05;64)
  \pscurve[linewidth=.7pt](2.8;110)(2.4;95)(2.2;69)(2.05;64)
  \pscurve[linewidth=.7pt](2.55;140)(2.0;148)(1.7;180)(2.05;184)
  \pscurve[linewidth=.7pt](3.3;171)(2.8;174)(2.2;179)(2.05;184)
  \pscurve[linewidth=.7pt](2.8;230)(2.4;215)(2.2;189)(2.05;184)
  \pscurve[linewidth=.7pt](2.55;260)(2.0;268)(1.7;300)(2.05;304)
  \pscurve[linewidth=.7pt](3.3;291)(2.8;294)(2.2;299)(2.05;304)
  \pscurve[linewidth=.7pt](2.8;350)(2.4;335)(2.2;309)(2.05;304)
  \pscurve[linewidth=.7pt,showpoints=false](2;20)(1.4;40)(1.;59)(1.05;75)
  \pscurve[linewidth=.7pt,showpoints=false](2;115)(1.25;83)(1.05;75)
  \pscurve[linewidth=.7pt,showpoints=false](2;141)(1.4;160)(1.;179)(1.05;195)
  \pscurve[linewidth=.7pt,showpoints=false](2;235)(1.25;203)(1.05;195)
  \pscurve[linewidth=.7pt,showpoints=false](2;261)(1.4;280)(1.;299)(1.05;315)
  \pscurve[linewidth=.7pt,showpoints=false](2;355)(1.25;323)(1.05;315)
  \pscurve[linewidth=.7pt,showpoints=false](1.15;355)(.62;350)(.71;7)   

  \pscurve[linewidth=.7pt,showpoints=false](1.3;17)(.9;20)(.71;7)   

  \pscurve[linewidth=.7pt,showpoints=false](1.3;17)(1.4;22)(1.5;27)   

  \pscurve[linewidth=.7pt,showpoints=false](1.5;27)(1.3;37)(.92;55)(.9;80)   

  \pscurve[linewidth=.7pt,showpoints=false](1.2;110)(1;100)(.9;80)      

  \pscurve[linewidth=.7pt,showpoints=false](1.15;355)(1.16;352)(1.2;350)      
  \pscurve[linewidth=.7pt,showpoints=false](1.15;115)(.62;110)(.71;127)   

  \pscurve[linewidth=.7pt,showpoints=false](1.3;137)(.9;140)(.71;127)   

  \pscurve[linewidth=.7pt,showpoints=false](1.3;137)(1.4;142)(1.5;147)   

  \pscurve[linewidth=.7pt,showpoints=false](1.5;147)(1.3;157)(.92;175)(.9;200)   

  \pscurve[linewidth=.7pt,showpoints=false](1.2;230)(1;220)(.9;200)      

  \pscurve[linewidth=.7pt,showpoints=false](1.15;115)(1.16;112)(1.2;110)      

  \pscurve[linewidth=.7pt,showpoints=false](1.15;235)(.62;230)(.71;247)   

  \pscurve[linewidth=.7pt,showpoints=false](1.3;257)(.9;260)(.71;247)   

  \pscurve[linewidth=.7pt,showpoints=false](1.3;257)(1.4;262)(1.5;267)   

  \pscurve[linewidth=.7pt,showpoints=false](1.5;267)(1.3;277)(.92;295)(.9;320)   

  \pscurve[linewidth=.7pt,showpoints=false](1.2;350)(1;340)(.9;320)      

  \pscurve[linewidth=.7pt,showpoints=false](1.15;235)(1.16;232)(1.2;230)      

  \pscurve{->}(-1.3,3.1)(-1.1,2.9)(-1,2.7)
  \rput(-3.2,3.3){Ray landing at $\alpha_{n-1}$}
  \pscurve{->}(-2,2.2)(-1.5,1.5)(-1,.6) 
  \rput(-2.3,2.4){$Q^{\chi_1}_n$}
  \rput(-.4,2){$A^0_1$}
  \rput(-.8,1.3){$C_0^0$}
  \rput(-1.7,-1.3){$B^0_1$}
  \rput(1.8,-.9){$A^0_2$}
  \pscurve{->}(-1.2,-2.7)(-1.1,-2.3)(-1.1,-1.7)
  \rput(-2.5,-2.9){Ray landing at $\alpha_n$}
  \pscurve{->}(2.7,.5)(3.4,.8)(2.9,.9)
  \rput(3.6,.3){$\mathbf{f}_{c_{n-1}}$}
 \end{pspicture}

 \caption[An infinitely renormalizable Julia set]{The figure shows an infinitely renormalizable Julia set. The first renormalization is of satellite type and the second one is of primitive type. The puzzle piece $Q^{\chi_1}_n$ at the center is the first puzzle piece in the favorite nest.}
\label{satelite}
\end{center}
\end{figure}

Let $Y^1_0$, as before, denote the critical puzzle piece of level one bounded by $E^{\eta(\eps)}$, the external rays landing at $\alf_{n-1}$, and the external rays landing at the points in $\Bf_{c_{n-1}}^{-1}(\alf_{n-1})$. 
The external rays landing at $\alf_n$ and their $g$-preimage, cut the puzzle piece $Y^1_0$ into finitely many pieces. Let us denote the one containing the critical point by $C^0_0$, the non-critical ones with $\alf_n$ on their boundary by $B^0_i$, and the rest of them by $A^0_j$ (these ones have an $\omega \alf_n$ on their boundary, with  
$\omega^d=1$ but $\omega\neq 1$). 

The $g$-preimage of $Y_0^1$ along the post-critical set is contained in itself. As all the processes
of making the modified principal nest and the pseudo-conjugacy in Theorem \ref{pseudo-conj} are based on 
pullback arguments, the same ideas, which is explained briefly below, work here as well. 
The only difference is that we do not have any equipotential for the second renormalization of $\Bf_{c_{n-1}}$. 
However, certain external rays and part of $E^{\eta(\eps)}$ can play the role of an equipotential for 
$\rr^2 (\Bf_{c_{n-1}})$.

By definition of satellite and primitive renormalizability, $g^n(0)$ belongs to $Y_0^1$, for $n \geq 0$, and there is a smallest positive integer $t$ with $g^t(0)\in A^0_1$ (by rearranging the indices if required). 
Pulling $A^0_1$ back under $g^{t}$, along $0,g(0), \dots ,g^t(0)$, we obtain a puzzle piece $Q^{\chi_1}_n\ni 0$, such that $C^0_0\setminus Q^{\chi_1}_n$ is a non-degenerate annulus. That is because $C^0_0$ is bounded by the external rays landing at $\alf_n$ and their $g$-preimage. Therefore, if $Q^{\chi_1}_n$ intersects $\partial C^0_0$ at some point on the rays, orbit of this intersection under $g^k$, for $k\geq 1$, stays on the rays landing at $\alf_n$. This implies that image of $Q^{\chi_1}_n$ can never be $A^0_1$. Also, they do not intersect at equipotentials with  different levels. 

Now, let $m>t$ be the smallest positive integer with $g^{m}(0)\in Q^{\chi_1}_n$. Pulling $Q^{\chi_1}_n$ back under $g^{m}$, along the critical orbit, we obtain $P^{\chi_1}_n$. The map $g^{m}$ is a unicritical degree $d$ branched covering from $P^{\chi_1}_n$ onto $Q^{\chi_1}_n$. This introduces the first two pieces in the  favorite nest. The rest of the process to form the whole favorite nest is the same as in Section~\ref{favnest}. 

Consider the maps $\Bf_{c_{n-1}}^{t_n/t_{n-1}}:Y_0^1\ra \Bf_{c_{n-1}}^{t_n/t_{n-1}}(Y_0^1)$, and the corresponding tilde one. One applies Theorem~\ref{pseudo-conj} to these maps, using the favorite nests introduced in the above paragraph, to obtain a q.c.\ pseudo-conjugacy 
\[\Bh_{n-1}:\Bf_{c_{n-1}}^{t_n/t_{n-1}}(Y_0^1)\ra \Bf_{\tilde{c}_{n-1}}^{t_n/t_{n-1}}(\widetilde{Y}_0^1),\]
up to level of $Q^{\chi_n}_n$.
The equipotential $E^{\eta(\eps)}$, the external rays landing at $\alf_{n-1}$, and the external rays landing at the $\Bf_{c_{n-1}}$-orbit of $\alf_{n}$ depend holomorphically on the parameter within the secondary wake $W(\eta)$ containing the parameter $c_{n-1}$. The hyperbolic distance between $c_{n-1}$ and $\tilde{c}_{n-1}$ within one of the finitely many secondary wakes $W(\eta)$ is uniformly bounded in terms of the combinatorial class $\mathcal{SL}$.
Also, $\mod (Q^{\chi_1}_n\setminus P^{\chi_1}_n)$ is bounded away from zero for the parameters in these secondary limbs. Thus, by Proposition~\ref{unifrom-pseudo-conj}, $\Dil (\Bh_{n-1})$ depends only on the \textit{a priori} bounds $\eps$ and the combinatorial class $\mathcal{SL}$.  

As $\Bf_{c_{n-1}}^j:\Bf_{c_{n-1}}^{t_n/t_{n-1}-j}(Y_0^1)\ra \Bf_{c_{n-1}}^{t_n/t_{n-1}}(Y_0^1)$, for $j=1,2,\dots,t_n/t_{n-1}-1$, is univalent, we can lift $\Bh_{n-1}$ onto other puzzle pieces as 
\[\Bh_{n-1}:=\Bf_{\tilde{c}_{n-1}}^{-j} \circ \Bh_{n-1} \circ \Bf_{c_{n-1}}^j:\Bf_{c_{n-1}}^{t_n/t_{n-1}-j}(Y_0^1)\ra \Bf_{\tilde{c}_{n-1}}^{t_n/t_{n-1}-j}(\widetilde{Y}_0^1)\] 
for these values of $j$. Since all these maps match the B\"ottcher marking, they glue together to build a q.c.\ homeomorphism from a neighborhood of $J(\Bf_{c_{n-1}})$ to a neighborhood of $J(\Bf_{\tilde{c}_{n-1}})$. Then, one extends this map, as the identity in the B\"ottcher coordinates, to a q.c.\ homeomorphism from the domain bounded 
by $E^{\eta(\eps)}$ to the domain bounded by $\widetilde{E}^{\eta(\eps)}$.    

Finally, by Proposition \ref{commute} and Lemma~\ref{adjusted}, we adjust $\Bh_{n-1}$ to obtain a
q.c.\ homeomorphism $\Bh'_{n-1}$ that satisfies 
\[\Bh'_{n-1}(\Bf_{c_{n-1}}^i(J(\rr^2 (\Bf_{c_{n-1}}))))=\Bf_{\tilde{c}_{n-1}}^i(J(\rr^2 (\Bf_{\tilde{c}_{n-1}}))),\]
and 
\[\Bh'_{n-1}(S_{n-1}(V_{n+1,i t_{n-1}}))\!=\!\widetilde{S}_{n-1}(\widetilde{V}_{n+1, it_{n-1}}),\]
for  $i=0, 1,2,\dots, t_{n+1}/t_{n-1}-1.$ 

Now, $\Delta_{n-1,0}$ is defined as $S_{n-1}$-pullback of the domain bounded by $E^{\eta(\eps)}$. 
The domain $\Omega_{n-1, 0}$ is
\[\Delta_{n-1,0}\setminus \bigcup_{i=0}^{t_{n+1}/t_{n-1}-1} V_{n+1,it_{n-1}}.\] 
The regions $\Delta_{n-1,i}$ and $\Omega_{n-1,i}$, for $i=1,2,3,\dots,t_{n-1}$,  are the appropriate pullbacks of $\Delta_{n-1,0}$ and $\Omega_{n-1, 0}$ under $f^i$, respectively.
Like previous case, $h_{n-1,i}$ is defined as in Equation \ref{h'_{n-1}} or \ref{h'_{n-1,i}} and satisfies 
\[h_{n-1,i}(\Omega_{n-1,i})=\widetilde{\Omega}_{n-1,i}, \text{ for } i=1,2,\dots, t_{n-1}.\]
For the same reason as in Case $\mathscr{A}$, $\Omega_{n-1,i}$ is well inside $D_{n-1,i}:=V_{n-1,i}$, and
$J_{n+1,i}$ is well inside $D_{n+1,i}:=V_{n+1,i}$.
\smallskip

\subsection*{Case $\mathscr{C}$:}
Here, $J(\rr(\Bf_{c_{n-1}}))$ is denoted by $\BJ_{1,0}$, and $\Bf^{t_n/t_{n-1}-i}_{c_{n-1}}(\BJ_{1,0})$, 
for $i=1,2,\dots$, $\frac{t_n}{t_{n-1}}-1$, is denoted by $\BJ_{1,i}$. 
These little Julia sets touch each other at the dividing fixed point $\alf_{n-1}$ of $\Bf_{c_{n-1}}$. Note that $\alf_{n-1}$
is one of the non-dividing fixed points of $\rr\Bf_{c_{n-1}}$. The union of these little Julia sets is called the  \textit{Julia bouquet} and is denoted by $\BB_{1,0}$. 
\begin{figure}[ht]
\begin{center}
\includegraphics[scale=.5]{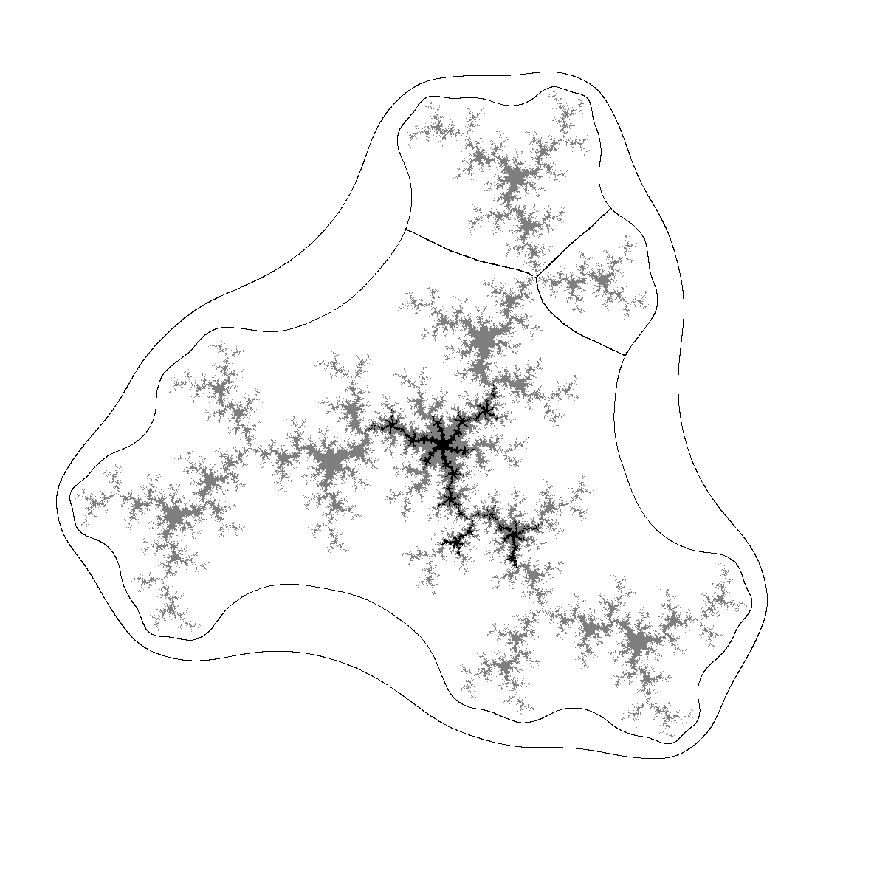}
\caption[A twice satellite renormalizable Julia set]{A twice satellite renormalizable Julia set drawn in grey. The dark part is the Julia 
Bouquet $\BB_{2,0}$.} \label{satsat}
\end{center}
\end{figure}

Similarly, $J(\rr^2(\Bf_{c_{n-1}}))$, and its forward iterates under $\Bf_{c_{n-1}}$ are denoted by $\BJ_{2,i}$, for $i=0,1,\dots,(t_{n+1}/t_{n-1})-1$,(i.e.\ $\Bf_{c_{n-1}}^i(\BJ_{2,i})=\BJ_{2,0}$). 
The connected components of the union of these little Julia sets are called the \textit{bouquets} $\BB_{2,j}$, $j=0,1,\dots,t_n/t_{n-1}-1$, (here, $\Bf_{c_{n-1}}^j(\BB_{2,j})=\BB_{2,0}$). 
In other words, 
\[\BB_{2,j}:=\bigcup_{k=0}^{\frac{t_{n+1}}{t_{n-1}}-1}\BJ_{2, k\frac{t_n}{t_{n-1}}+j}.\] 
Each $\BB_{2,j}$ consists of $t_{n+1}/t_{n}$ little Julia sets touching each other at one of
their non-dividing fixed points. As usual, $\BB_{2,0}$ denotes the bouquet
containing the critical point. See Figure \ref{satsat}.

By an equipotential $E^{\eta(\eps)}\subset S_{n-1}(W_{n-1,0})$, and the external rays landing at $\alf_{n-1}$, we form the puzzle pieces of level zero. Recall that $Y^0_0$ denotes the critical puzzle piece of level zero. 
The following lemma shows that the bouquets are well apart from each other. See Lemma~$10.5$ in \cite{Ly97} for a similar statement.  
\begin{lem}\label{bouquet}
For the twice satellite renormalizable parameters in a finite number of truncated secondary limbs, 
$\mod (Y^0_0 \setminus \BB_{2,0})$ is uniformly bounded from above and away from zero. 
\end{lem}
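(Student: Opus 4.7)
The plan is to establish both bounds via a compactness-and-continuity argument, exploiting the $\mathcal{SL}$ hypothesis in an essential way. The twice satellite renormalizable parameters $c_{n-1}$ under consideration range over a compact subset $\mathcal{K}$ of the parameter plane, namely a finite union of truncated secondary limbs with a small neighborhood of the root points removed. Within each truncated secondary limb, the $d-1$ external rays landing at the dividing fixed point $\alpha_{n-1}$ of $\Bf_{c_{n-1}}$, together with the equipotential $E^{\eta(\eps)}$, undergo a holomorphic motion (cf.\ the proof of Proposition~\ref{unifrom-pseudo-conj}), so the boundary $\partial Y_0^0$ varies continuously in the Hausdorff topology on $\mathcal{K}$.

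The next ingredient is the continuity of the bouquet $\BB_{2,0}(c_{n-1})$ in the Hausdorff topology on $\mathcal{K}$. Since the combinatorics is fixed, the dividing fixed point $\alpha_n$ of $\rr\Bf_{c_{n-1}}$ (together with its $\Bf_{c_{n-1}}$-orbit and its preimages under $\rr\Bf_{c_{n-1}}$) stays repelling and moves holomorphically. Combined with the a priori bound $\mod(\rr^2\Bf_{c_{n-1}})\geq\eps$, this forces the little Julia sets $\BJ_{2,kt_n/t_{n-1}}$ composing $\BB_{2,0}$ to vary continuously in Hausdorff topology (their straightenings lie in a compact family of unicritical polynomials with connected Julia set by the Straightening Theorem, and $J$-stability within the combinatorial class kills any discontinuity).

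With continuity in hand, the upper bound is immediate: $\BB_{2,0}$ is a non-degenerate continuum (it contains the little Julia set $\BJ_{2,0}$, which has a uniformly positive diameter relative to $Y_0^0$ by the a priori bounds), so $\mod(Y_0^0\setminus\BB_{2,0})$ is a finite, continuous function of $c_{n-1}\in\mathcal{K}$, hence uniformly bounded from above. For the lower bound, suppose for contradiction that some sequence $c^{(k)}\in\mathcal{K}$ satisfies $\mod(Y_0^0\setminus\BB_{2,0})\to 0$. Passing to a subsequence, $c^{(k)}\to c^{(\infty)}\in\mathcal{K}$, and by the continuity above the bouquet $\BB_{2,0}(c^{(\infty)})$ must touch $\partial Y_0^0(c^{(\infty)})$. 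However, $\BB_{2,0}\subset\BJ_{1,0}=J(\rr\Bf_{c^{(\infty)}_{n-1}})$, and the only point of $\BJ_{1,0}$ that can lie on $\partial Y_0^0$ is the dividing fixed point $\alpha_{n-1}$ of $\Bf_{c^{(\infty)}_{n-1}}$. Since $\alpha_{n-1}$ is a \emph{non-dividing} fixed point of $\rr\Bf_{c^{(\infty)}_{n-1}}$ while $\BB_{2,0}$ is organized at the \emph{dividing} fixed point $\alpha_n$ of $\rr\Bf_{c^{(\infty)}_{n-1}}$, combinatorics forces $\alpha_{n-1}\notin\BB_{2,0}$, giving the required contradiction.

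The main obstacle is the Hausdorff continuity of the bouquet $\BB_{2,0}$ on $\mathcal{K}$ asserted in the second paragraph: filled Julia sets are in general wildly discontinuous in the parameter at bifurcations, so one must use that our parameters are $J$-stable within the fixed combinatorial class and that the straightenings of $\rr^2\Bf_{c_{n-1}}$ form a compact family (by the a priori bounds and the Straightening Theorem). The same argument appears, for $d=2$, in Lemma~10.5 of \cite{Ly97}, and the extension to arbitrary degree goes through without change once the uniform pseudo-conjugacy Proposition~\ref{unifrom-pseudo-conj} is in place.
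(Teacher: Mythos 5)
Your overall strategy (compactness of the finite union of truncated secondary limbs together with some form of continuity of the bouquet) is the same compactness argument the paper uses, but there is a genuine gap in how you justify the crucial continuity/semi-continuity claim, and one of your supporting ingredients is misapplied.

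The key problem is your second paragraph. You invoke a priori bounds for $\rr^2 \Bf_{c_{n-1}}$ and $J$-stability ``within the fixed combinatorial class'' to get Hausdorff continuity of $\BB_{2,0}$. But the parameter set $\mathcal{K}$ for this lemma consists of \emph{all} twice satellite renormalizable parameters in a finite union of truncated secondary limbs, not only those lying in the $\mathcal{SL}$ class of our fixed map $f$; most of these parameters are not infinitely renormalizable, are not $J$-stable, and carry no a priori bounds. So neither ingredient is available over $\mathcal{K}$, and the asserted Hausdorff continuity is left unsupported. The paper sidesteps this entirely: using the external rays landing at $\alpha_n$ (and their preimages under $\rr\Bf_{c_{n-1}}$) it forms the critical puzzle piece $X_0^0$, thickens it continuously (the thickening trick recalled in Section \ref{favnest}) to produce a \emph{continuous family of polynomial-like mappings} over the truncated limb, and then applies Proposition~\ref{bouquet}'s predecessor, which gives precisely upper semi-continuity of the filled Julia sets $K_\lambda$ in such a family. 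Upper semi-continuity together with the observation that $\BB_{2,0}$ is compactly contained in $Y_0^0$ for every parameter in the closure of $\mathcal{K}$ is all that is needed; full Hausdorff continuity is neither claimed nor required. Your contradiction argument can in fact be run with upper semi-continuity alone, but as written it leans on a continuity statement you have not established.

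Two smaller points. For the upper bound, the paper uses the elementary observation that $\alpha_n$ and $0$ are distinct points of $\BB_{2,0}$; your appeal to a priori bounds to get a ``uniformly positive diameter'' is again not available over $\mathcal{K}$ and is unnecessary. Also, in your first paragraph the number of rays landing at the dividing fixed point $\alpha_{n-1}$ is $q \geq 2$, not $d-1$ (the $d-1$ rays land at the non-dividing $\beta$-fixed points).
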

Let $X$ and $Y$ be non-empty compact subsets of $\cc$. The \textit{Hausdorff distance} between $X$ and $Y$ is defined as 
\[d_H(X,Y):= \inf \{\eps\in \mathbb{R} :Y\subset B_\eps(X)\text{, and } X\subset B_\eps(Y)\},\]
where $B_\eps(X)$ denotes the $\eps$ neighborhood of $X$ in the Euclidean metric. The space of all non-empty compact subsets of $\cc$ endowed with this metric is a complete metric space. 

We say that a family of simply connected domains $U_{\lambda}$, parametrized on a topological disk, depends continuously on $\lambda$, if there exist choices of uniformizations $\psi_{\lambda}:D_1\ra U_{\lambda}$ continuous in both variables.
We say that a family of polynomial-like mappings $(P_{\lambda}\!:\!V_{\lambda} \ra U_{\lambda},\lambda \!\in\! \Lambda)$, 
parametrized on a topological disk $\Lambda$, depends continuously on $\lambda$, if 
\begin{itemize}
\item[--] $U_{\lambda}$ is a continuous family in $\cc$, and
\item[--] for every fixed $z\in\cc$, the map $P_{\lambda}(z)$, restricted to the values of $\lambda$ it is defined, 
depends continuously on $\lambda$.
\end{itemize}

\begin{prop}
Let $(P_{\lambda}:V_{\lambda} \ra U_{\lambda},\lambda \in \Lambda)$ be a continuous
family of polynomial-like mappings with connected filled Julia sets $K_{\lambda}$. Then for every 
$\lambda\in \Lambda$ and every $\eps>0$, there exists $\eta>0$ such that if $\vert\lambda'-\lambda\vert<\eta$, 
then $K_{\lambda'}\subseteq B_\eps(K_\lambda)$.
\end{prop}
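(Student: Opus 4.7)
The plan is to argue by contradiction using compactness. Suppose the conclusion fails at some $\lambda \in \Lambda$ and some $\eps_0 > 0$: then there is a sequence $\lambda_n \to \lambda$ together with points $z_n \in K_{\lambda_n}$ satisfying $d(z_n, K_\lambda) \geq \eps_0$. Continuity of the family $U_{\lambda'}$ at $\lambda$ implies that $\overline{U_{\lambda_n}}$ lies in a fixed bounded set for $n$ large, so $\{z_n\} \subset \overline{V_{\lambda_n}}$ is a bounded sequence; after passing to a subsequence I may assume $z_n \to z_\infty \in \overline{V_\lambda}$ with $d(z_\infty, K_\lambda) \geq \eps_0$. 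My goal is to contradict this by showing $z_\infty \in K_\lambda$.

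To do this, I first upgrade the continuity assumption into a usable uniform statement. The restrictions of the $P_{\lambda_n}$ to any compact $K' \Subset V_\lambda$ eventually make sense (since $V_{\lambda_n} \to V_\lambda$), form a locally uniformly bounded sequence of holomorphic maps, and converge pointwise to $P_\lambda$. By Montel--Vitali this gives local uniform convergence $P_{\lambda_n} \to P_\lambda$ on compact subsets of $V_\lambda$. Working through continuous uniformizations $\psi_{\lambda'} : \overline{D} \to \overline{V_{\lambda'}}$ and $\phi_{\lambda'} : \overline{D} \to \overline{U_{\lambda'}}$ of the nested disks, the conjugated maps $\phi_{\lambda'}^{-1} \circ P_{\lambda'} \circ \psi_{\lambda'}$ are proper holomorphic maps of $D$ of degree $d$ depending continuously on $\lambda'$, hence converge uniformly on $\overline{D}$. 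Extracting a diagonal subsequence, I may assume that for every $k \geq 0$ one has $P_{\lambda_n}^k(z_n) \to w_k \in \overline{V_\lambda}$, with $w_0 = z_\infty$, and the disk picture identifies $w_{k+1}$ with the continuous boundary extension of $P_\lambda$ applied to $w_k$.

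The hard part, and the main obstacle, will then be to exclude the possibility that some $w_k$ lies on $\partial V_\lambda$, so as to conclude that $w_k = P_\lambda^k(z_\infty) \in V_\lambda$ for every $k$ and hence $z_\infty \in K_\lambda$. Since $P_\lambda : V_\lambda \to U_\lambda$ is a proper map between simply connected domains, its continuous extension maps $\partial V_\lambda$ into $\partial U_\lambda$; thus if $w_{k_0}$ were the first iterate on $\partial V_\lambda$, I would get $w_{k_0+1} \in \partial U_\lambda$, contradicting $w_{k_0+1} \in \overline{V_\lambda}$ together with $\overline{V_\lambda} \cap \partial U_\lambda = \emptyset$ (which follows from $V_\lambda \Subset U_\lambda$). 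This rules out the boundary case and forces $z_\infty \in K_\lambda$, which contradicts $d(z_\infty, K_\lambda) \geq \eps_0$ and completes the proof.
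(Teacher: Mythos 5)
Your argument by compactness and contradiction is a genuinely different route from the paper's, which is a direct forward escape argument. The paper fixes $z\notin B_\eps(K_\lambda)$ and argues: if $z\in V_\lambda$, then $z\notin K_\lambda$ forces some finite iterate $P_\lambda^l(z)$ into $U_\lambda\setminus V_\lambda$, and since finite-time iteration varies continuously in $(\lambda,z)$ on compact subsets of $V_\lambda$, this escape persists for $\lambda'$ near $\lambda$, so $z\notin K_{\lambda'}$; the case $z\notin V_\lambda$ is handled directly from the continuity of $V_\lambda$. A routine compactness argument over $z$ then gives the uniform $\eta$. Crucially, this argument never needs to touch $\partial V_\lambda$.

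Your version, by contrast, leans on two things the hypotheses do not supply. First, you invoke continuous uniformizations $\psi_{\lambda'}:\overline D\to\overline{V_{\lambda'}}$ and $\phi_{\lambda'}:\overline D\to\overline{U_{\lambda'}}$, conclude that the conjugated degree-$d$ proper maps converge uniformly on $\overline D$, and then identify $w_{k+1}$ with ``the continuous boundary extension of $P_\lambda$ applied to $w_k$''. But the paper's definition of a continuous family of simply connected domains only gives uniformizations $D_1\to U_\lambda$ continuous in $(\lambda,z)\in\Lambda\times D_1$: $\partial V_\lambda$ need not be locally connected, so the Riemann map need not extend to $\overline D$, and even when it does, nothing ensures the extension depends continuously on $\lambda$. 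This is exactly what your endgame uses, since the $\partial V_\lambda\mapsto\partial U_\lambda$ step presupposes such a boundary extension. Second, the step ``$z_n\to z_\infty\in\overline{V_\lambda}$'' silently uses a Hausdorff-type upper semicontinuity of $\overline{V_{\lambda'}}$ at $\lambda$, which again is not a formal consequence of interior-only continuity of the $\psi_{\lambda'}$, so the iteration scheme does not cleanly get off the ground. Both gaps evaporate in the paper's formulation, because escape in finitely many steps is an open, interior condition on $(\lambda, z)$ and never forces a limit onto $\partial V_\lambda$. If you want to retain the contradiction structure, the natural repair is to import the same observation: when $z_\infty\in V_\lambda$ but $d(z_\infty,K_\lambda)\geq \eps_0$, there is a finite escape time $l$, and continuity of finite-time iteration transfers this escape to $(P_{\lambda_n},z_n)$ for $n$ large, contradicting $z_n\in K_{\lambda_n}$, without ever passing to boundary values.
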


\begin{proof}
Assume that $z\notin B_\eps(K_{\lambda})$. If $z \notin V_\lambda$, then by continuous dependence of $V_\lambda$ on $\lambda$, $z\notin K_{\lambda'}$ for $\lambda'$ sufficiently close to $\lambda$. 
If $z\in V_\lambda$, then there exists a
positive integer $l$ with $P_{\lambda}^l(z) \in U_{\lambda}
\setminus V_{\lambda}$. As $P_{\lambda'}:V_{\lambda'}\ra U_{\lambda'}$ converges to
$P_{\lambda}:V_\lambda\ra U_\lambda$, when $\lambda'\ra\lambda$, at least one of $P_{\lambda'}^l(z)$ or $P_{\lambda'}^{l+1}(z)$ belongs to $U_{\lambda'}
\setminus V_{\lambda'}$, for $\lambda'$ sufficiently close to $\lambda$. 
Therefore, $z\notin K_{\lambda'}$.
\end{proof}
\begin{proof}[Proof of Lemma~\ref{bouquet}]
Let $c_{n-1}$ be a twice satellite renormalizable parameter in a given truncated secondary limb. Consider the external rays landing at the dividing fixed point $\alf_n$ of $\rr \Bf_{c_{n-1}}$ and their preimages under 
$\rr \Bf_{c_{n-1}}$. Let $X_0^0$ denote the critical puzzle piece obtained from these rays. 
As $\Bf_{c_{n-1}}$ is twice satellite renormalizable, $\rr^2 \Bf_{c_{n-1}}:X_0^0\ra \cc$ is a proper branched covering over its image. One can consider a continuous thickening of $X_0^0$, mentioned in Section \ref{thickening}, to form a continuous family of polynomial-like mappings parametrized over this truncated limb. 
The little filled Julia set of this map is denoted by $\BK_{2,0}$, and the filled bouquet $\BB_{2,0}$ is the connected component of 
\[\bigcup_{i=0}^{t_{n+1}/t_{n-1}-1} \Bf_{c_{n-1}}(\BK_{2,0})\]
containing the critical point.     

For a twice satellite renormalizable $c_{n-1}$ in a finite number of truncated secondary limbs, the filled bouquet $\BB_{2,0}$ is defined an is a union of a finite number of little filled Julia sets. Also $\BB_{2,0}$ is contained well inside the interior of $Y_0^0$ for $c_{n-1}$ in the closure of these parameter values. Therefore, by the above proposition, $\mod(Y_0^0\setminus \BB_{2,j})$ is uniformly bounded away 
from zero.

To see that these moduli are uniformly bounded from above, one needs only to observe that $\alf_{n}$ and $0$ belong to $\BB_{2,0}$ and are distinct for these parameters.  
\end{proof}

By Lemma~\ref{bouquet}, there are simply connected domains $\BL'_{n}\subseteq \BL_{n}$ and $\widetilde{\BL}'_{n}\subseteq \widetilde{\BL}_{n}$, with smooth boundaries, such that the moduli of the annuli 
\begin{equation}\label{annuli}
\begin{matrix}
Y_0^0 \setminus \BL_{n}, & \BL_{n} \setminus \BL'_{n}, & \BL'_{n} \setminus \BB_{2,0}\\
\widetilde{Y}_0^0 \setminus \widetilde{\BL}_{n}, & \widetilde{\BL}_{n} \setminus \widetilde{\BL}'_{n}, & \widetilde{\BL}'_{n} \setminus \widetilde{\BB}_{2,0}
\end{matrix} 
\end{equation}
are uniformly bounded from above and away from zero by some constants depending only on the combinatorial class $\mathcal{SL}$. It follows that the ratios 
\[\frac{\mod(Y_0^0 \setminus \BL_{n})}{\mod(\widetilde{Y}_0^0 \setminus \widetilde{\BL}_{n})},
\quad
\frac{\mod(\BL_{n}\setminus\BL'_{n})}{\mod(\widetilde{\BL}_{n}\setminus\widetilde{\BL}'_{n})},\quad
\frac{\mod(\BL'_{n}\setminus\BB_{2,0})}{\mod(\widetilde{\BL}'_{n} \setminus \widetilde{\BB}_{2,0})}\] 
are also uniformly bounded from above and away from zero independent of $n$.

Above data implies that there exists a q.c.\ homeomorphism 
\[\Bh_{n-1}:Y_0^0 \setminus \BL_{n}\ra \widetilde{Y}_0^0 \setminus \widetilde{\BL}_{n},\] 
with uniformly bounded dilatation, which matches the B\"ottcher marking on $\partial Y_0^0$ (See Lemma  \ref{adjust}). Then, we lift $\Bh_{n-1}$ via $\Bf_{c_{n-1}}^{-i}$ and $\Bf_{\tilde{c}_{n-1}}^{-i}$ to extend $\Bh_{n-1}$ to q.c.\ homeomorphisms 
\[\Bh_{n-1}:\Bf_{c_{n-1}}^{-i}(Y_0^0 \setminus \BL_{n})\ra \Bf_{\tilde{c}_{n-1}}^{-i}(\widetilde{Y}_0^0 \setminus \widetilde{\BL}_{n}), \text{ for } i=1,2,\dots,\frac{t_n}{t_{n-1}}-1.\]
The domain of each map above is a puzzle piece $Y^0_j$ (with $j=t_n/t_{n-1}-i$) cut off by $E^{\eta /d^i}$ and the appropriate component of $\Bf_{c_{n-1}}^{-i}(\partial\BL_{n})$. As all these maps match the B\"ottcher marking on  $\partial Y^0_j$, they can be glued together. Finally, one extends this map onto unbounded components of
$\mathbb{C} \setminus \Dom \Bh_{n-1}$ as the identity in the B\"ottcher coordinates. We denote this extended map with the same notation $\Bh_{n-1}$. As it is lifted under and extended by holomorphic maps, there is a uniform bound on its dilatation.  

Let $\Delta_{n-1,0}$ be the $S_{n-1}$-preimage of the domain inside $E^{\eta(\eps)}$, and $L_{n,i}$, for $i=1,2,\dots, \frac{t_n}{t_{n-1}}-1$, be the component of $S_{n-1}^{-1}(\Bf_{c_{n-1}}^{-i}(\BL_{n}))$ containing $J_{n-1,i}\cap \pc(f)$.
Then, define the multiply connected regions  
\[\Omega_{n-1, 0}:=\Delta_{n-1, 0}\setminus \bigcup_{i=0}^{t_n/t_{n-1}-1} L_{n,it_{n-1}}.\]
Like before, $\Delta_{n-1,i}$ and $\Omega_{n-1,i}$ are defined as the appropriate $f^{i}$-preimage of $\Delta_{n-1,0}$ and $\Omega_{n-1,0}$ intersecting $J_{n-1,i}$, respectively.
  
We have 
\[h_{n-1,0}:=\widetilde{S}_{n-1}^{-1}\circ \Bh_{n-1} \circ S_{n-1}:\Delta_{n-1,0}\ra \widetilde{\Delta}_{n-1,0},\] \[\text{ with } h_{n-1,0}(\Omega_{n-1,0})=\widetilde{\Omega}_{n-1,0}.\]
Also, 
\[h_{n-1,i}:=\tilde{f}^{-i}\circ h_{n-1,0} \circ f^i:\Delta_{n-1,i}\ra \widetilde{\Delta}_{n-1,i},\] 
\[\text{ with } h_{n-1,i}(\Omega_{n-1,i})=\widetilde{\Omega}_{n-1,i}.\]
As $E^{\eta(\eps)}$ is contained in $S_{n-1}(W_{n-1,0})$, $\Omega_{n-1,0}$ is contained in $W_{n-1,0}$. Therefore, $\Omega_{n-1,0}$ is well inside $D_{n-1,0}:=V_{n-1,0}$. The conformal invariance of modulus implies that the other domains $\Omega_{n-1,i}$ are well inside $D_{n-1,i}:=V_{n-1,i}$ as well. This completes the construction in Case $\mathscr{C}$.
\bigskip

To fit together the multiply connected domains $\Omega_{n(k),i}$ and the q.c.\ homeomorphisms $h_{n(k),i}:\Omega_{n(k),i}\ra \widetilde {{\Omega}}_{n(k),i}$, we follow the string of cases introduced before 
Lemma~\ref{equipot}. 
In Cases $\mathscr{A}$ and $\mathscr{B}$, we have adjusted $\Bh_{n-1}$, in Lemma~\ref{adjusted}, such that it sends $\partial V_{n,0}$ to $\partial \widetilde{V}_{n,0}$. Therefore, if any of the three cases follows a Case $\mathscr{A}$ or $\mathscr{B}$, we consider $\rr^{n}f:W_{n,0} \ra V_{n,0}$ and straighten it with these choices of domains (instead of considering $\rr^{n}f: V_{n,0} \ra U_{n,0}$). If the construction on some level $n$ follows a Case $\mathscr{C}$ (level $n-1$ belongs to $\mathscr{C}$), the set $\Delta_{n,0}$ introduced on level $n$ is not contained in the hole $D_{n,0}=L_{n,0}$ obtained on level $n-1$ in  Case $\mathscr{C}$. Note that here the $n$-th renormalization is of satellite type. The following paragraph explains the adjustment needed here.

As the annulus $\BDelta_{n,0}\setminus J(\Bf_{c_n})$ has definite modulus  in terms of $\eps$, the annulus $\BDelta_{n,0}\setminus \BB_{1,0}(\supset \BDelta_{n,0}\setminus J(\Bf_{c_n}))$, where $\BB_{1,0}:=\cup_{i=0}^{\infty} \Bf_{c_n}^i(J(\rr \Bf_{c_n}))$, also has definite modulus in terms of $\eps$. By quasi-invariance of modulus, the annulus obtained from 
\[S_n(L'_{n,0}\cap \Dom S_n)\setminus \BB_{1,0}\]
must have definite modulus in terms of $\eps$. Now, let $\BE_n$ be a topological disk contained in 
\[S_n(L'_{n,0}\cap \Dom S_n)\cap \BDelta_{n,0}\] 
with $\mod (\BE_n\setminus \BB_{1,0})$ bigger than a constant in terms of $\eps$ (similarly define the  corresponding tilde one). By a similar argument as in Lemma~\ref{adjusted} we can adjust $\Bh_n$ through a homotopy to obtain a map $\Bh'_{n}:\BE_n \ra \widetilde{\BE}_{n}$. Now, in this situation $\BDelta_{n,0}$ is replaced by $\mathbf{E}_n$, $\Delta_{n,0}$ by $S_n^{-1}(\BE_n)$, and $h_n$ by $\widetilde{S}_n^{-1} \circ \Bh'_n  \circ S_n$. The annulus $L_{n,0}\setminus L'_{n,0}$ provides the definite space around $\Omega_{n,0}$ in $L_{n,0}$.

In the following two sections, we will denote the holes of $\Omega_{n(k),i}$ by $\mathbb{V}_{n(k)+1,j}$. 
That is, $\mathbb{V}_{n(k)+1,j}$ is $V_{n(k)+1,j}$, if step $n(k)$ belongs to Case
$\mathscr{A}$ or $\mathscr{B}$, and $\mathbb{V}_{n(k)+1,j}$ is
$S^{-1}_{n(k)}(\BL_{n(k),j})$ if step $n(k)$ belongs to Case $\mathscr{C}$.

\subsection{The gluing maps $g_{n(k),i}$}
In this section we build q.c.\ homeomorphisms 
\[g_{n(k),i}:\mathbb{V}_{n(k),i}\setminus \Delta_{n(k),i}\ra \widetilde{\mathbb{V}}_{n(k),i}\setminus \widetilde{\Delta}_{n(k),i},\]
with uniformly bounded dilatations, needed to glue the maps $h_{n(k), i}$ together.  
Every $g_{n(k),i}$ must be identical with $h_{n(k-1),i}$ on $\partial \mathbb{V}_{n(k),i}$, and with $h_{n(k),i}$ on $\partial \Delta_{n(k),i}$ (which is the outer boundary of $\Omega_{n(k),i}$). Then, gluing all these maps $g_{n(k),i}$ and $h_{n(k),i}$ together produces a q.c.\ homeomorphism denoted by $H$. 
In what follows, for the simplicity of the notation, we use index $n$ instead of $n(k)$, and assume that $n$ runs over the  subsequence $\langle n(k)\rangle_{k=1}^\infty$. So, by ``for all $n$'' we mean ``for all $n(k)$''.

Like previous steps, first we build $g_{n,0}$, for all $n$, and then lift them via $f^{i}$ and $\tilde{f}^{i}$ to
obtain $g_{n,i}$, for $i=1, 2, \dots, t_n$. Definitions of the maps $h_{n,i}$ as well as the domains $\mathbb{V}_{n,i}$ and $\Omega_{n,i}$ imply that these maps also glue together on the boundaries of their domains of definition. Again, we drop the second subscript if it is zero, i.e., $g_n$ denotes the map $g_{n,0}$. 

To build a q.c.\ homeomorphism from an annulus to an annulus with given boundary conditions, there are $\mathbb{Z}$ possible choices for the number of the ``twists'' one may make. To have a uniform bound on the dilatation of such a map, not only must the two annuli have comparable moduli uniformly bounded away from zero but also the number of twists must be uniformly bounded as well. Note that the homotopy class of the final map $H$ depends on the choices of these twists.

In this section, we show that the annuli $\mathbb{V}_{n,0}\setminus \Delta_{n,0}$ and
$\widetilde{\mathbb{V}}_{n,0}\setminus \widetilde{\Delta}_{n,0}$ have comparable moduli. 
In the next section we prescribe the correct number of twists needed to obtain a Thurston conjugacy.

\begin{lem}\label{gluspace}
There exists a constant $M'$ depending only on $\eps$ such that for every $n\geq 1$, 
\[\frac{1}{M'}\leq \frac{\mod (\widetilde{\mathbb{V}}_{n,0}\setminus \widetilde{\Delta}_{n,0})}{\mod (\mathbb{V}_{n, 0}\setminus \Delta_{n, 0})} \leq M'.\] 
\end{lem}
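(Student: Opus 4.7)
The plan is to construct a single q.c.\ homeomorphism with uniformly bounded dilatation from $\mathbb{V}_{n,0}\setminus\Delta_{n,0}$ onto $\widetilde{\mathbb{V}}_{n,0}\setminus\widetilde{\Delta}_{n,0}$; quasi-invariance of modulus then yields the stated ratio bound. The key observation is that the q.c.\ homeomorphism $h_{n-1,0}=\widetilde{S}_{n-1}^{-1}\circ\Bh'_{n-1}\circ S_{n-1}$ from \eqref{h'_{n-1}} (with its Case-$\mathscr{C}$ analogue) already constructed at the previous step has dilatation bounded in terms of $\eps$ and $\mathcal{SL}$, and by Lemma~\ref{adjusted} (respectively the Case-$\mathscr{C}$ construction involving $\BE_n$) it carries each hole $\mathbb{V}_{n,i}$ of $\Omega_{n-1,i}$ onto the corresponding $\widetilde{\mathbb{V}}_{n,i}$.

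Restricting $h_{n-1,0}$ to the hole $\mathbb{V}_{n,0}$ produces a q.c.\ homeomorphism $\mathbb{V}_{n,0}\to\widetilde{\mathbb{V}}_{n,0}$ of bounded dilatation, hence
\[
\mod(\mathbb{V}_{n,0}\setminus\Delta_{n,0})\asymp\mod\bigl(\widetilde{\mathbb{V}}_{n,0}\setminus h_{n-1,0}(\Delta_{n,0})\bigr).
\]
It remains to bridge $h_{n-1,0}(\Delta_{n,0})$ and $\widetilde{\Delta}_{n,0}$. Both are topological disks inside $\widetilde{\mathbb{V}}_{n,0}$ enclosing the little filled Julia set $\widetilde{K}(\rr^n\tilde f)$: $\widetilde{\Delta}_{n,0}$ is the $\widetilde{S}_n$-preimage of the equipotential disk bounded by $\widetilde{E}^{\eta(\eps)}$, while $h_{n-1,0}(\Delta_{n,0})$ is a bounded-dilatation q.c.\ image of the analogous $S_n$-preimage. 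By quasi-invariance of modulus under the straightenings $S_n,\widetilde{S}_n$ and $h_{n-1,0}$, both $\mod(h_{n-1,0}(\Delta_{n,0})\setminus\widetilde{K}(\rr^n\tilde f))$ and $\mod(\widetilde{\Delta}_{n,0}\setminus\widetilde{K}(\rr^n\tilde f))$ lie in the same fixed positive interval, determined by the B\"ottcher modulus at equipotential level $\eta(\eps)$ (a universal function of $\eta(\eps)$ and $d$) and the uniform q.c.\ constants coming from $\eps$ and $\mathcal{SL}$. A standard q.c.\ interpolation between two annular neighborhoods of a common compact set with comparable moduli, inside the ambient topological disk $\widetilde{\mathbb{V}}_{n,0}$, then yields a q.c.\ self-map $\phi$ of $\widetilde{\mathbb{V}}_{n,0}$, identical on $\partial\widetilde{\mathbb{V}}_{n,0}$, of bounded dilatation, carrying $h_{n-1,0}(\Delta_{n,0})$ onto $\widetilde{\Delta}_{n,0}$.

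The composition $\phi\circ h_{n-1,0}$ is then a q.c.\ map $\mathbb{V}_{n,0}\setminus\Delta_{n,0}\to\widetilde{\mathbb{V}}_{n,0}\setminus\widetilde{\Delta}_{n,0}$ of uniformly bounded dilatation, which directly yields the ratio bound by quasi-invariance of modulus. The main obstacle lies in the interpolation step: beyond the comparable moduli of the two holes, one needs the outer annulus $\widetilde{\mathbb{V}}_{n,0}\setminus\widetilde{\Delta}_{n,0}$ to itself have a definite modular lower bound so that the interpolation $\phi$ has room to move the inner hole with uniformly bounded dilatation. In Cases $\mathscr{A},\mathscr{B}$ this lower bound is supplied by the buffer $V_{n,0}\setminus W_{n,0}$ of modulus at least $\eps/d$ (from the degree-$d$ covering $f^{t_n}\colon W_{n,0}\to V_{n,0}$ and $\mod(U_{n,0}\setminus V_{n,0})\geq\eps$), while in Case $\mathscr{C}$ the same role is played by the annulus $\BL_n\setminus\BL'_n$ from \eqref{annuli}, together with the definite-modulus estimate on $\BE_n\setminus\BB_{1,0}$ used in the Case-$\mathscr{C}$ adjustment. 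All relevant uniform bounds are inherited from the a priori bounds and the compactness of the class $\mathcal{SL}$.
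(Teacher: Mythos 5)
Your proposal reaches the right conclusion, but by a detour that is both longer and less airtight than the paper's argument, and which in the end rests on the same facts.

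The paper's proof is direct: it observes that $\mod(\mathbb{V}_{n,0}\setminus\Delta_{n,0})$ and $\mod(\widetilde{\mathbb{V}}_{n,0}\setminus\widetilde{\Delta}_{n,0})$ are each, individually, pinched between two constants depending only on $\eps$ (and the $\mathcal{SL}$ class). The upper bound comes from $\eqref{const-eta}$ after a Case $\mathscr{A}/\mathscr{B}$ level (since $\mathbb{V}_{n,0}\setminus\Delta_{n,0}\subset V_{n,0}\setminus J_{n,0}$), or from the proof of Lemma~\ref{bouquet} after a Case $\mathscr{C}$ level; the lower bound comes from the buffer $V_{n,0}\setminus W_{n,0}$ (modulus $\geq\eps/d$) or the Case $\mathscr{C}$ estimates, exactly as you note in your final paragraph. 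Once both quantities lie in $[m(\eps),M(\eps)]$, the ratio bound is immediate, with $M'=M(\eps)/m(\eps)$.

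You instead try to produce a uniformly q.c.\ map between the two annuli and invoke quasi-invariance of modulus. This has two drawbacks. First, the interpolation step as stated is not self-justifying: knowing that $\mod(\widetilde{\mathbb{V}}_{n,0}\setminus h_{n-1,0}(\Delta_{n,0}))$ and $\mod(\widetilde{\mathbb{V}}_{n,0}\setminus\widetilde{\Delta}_{n,0})$ lie in a fixed interval does not alone guarantee a bounded-dilatation interpolation; one also needs geometric control on the two boundary curves (they are, in fact, quasi-circles with uniform constants, being images of equipotentials under $S_n^{-1}$ and the bounded-dilatation maps $h_{n-1,0}$, $\widetilde{S}_n^{-1}$, but you do not say this). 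Second, and more to the point, the ingredients you invoke to make the interpolation work — definite upper and lower bounds on the outer annuli moduli — are precisely the paper's entire proof. Once you have assembled those estimates, the conclusion follows by direct comparison, and the q.c.\ map $\phi\circ h_{n-1,0}$ is superfluous. So the proposal is not wrong, but the extra construction buys nothing and introduces an unverified step; you should strip it down to the two-sided modulus estimates and conclude directly.
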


\begin{proof}
If level $n$ follows one of Cases $\mathscr{A}$ or $\mathscr{B}$, then 
\[\mod (\mathbb{V}_{n,0}\setminus \Delta_{n,0})\leq \mod (V_{n,0}\setminus J_{n,0})\leq \eta,\] 
by \eqref{const-eta}. If level $n$ follows a 
Case $\mathscr{C}$ then 
\[\mod (\mathbb{V}_{n,0}\setminus \Delta_{n,0})\leq \mod S_n^{-1}(\BL_{n,0}\setminus \BB_{1,0}) \leq M'',\] 
for some constant $M''$ by the proof of Lemma~\ref{bouquet}  (Here $\BB_{1,0}$ is the unique bouquet of $\Bf_{c_n}$). 

Similarly, $\mod(\mathbb{V}_{n, 0}\setminus \Delta_{n,0})$ is bigger than $\eps$, or some constant depending on $\eps$, depending on whether level $n$ follows a Case $\mathscr{A}$, $\mathscr{B}$, or $\mathscr{C}$. 
Hence, $\mod(\mathbb{V}_{n,0}\setminus \Delta_{n,0})$ and $\mod(\widetilde{\mathbb{V}}_{n,0}\setminus \widetilde{\Delta}_{n,0})$ are pinched between two constants depending only on $\eps$. This implies the lemma.   
\end{proof}

Let $A(r)$ denote the round annulus $D_r\setminus D_1$, for $r>1$, and assume $\gamma:[0,1]\ra A(r)$
is a curve parametrized in the polar coordinate as $\gamma(t)=(r(t), \theta(t))$, for $t\in [0,1]$, with $r(t)$ and $\theta(t)$ continuous functions from $[0,1]$ to $\mathbb{R}$.
The \textit{wrapping number} of $\gamma$, denoted by $\omega(\gamma)$, is defined as $\theta(1)-\theta(0)$.
Given a curve $\gamma: [0,1] \ra U$, where $U$ is an annulus, with $\gamma(0)$ on the
inner boundary of $U$ (corresponding to the bounded component of $\cc\setminus U$) and $\gamma(1)$ on the outer boundary of $U$ (corresponding to the unbounded one), we define the wrapping number of $\gamma$ in $U$ as $\omega(\gamma):=\omega (\phi\circ \gamma)$,
where $\phi$ is a uniformization of $U$ by a round annulus. Note that $\omega(\gamma)$ is
invariant under the automorphism group of $U$. So, it is independent of the choice of the uniformization. 
In addition, just like winding number, it is constant over the homotopy class of all curves with the same boundary points.

\begin{prop} \label{spiral}
Given fixed constants $K\geq 1$ and $r>1$, there exists a constant $N$ such that for every $K$-q.c.\ homeomorphism $\psi:A(r) \ra A(r')$, the wrapping number of the curve $\psi(t), t\in[1,r]$, belongs to the interval $[-N,N]$.
\end{prop}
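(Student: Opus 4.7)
The plan is to invoke the quasi-invariance of extremal length under $K$-qc maps. First I would pass to universal covers via $z = e^w$, so that $A(r)$ is uniformized by the strip $\tilde A(r) = \{w : 0 < \operatorname{Re} w < \log r\}$ with deck group generated by $w \mapsto w + 2\pi i$, and likewise for $A(r')$. Any arc in $A(r)$ from the inner to the outer boundary then lifts to an arc in $\tilde A(r)$ connecting the two vertical boundary lines, and its wrapping number (in the sense of the proposition) is exactly the imaginary displacement of its endpoints; in particular the radial segment $[1,r]$ has wrapping number zero.

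For each real $\omega$ let $\Gamma_\omega$ denote the family of all arcs in $A(r)$ from the inner to the outer boundary with wrapping number $\omega$, and similarly $\widetilde\Gamma_\omega$ in $A(r')$. A direct computation on the flat cylinder model of $A(r)$ (circumference $2\pi$, height $\log r$) shows that the Euclidean metric is extremal for $\Gamma_\omega$, giving
\[
\lambda_{A(r)}(\Gamma_\omega) \;=\; \frac{\omega^2 + (\log r)^2}{2\pi \log r}.
\]
Since $\psi$ is a homeomorphism of annuli, it shifts the wrapping number of every such arc by one and the same constant $W := \omega(\psi([1,r]))$---the quantity to be bounded---so $\psi(\Gamma_0) = \widetilde\Gamma_W$ as curve families.

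Quasi-invariance of extremal length then yields $\lambda_{A(r')}(\widetilde\Gamma_W) \leq K \cdot \lambda_{A(r)}(\Gamma_0)$; substituting the explicit formulas and simplifying gives $W^2 \le \log r'\,(K\log r - \log r')$, and combining with the quasi-invariance of modulus $\log r' \le K \log r$ yields $|W| \le K \log r$. Thus the proposition holds with $N := K \log r$. The only step needing care is the identification $\psi(\Gamma_0) = \widetilde\Gamma_W$, which I would justify by lifting $\psi$ equivariantly to the universal covers and noting that the induced shift in imaginary endpoints is independent of the starting curve; the extremal length computation and the final arithmetic are routine.
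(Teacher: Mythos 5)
Your computation $\lambda_{A(r)}(\Gamma_\omega) = \frac{\omega^2 + (\log r)^2}{2\pi\log r}$ is correct, but the step you flagged as ``needing care'' is exactly where the argument fails: a homeomorphism of annuli does \emph{not} shift the wrapping number of every connecting arc by the same constant. Lift $\psi$ equivariantly to the strips and write $f_0, f_1\colon\mathbb{R}\to\mathbb{R}$ for the induced boundary maps in the imaginary coordinate, so each $f_j$ is increasing with $f_j(y+2\pi)=f_j(y)+2\pi$. An arc $\gamma$ lifting from $(0,y_0)$ to $(\log r, y_1)$ has $\omega(\gamma)=y_1-y_0$ and $\omega(\psi(\gamma))=f_1(y_1)-f_0(y_0)$, so the shift equals $\bigl(f_1(y_1)-y_1\bigr)-\bigl(f_0(y_0)-y_0\bigr)$, which depends on the endpoints unless both $f_j-\mathrm{id}$ are constant, i.e.\ unless $\psi$ restricts to a rigid rotation on each boundary circle. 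Already with $\psi$ equal to the identity on the outer circle and a non-rotational circle homeomorphism on the inner one, the two radial segments $[1,r]$ and $[-r,-1]$ (both of wrapping number zero) map to arcs of different wrapping number. So $\psi(\Gamma_0)\neq\widetilde\Gamma_W$, and the ``induced shift is independent of the starting curve'' claim you rely on is false.

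The approach is salvageable because the discrepancy is uniformly bounded: since $f_j$ is increasing and $f_j-\mathrm{id}$ is $2\pi$-periodic, the oscillation of $f_j-\mathrm{id}$ is at most $2\pi$, whence $|\omega(\psi(\gamma))-W|\le 4\pi$ for every $\gamma\in\Gamma_0$. Thus $\psi(\Gamma_0)$ lies in the family of arcs with wrapping number in $[W-4\pi,\,W+4\pi]$; when $|W|>4\pi$ each such arc has Euclidean length at least $\sqrt{(\log r')^2+(|W|-4\pi)^2}$, and the same quasi-invariance inequality gives $(|W|-4\pi)^2\le \log r'\,(K\log r-\log r')\le \tfrac14 K^2(\log r)^2$, i.e.\ $|W|\le 4\pi+\tfrac12 K\log r$. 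With that correction your route yields an explicit $N$. The paper takes a different, one-line route: normalize and invoke compactness of the family of $K$-q.c.\ homeomorphisms from $A(r)$ onto the annuli $A(r')$ with $r'\in[r^{1/K},r^K]$, on which the wrapping number of $\psi([1,r])$ is continuous and hence bounded. The compactness proof is shorter but non-effective; yours, once the gap above is closed, gives a concrete bound.
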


\begin{proof}
This follows from the compactness of the class of $K$-q.c.\ homeomorphisms from $A(r)$ to some $A(r')$.  
\end{proof}

In the following lemma let $\theta$ be a branch of the argument defined on $\mathbb{C}$ minus an straight ray from $0$ to infinity.
\begin{lem}\label{adjust}
Fix round annuli $A(r)$, $A(r')$, positive constants $\delta$, $K_1$ and $K_2$, as well as an integer $k$ with 
\[\mod A(r')/K_1 \leq \mod A(r)\leq K_1 \mod A(r'),\text{ and } \mod A(r)\geq \delta.\] 
There exists a constant $K$ depending only on $K_1$, $K_2$, $k$, and $\delta$, such that if homeomorphisms 
\[ h_1:\partial D_r \ra \partial D_{r'} \text{, and } h_2:\partial D_1\ra \partial D_1\] 
have $K_2$-q.c.\ extensions to some neighborhoods of these circles, 
then there exists a $K$-q.c.\ homeomorphism \text{$h:A(r)\ra A(r')$} with: 
\begin{itemize}
\item[--] for every $z\in \partial D_r$ we have $h(z)=h_1(z)$, and for every $z\in \partial D_1$ we have $h(z)=h_2(z)$;
\item[--] The curve $h(t)$, for $t\in[1,r]$, has wrapping number \[\theta(h_1(r))-\theta(h_2(1))+2k\pi.\]
\end{itemize}
\end{lem}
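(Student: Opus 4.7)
The plan is to work in logarithmic coordinates, where $A(r)$ and $A(r')$ become the flat cylinders $C=[0,L]\times(\mathbb{R}/2\pi\mathbb{Z})$ and $C'=[0,L']\times(\mathbb{R}/2\pi\mathbb{Z})$, with $L=\log r$ and $L'=\log r'$. The hypotheses translate to $L,L'\geq 2\pi\delta$ and $L'/L\in[1/K_1,K_1]$. The boundary maps $h_1,h_2$ lift to circle homeomorphisms $\bar h_1,\bar h_2$, and having $K_2$-q.c.\ extensions to neighborhoods implies (by the standard normal-families argument relating q.c.\ extension to quasi-symmetry) that the real-line lifts of $\bar h_i$ are quasi-symmetric with constant depending only on $K_2$. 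Applying the Beurling--Ahlfors extension on the universal cover and descending, I obtain $K'(K_2)$-q.c.\ extensions $\Phi_1,\Phi_2$ of $\bar h_1,\bar h_2$ to annular collars of any prescribed height; fix this height to be $\rho=\pi\delta/2$, and set $h:=\Phi_1$ on the outer collar $U_1=[L-\rho,L]\times(\mathbb{R}/2\pi\mathbb{Z})$ and $h:=\Phi_2$ on the inner collar $U_2=[0,\rho]\times(\mathbb{R}/2\pi\mathbb{Z})$.

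It remains to extend $h$ across the middle cylinder $M=[\rho,L-\rho]\times(\mathbb{R}/2\pi\mathbb{Z})$ so as to match $\Phi_1$ and $\Phi_2$ on the two edges and realize the prescribed wrapping. The quasicircles $\gamma_1=\Phi_1(\{x=L-\rho\})$ and $\gamma_2=\Phi_2(\{x=\rho\})$ bound a topological sub-cylinder $M'\subset C'$; by a standard straightening, whose dilatation depends only on $K'$ and $\delta$, there is a q.c.\ homeomorphism $\Psi\colon M'\to[0,L_0]\times(\mathbb{R}/2\pi\mathbb{Z})$ onto a flat model whose height $L_0$ is comparable to $L'-2\rho$. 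In the flat model I apply the affine twist
\[
T(x+iy)=\Bigl(\tfrac{L_0}{L-2\rho}\,x,\; y+\alpha x\Bigr),\qquad \alpha=\frac{2k\pi+\theta(h_1(r))-\theta(h_2(1))}{L-2\rho},
\]
composed with a small q.c.\ adjustment $G$ that reconciles the parametrizations of the two edges of $M$ with those of $\gamma_1,\gamma_2$ as inherited from $\Phi_i$. The composite $\Psi^{-1}\circ T\circ G$ matches $\Phi_i$ on the edges of $M$, and by the choice of $\alpha$ the twist $T$ contributes precisely the required wrapping $2k\pi+\theta(h_1(r))-\theta(h_2(1))$. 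Since $|\alpha|(L-2\rho)\leq 2\pi(|k|+1)$ and $L-2\rho\geq\pi\delta$, the dilatation of $T$ is bounded in terms of $|k|$ and $\delta$ alone, while the horizontal scaling factor is pinched in an interval depending on $K_1,\delta,K_2$.

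Gluing the three pieces on $U_2$, $M$, $U_1$ and translating back to annular coordinates produces the required map $h\colon A(r)\to A(r')$; it is continuous across the gluing circles by construction, hence q.c., with dilatation equal to the maximum over the three pieces and thus bounded by some $K=K(K_1,K_2,k,\delta)$. The main obstacle is the construction of the boundary-adjusting map $G$: one must absorb the non-affine behaviour of $\Phi_i$ on the edges of $M$ without disturbing either the wrapping contribution of $T$ or the uniform dilatation bound. This is arranged by having $G$ act only in a thin vertical collar inside $M$ adjacent to each edge, where the quasi-symmetry of $\Phi_i|_{\partial M}$ (with constant controlled by $K_2$) allows a standard $x$-direction interpolation between the two parametrizations; its dilatation is then bounded purely in terms of $K_2$, as required.
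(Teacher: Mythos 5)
Your proposal shares the paper's skeleton (pass to logarithmic/strip coordinates, extend $h_1,h_2$ over collars by Beurling--Ahlfors, interpolate across the middle with an affine twist), but you miss the one observation that makes the paper's proof close without further effort: when the Ahlfors extension formula (the $u,v$ of \cite{Ah66}, Page~42) is applied to the lift $\hat h_2$ of a \emph{circle} homeomorphism — a $1$-periodic map of $\mathbb{R}$ — the resulting extension maps the horizontal line at unit height to a horizontal line \emph{as a translation}. Indeed $u_x(x,1)=\tfrac12[\hat h_2(x+1)-\hat h_2(x-1)]=1$ and $v_x(x,1)=\tfrac12[\hat h_2(x+1)-2\hat h_2(x)+\hat h_2(x-1)]=0$ by $1$-periodicity. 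Consequently the boundary of each collar is mapped by a Euclidean translation, the middle cylinder can be crossed by an honest affine map, and there is no need for your straightening $\Psi$ or the reparametrizing map $G$. This is what makes the paper's proof work in a few lines; your version reinvents the surrounding structure but omits the computation that renders the gluing trivial.

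Without that observation, the proposal has concrete gaps. First, the quasicircles $\gamma_1=\Phi_1(\{x=L-\rho\})$ and $\gamma_2=\Phi_2(\{x=\rho\})$ need not bound a sub-cylinder: with $\rho=\pi\delta/2$ and only $L'\ge 2\pi\delta/K_1$ (not $2\pi\delta$, as you wrote), the image collars can have moduli up to $K'(K_2)\rho/2\pi$ each, which may exceed $L'/2\pi$ for large $K_1,K_2$, so the quasicircles may cross. The paper avoids exactly this by taking collar height $1$ and assuming $\log r,\log r'\ge 6\pi$ (achievable by a $\delta$-controlled rescaling), combined with the explicit estimate $v(x,1)\in[0,3]$ from Lemma~3 of \cite{Ah66}. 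Second, the claim that the dilatation of $G$ is ``bounded purely in terms of $K_2$'' is not justified: interpolating between two circle parametrizations inside a \emph{fixed-height} collar produces dilatation that depends on more than the quasisymmetry constants of the two maps individually (one needs the two boundary traces to be quasisymmetrically close, and the collar's modulus enters as well); you also must verify that $G$ contributes no uncontrolled wrapping. Third, the uniform dilatation bound for the straightening $\Psi$ of the quasicircle-bounded region $M'$ requires a quantitative quasicircle extension theorem you neither cite nor prove. All three issues evaporate once one uses the translation property of the Ahlfors extension, which is exactly how the paper proceeds.
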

If $h_1(r)$ or $h_2(1)$ does not belong to the domain of $\theta$, one may compose $h_1$ and $h_2$ with a small rotation. 
By adding a $+1$ or $-1$ to $k$, the statement still holds independent of the choice of this rotation.
One proves this lemma by explicitly building such homeomorphisms for every $k$. Further details are given in the Appendix.

A homeomorphism $h$ as above is called a \textit{gluing} of $h_1$ and $h_2$ with $k$ twists. If the number of 
the twists is not concerned, we say that $h$ is a gluing of $h_1$ and $h_2$. 

Applying the above lemma to the uniformizations of $\mathbb{V}_{n,0}\setminus \Delta_{n,0}$ and
$\widetilde{\mathbb{V}}_{n,0}\setminus \widetilde{\Delta}_{n,0}$,
with the induced maps from  $h_{n-1,0}$ and $h_{n,0}$ on the 
boundaries, and an integer $k_n$ which will be determined later, gives the \text{$K'$-q.c.\ }homeomorphisms
$g_n$. In the next section we prescribe some special numbers
$k_n$, which are bounded by a constant depending only on $\eps$, in
order to make the $K$-q.c.\ homeomorphism $H$ homotopic
to a topological conjugacy relative $\pc(f)$.

Definite moduli of the annuli $\mathbb{V}_{n,i}\setminus \Delta_{n, i}$ implies that all the nests of the  domains  $\mathbb{V}_{n,i}$ shrink to points in $\pc(f)$. Therefore, $H$ can be extended to a well defined 
\text{$K$-q.c.\ }homeomorphism on $\pc(f)$. 
See~\cite{St} for a detailed proof of this statement. 

\subsection{Isotopy}
Let $\Bpsi_n$ denote the topological conjugacy between $\Bf_{c_n}$ and
$\Bf_{\tilde{c}_n}$ obtained from extending the identity in the B\"ottcher coordinates through $J(\Bf_{c_n})$. The lift $\psi_{n,0}:= \widetilde{S}_n^{-1} \circ \Bpsi_n \circ S_n$ topologically conjugates $\rr^{n} f$ to $\rr^{n} \tilde{f}$ on a neighborhood of $J_{n, 0}$. Note that this neighborhood contains $\Omega_{n, 0}$. 
In the dynamic plane of $\Bf_{c_n}$, let $U(\eta)$ denote the domain enclosed by $E^{\eta}$. 
Define 
\[\psi_{n,i}:=f^{-i} \circ \psi_{n,0} \circ f^{i}:\Delta_{n,i} \ra \mathbb{C},\]
where the inverse branch $f^{-i}$ is chosen so that $\psi_{n,i}(\Delta_{n,i})$ covers a neighborhood of $\pc(f)\cap \tilde{J}_{n,i}$. 
\begin{lem}\label{homos}
Assume that level $n$ belongs to Case $\mathscr{A}$ or $\mathscr{B}$. 
For every $i=0,1,2,\dots, t_n-1$ , $h_{n,i}:\Delta_{n, i} \ra \widetilde{\Delta}_{n, i}$ is homotopic to $\psi_{n,i}:\Delta_{n,i} \ra \tilde{\Delta}_{n,i}$
relative the little Julia sets $J_{n+1,j}$ of level $n+1$ inside $\Delta_{n,i}$.
\end{lem}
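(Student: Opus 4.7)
The plan is to first pass to the dynamic plane of the straightened polynomial via $S_n$ and $\widetilde{S}_n$, so that the statement for $i=0$ becomes: $\Bh'_n$ is homotopic to $\Bpsi_n|_{\BDelta_{n,0}}$ inside $\BDelta_{n,0}$ relative to the union $\bigcup_j \BJ_{1,j}$ of little Julia sets contained in $\BDelta_{n,0}$. Once this is established, the general $i$ case will follow by pulling the homotopy back under the holomorphic iterates $f^i$ and $\tilde f^i$: since $h_{n,i}=\tilde f^{-i}\circ h_{n,0}\circ f^i$ and $\psi_{n,i}=\tilde f^{-i}\circ \psi_{n,0}\circ f^i$ along compatible inverse branches, a homotopy between $h_{n,0}$ and $\psi_{n,0}$ rel the little Julia sets in $\Delta_{n,0}$ lifts to a homotopy between $h_{n,i}$ and $\psi_{n,i}$ rel the little Julia sets in $\Delta_{n,i}$ (as the branched coverings permute the $J_{n+1,j}$).

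For $i=0$ in Case $\mathscr{A}$, I will decompose $\BDelta_{n,0}$ as the union of the critical puzzle pieces $Q^{\chi_{n+1}}_{n+1,j}$ of the favorite nest sitting around the bouquet and their complement in $\BDelta_{n,0}$. On the complement, $\Bh_n$ matches the B\"ottcher marking and hence agrees with $\Bpsi_n$ on every external ray and on the bounding equipotential; since both maps conjugate the dynamics there and agree on the boundaries of the finitely many topological quadrilaterals into which these rays and equipotentials divide the region, they are homotopic rel those boundaries (indeed, on the little Julia set components in the complement they coincide as both extend the identity in the B\"ottcher coordinates). On each puzzle piece $Q^{\chi_{n+1}}_{n+1,j}$, the construction in Case $\mathscr{A}$ already produced a homotopy between $\Bh_n$ and $\Bpsi_n$ relative $\BJ_{1,j}\cup\partial Q^{\chi_{n+1}}_{n+1,j}$ (obtained by applying Proposition~\ref{commute} and using that the quadrilaterals built in the proof of that proposition partition $Q^{\chi_{n+1}}_{n+1,j}$ into topological disks on whose boundaries the two maps agree). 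These homotopies glue along the common boundaries because each is the identity there, producing a homotopy between $\Bh_n$ and $\Bpsi_n$ on all of $\BDelta_{n,0}$ rel $\bigcup_j \BJ_{1,j}$. Finally, Lemma~\ref{adjusted} passes from $\Bh_n$ to $\Bh'_n$ through a homotopy rel $\bigcup_j \BJ_{1,j}$; concatenating gives the required homotopy.

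For $i=0$ in Case $\mathscr{B}$, the argument is parallel, but one must first skip the intermediate satellite level and work with the favorite nest built from the external rays landing at $\alpha_{n-1}$ and at the $\Bf_{c_{n-1}}$-orbit of $\alpha_n$ (as in the construction). The resulting pseudo-conjugacy again matches the B\"ottcher marking outside the critical puzzle pieces of that nest and is homotopic to $\Bpsi_n$ inside each puzzle piece rel the appropriate little Julia set of level $n+1$, via the same application of Proposition~\ref{commute}. The gluing and subsequent adjustment by Lemma~\ref{adjusted} produce the homotopy on $\BDelta_{n,0}$ rel $\bigcup_j \BJ_{1,j}$.

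The main technical obstacle I anticipate is bookkeeping the compatibility of the pieces of the homotopy on the skeleton of rays and equipotentials: one needs $\Bh_n$ and $\Bpsi_n$ to coincide \emph{pointwise} on this skeleton (not merely be homotopic on it), and this relies on the fact that every step in the construction of $\Bh_n$ --- the holomorphic motion $\Phi$, the adjustment sending $c_n$ to $\tilde c_n$ inside the equipotential, and each lift via $\Bf_{c_n}$ and $\Bf_{\tilde c_n}$ --- preserves the identity in B\"ottcher coordinates on the rays and equipotentials that bound the puzzle pieces. A secondary subtlety is ensuring that the homotopy produced on each $Q^{\chi_{n+1}}_{n+1,j}$ in Proposition~\ref{commute}, together with the identity homotopy on the complement, glues continuously; but this is automatic once one observes that both maps equal $\Bpsi_n$ on $\partial Q^{\chi_{n+1}}_{n+1,j}$ and the homotopy on the piece is relative that boundary.
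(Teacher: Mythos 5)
Your proof follows essentially the same route as the paper: reduce to $i=0$ by lifting under $f^i$ and $\tilde f^i$, pass to the straightened dynamics planes via $S_n$, partition $\BDelta_{n,0}$ by the rays and equipotential arcs bounding the critical puzzle pieces so that the pseudo-conjugacy and $\Bpsi_n$ agree pointwise on the resulting skeleton (both being the identity in B\"ottcher coordinates), invoke the homotopy inside each puzzle piece already produced in Case~$\mathscr{A}$ via Proposition~\ref{commute}, glue, and concatenate with the homotopy of Lemma~\ref{adjusted} taking $\Bh_n$ to $\Bh'_n$; Case $\mathscr{B}$ is handled in parallel on the pieces $Y^0_i$. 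The one point the paper addresses that you omit is the situation where level $n$ (necessarily Case $\mathscr{B}$) immediately follows a Case $\mathscr{C}$ level: there $\Delta_{n,0}$ was replaced by $S_n^{-1}(\BE_n)$ in the construction, and the homotopies must be restricted to these smaller domains, which is a small but needed extra sentence since $\mathscr{C}\mathscr{B}$ does occur in admissible case strings.
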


\begin{proof}
By the definition of $\Delta_{n,i}$, $\mathbb{V}_{n,i}$, and $h_{n,i}$, it is enough to prove the lemma for $i=0$. For the other ones, one lifts this homotopy via $f^{i}$ and $\tilde{f}^{i}$, or defines them in a similar manner.

Recall that $\Delta_{n,0}$, $\psi_{n,0}$, and $h_{n, 0}$ are the
lifts of $\mathbf{\Delta}_{n, 0}$, $\Bpsi_n$, and $\Bh'_{n, 0}$
under the straightening maps. First we introduce a homotopy between $\Bpsi_n$ and $\Bh'_{n,0}$, relative the Julia sets $\BJ_{1,i}$ of $\Bf_{c_n}$, on the dynamic planes of $\Bf_{c_{n}}$ and $\Bf_{\tilde{c}_{n}}$. Then, we lift
this homotopy to the dynamic planes of $\rr^n f$ and $\rr^n \tilde{f}$ by the straightening maps. Recall
that in the construction, $\Bh'_{n, 0}$ is an adjustment of $\Bh_{n,0}$ through a homotopy 
relative the little Julia sets $\BJ_{1,i}$. Thus, to prove the lemma, we need only show that 
$\Bh_{n,0}$ in homotopic to $\Bpsi_n$ relative the little Julia sets $\BJ_{1,i}$.

First assume that level $n$ belongs to Case $\mathscr{A}$. The idea of the
proof, presented below in detail, is to partition $\BDelta_{n,0}$, by means of rays and
equipotential arcs, into several topological disks and an annulus such
that $\Bpsi_n$ and $\Bh_{n,0}$ are identical on the boundaries of these domains.

Recall the puzzle piece $Q^{\chi_n}_{n,0}$ (where $Q^{\chi_n}_{n, 0}
=Y^{q_{\chi_n}}_{0}$) introduced in this case. 
The equipotential $\Bf_{c_n}^{-\chi_n}(E^\eta)$, and
the rays bounding $Q^{\chi_n}_{n,i}$ up to $\Bf_{c_n}^{-\chi_n}(E^\eta)$, for $i=0, 1, \dots, t_{n}-1$, 
cut the domain $\mathbf{\Delta}_{n, 0}$ into
the annulus $\mathbf{\Delta}_{n, 0} \setminus \Bf^{-\chi_n}(U(\eta))$
and several topological disks. The topological disks which do not intersect the
little Julia sets $\BJ_{1,i}$, the puzzle pieces $Q^{\chi_n}_{n, i}$, and the 
remaining annulus $U(\eta) \setminus f^{-\chi_n}(U(\eta))$ form the appropriate partition.
By Theorem \ref{pseudo-conj}, $\Bh_{n,0}$ and $\Bpsi_n$ are identical on
the boundaries of these domains. 
Indeed, $\Bpsi_n$ is the identity in the B\"{o}ttcher coordinates, and the pseudo-conjugacy
$h_{n, 0}$ obtained in Theorem \ref{pseudo-conj} also matches the B\"{o}ttcher
marking. This proves that the two maps are homotopic outside of the
puzzle pieces $Q^{\chi_n}_{n,i}$ relative $\cup_i \partial Q^{\chi_n}_{n,i}$. 

To define a homotopy inside $Q^{\chi_n}_{n,0}$, recall that we started with a q.c.\ 
homeomorphism $g: Q^{\chi_n}_{n,0} \setminus P^{\chi_n}_{n,0}\to \tilde{Q}^{\chi_n}_{n,0} \setminus \tilde{P}^{\chi_n}_{n,0}$, which was
homotopic to $\Bpsi_n$ relative $\partial (Q^{\chi_n}_{n,0} \setminus P^{\chi_n}_{n,0})$. 
Hence, the lift of $g$ from $A^k_n$ to $\tilde{A}^k_n$, for $k=1,2,3,\dots$, considered in Case $\mathscr{A}$, is homotopic to $\Bpsi_n$ relative $\partial A^k_n$. 
As the two maps are identical on $J_{n+1,0} \subset Q^{\chi_n}_{n,0}$, they glue together to define a homotopy between $g$ and $\Bpsi_n$ on $Q^{\chi_n}_{n,0}$ relative $\partial Q^{\chi_n}_{n,0}\cup J_{n+1,0}$. 

The same argument applies to all other domains $Q^{\chi_n}_{n,i}$ as well.

If level $n$ belongs to Case $\mathscr{B}$, we repeat the above argument on each puzzle piece $Y^0_i$, for $i=0,1,2, \dots, \frac{t_n}{t_{n-1}}-1$.

If level $n$ follows a Case $\mathscr{C}$, one needs to restrict the above homotopies to the smaller domains 
$\BE_{n-1}$ (by the argument at the end of \text{Case $\mathscr{C}$}).
\end{proof}

Assume that level $n$ belongs to Case $\mathscr{A}$ or $\mathscr{B}$, and it follows a Case $\mathscr{A}$ or $\mathscr{B}$. Consider the uniformizations \label{uniforms}
\begin{align*}
\phi_1:A(s) \ra (\mathbb{V}_{n, 0}\setminus J_{n, 0}), \quad &
\phi_2:A(r) \ra (\Delta_{n, 0}\setminus J_{n, 0})\\ 
\widetilde{\phi}_1: A(\tilde{s}) \ra (\mathbb{\widetilde{V}}_{n,0}\setminus \widetilde{J}_{n, 0}),\quad & 
\widetilde{\phi}_2: A(\tilde{r}) \ra (\widetilde{\Delta}_{n, 0}\setminus \widetilde{J}_{n, 0})
\end{align*}
for some constants $s>r$ and $\tilde{s}> \tilde{r}$. The q.c.\ homeomorphisms 
$h_{n-1,0}:\mathbb{V}_{n,0}\setminus J_{n,0}\ra \mathbb{\widetilde{V}}_{n,0}\setminus \widetilde{J}_{n,0}$ 
and $h_{n,0}:\Delta_{n,0}\setminus J_{n,0}\ra \widetilde{\Delta}_{n,0}\setminus \widetilde{J}_{n,0}$ 
lift via $\phi_i$ and $\widetilde{\phi}_i$, $i=1,2$, to q.c.\ homeomorphisms 
$\hat{h}_{n-1, 0}:A(s) \ra A(\tilde{s})$ and $\hat{h}_{n, 0}: A(r) \ra A(\tilde{r})$, respectively, with
the same dilatations. By composing these uniformizations with some rotations, if necessary, we may 
assume that the point one is mapped to the point one under $\hat{h}_{n-1,0}$ and $\hat{h}_{n,0}$. 
Denote the wrapping number of the image of the line segment $[1,s]$ under $\hat{h}_{n-1,0}$
by $\omega_{1,n}$, and denote the wrapping number of the image of the line segment $[1,r]$ under 
$\hat{h}_{n,0}$ by $\omega_{2,n}$.  
By Proposition~\ref{spiral}, the absolute values of $\omega_{1,n}$ and $\omega_{2,n}$ are bounded by some 
constant depending only on $\eps$.
Define $k_n$ as $\omega_{1,n}-\omega_{2,n}$. Let $g_n':D_s\setminus D_r \ra D_{\tilde{s}}\setminus D_{\tilde{r}}$ be a gluing of $\hat{h}_{n,0}: \partial D_r \ra \partial D_{\tilde{r}}$ and
$\hat{h}_{n-1,0}:\partial D_s\ra \partial D_{\tilde{s}}$ with $k_n$ twists, using Lemma~\ref{adjust}. 
With this choice of gluing, the curve $\hat{h}_{n,0}[1,r]\cup g'_n[r,s]$ is homotopic to the curve $\hat{h}_{n-1,0}[1,s]$ inside $D_s\setminus D_1$ relative the boundary points on $\partial (D_s\setminus D_1)$.
Therefore, the map obtained from gluing  $\hat{h}_{n,0}$ to $g_n'$ is homotopic to 
$\hat{h}_{n-1,0}: A(s)\ra A(\tilde{s})$ relative the boundary conditions. 
Let $g_n$ denote the lift of $g_n'$ via $\phi_1$ and $\tilde{\phi}_1$. This homotopy lifts to a homotopy 
between $h_{n-1,0}$ and the map obtained from gluing $g_n$ to $h_{n,0}$.

Before we define $k_n$ in other cases, we need the following extension.
\begin{lem} \label{homos2}
The q.c.\ homeomorphism $\Bh'_{n-1}$ introduced in Case $\mathscr{C}$ admits an 
extension through $\cup_j \BL_{n,j}$ satisfying the following properties
\begin{itemize}
\item[--] For every $j$, $\Bh'_{n-1}:\BL_{n,j}\setminus \BB_{2,j}\ra \widetilde{\BL}_{n,j}\setminus \widetilde{\BB}_{2,j}$ is q.c.\ with uniformly bounded dilatation depending only on $\eps$;
\item[--] $\Bh'_{n-1}=\Bpsi_{n-1}$ on $\cup_j\BB_{2,j}$; 
\item[--] $\Bh'_{n-1}$ is homotopic to $\Bpsi_{n-1}$ relative $\cup_{j} \BB_{2,j}$. 
\end{itemize}
\end{lem}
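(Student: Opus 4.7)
The plan is to extend $\Bh'_{n-1}$ across each hole $\BL_{n,j}$ by declaring it equal to $\Bpsi_{n-1}$ on the bouquet $\BB_{2,j}$ and quasiconformally interpolating across the gap $\BL_{n,j}\setminus \BB_{2,j}$. The existing $\Bh'_{n-1}$ on $\partial \BL_{n,j}$ matches the B\"ottcher marking, and $\Bpsi_{n-1}$ is by construction the extension of the identity in B\"ottcher coordinates through $J(\Bf_{c_{n-1}})\supset \BB_{2,j}$, so the two prescribed boundary values agree on the ray landing points in $\partial \BB_{2,j}$: the definition is consistent along the skeleton where outer and inner data meet.

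I first handle the case $j=0$ and then pull back by the univalent iterates $\Bf_{c_{n-1}}^{j}:\BL_{n,0}\to \BL_{n,j}$ to obtain the extensions for $j\geq 1$ with the same dilatation. For $j=0$, I use the intermediate domain $\BL'_{n,0}$ from \eqref{annuli} to split $\BL_{n,0}\setminus \BB_{2,0}$ into the outer annulus $\BL_{n,0}\setminus \overline{\BL'_{n,0}}$ and the inner multiply connected region $\BL'_{n,0}\setminus \BB_{2,0}$; by Lemma \ref{bouquet} and \eqref{annuli}, both pieces have moduli uniformly bounded above and away from zero, comparable to their tilde counterparts. On the outer annulus I apply Lemma \ref{adjust} to extend $\Bh'_{n-1}|_{\partial \BL_{n,0}}$ inward, leaving the number of twists as a free integer. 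On the inner region I uniformize the finite-type Riemann surface whose boundary reflects the petal decomposition of the bouquet, and again invoke Lemma \ref{adjust} to interpolate between the value on $\partial \BL'_{n,0}$ coming from the outer step and $\Bpsi_{n-1}$ on $\BB_{2,0}$.

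The homotopy assertion is forced by a correct choice of the free twists. On each petal $\BK_{2,i}$ of $\BB_{2,0}$, $\rr^{2}\Bf_{c_{n-1}}$ is polynomial-like with connected Julia set, so Proposition \ref{commute} applied to $\Bpsi_{n-1}^{-1}\circ \Bh'_{n-1}$ on a neighborhood of $\BK_{2,i}$ minus its Julia set shows that the two candidate boundary parametrizations differ by a B\"ottcher rotation of angle $2\pi\ell_{i}/(d-1)$ for some integer $\ell_{i}$. Choosing the twist parameters in the gluings of Lemma \ref{adjust} to cancel the $\ell_{i}$ petal by petal produces an extension homotopic to $\Bpsi_{n-1}$ relative $\cup_{j}\BB_{2,j}$. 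The absolute values of the required twists are controlled by the compactness argument underlying Proposition \ref{spiral}, so the dilatation of the final extension depends only on $\eps$ and the class $\mathcal{SL}$.

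The main obstacle is controlling the homotopy class in the presence of the bouquet's wedge-like topology: the complement $\BL_{n,0}\setminus \BB_{2,0}$ has one outer boundary but $t_{n+1}/t_{n}$ inner pieces meeting at the common touching point $\alpha_{n}$, and the fundamental group relative $\BB_{2,0}$ has a generator per petal, each carrying its own twist obstruction to be matched via Proposition \ref{commute}. Once all these petal-level bookkeeping choices are made compatibly, the rest reduces to the same annulus-extension machinery that was used to close up Case $\mathscr{A}$ after the pseudo-conjugacy was constructed.
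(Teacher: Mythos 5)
Your proposal has a genuine gap at the innermost step. You propose to extend across the inner region $\BL'_{n,0}\setminus \BB_{2,0}$ by invoking Lemma~\ref{adjust} to interpolate between the value on $\partial \BL'_{n,0}$ and $\Bpsi_{n-1}$ on $\BB_{2,0}$. But Lemma~\ref{adjust} requires the boundary homeomorphisms to have q.c.\ extensions to neighborhoods of the boundary circles; the inner boundary here is the fractal frontier of a bouquet of Julia sets, and $\Bpsi_{n-1}$ restricted to it carries no such regularity. No amount of uniformization or petal bookkeeping fixes this: a q.c.\ map on the annulus cannot simply be dictated to take prescribed non-q.c.\ boundary values on a fractal inner boundary. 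Incidentally, the topological description you give is also off — since each $\BB_{2,j}$ is connected, $\BL'_{n,0}\setminus\BB_{2,0}$ is an annulus, not a region with ``one generator per petal''; and your invocation of Proposition~\ref{commute} to $\Bpsi_{n-1}^{-1}\circ\Bh'_{n-1}$ on a neighborhood of each $\BK_{2,i}$ is circular, because $\Bh'_{n-1}$ has not yet been defined inside $\BL_{n,0}$ at that stage.

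The missing idea is that the inner map must be constructed \emph{dynamically}, not by interpolation from the boundary. The paper builds a q.c.\ homeomorphism $g_n$ from the fundamental annulus $S_{n-1}(U_{n,0}\setminus V_{n,0})$ to its tilde counterpart that conjugates the first renormalizations on the common boundary, then lifts $g_n$ iteratively via $\rr\Bf_{c_{n-1}}$ and $\rr\Bf_{\tilde c_{n-1}}$ to obtain a conjugacy on $S_{n-1}(U_{n,0})\setminus \BJ_{1,0}$; Proposition~\ref{commute} (applied to $g_n$, which now genuinely commutes with the dynamics) extends this onto $\BJ_{1,0}\supset\BB_{2,0}$ as $\Bpsi_{n-1}$, automatically with the right boundary behavior and uniform dilatation. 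After a Lemma~\ref{adjusted}-type adjustment so that $g_n$ maps $\BL'_n$ to $\widetilde{\BL}'_n$, the only remaining gluing is across the middle annulus $\BL_n\setminus\BL'_n$, which has smooth boundaries on both sides, so Lemma~\ref{adjust} does apply there; the twist count is controlled by the wrapping number of a single reference curve $\gamma$ from $\BB_{2,0}$ to $\partial Y^0_0$ together with Proposition~\ref{spiral}, giving the homotopy statement. Your outer-annulus step and the overall philosophy of choosing twists to hit the correct homotopy class are on the right track, but without the dynamical construction of the map near the bouquet the argument does not close.
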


\begin{proof} First we extend the map through $\BL_{n,0}$.
Consider the fundamental annuli  $S_{n-1}(U_{n,0} \setminus V_{n,0})$ and 
\text{$\widetilde{S}_{n-1}(\widetilde{U}_{n,0} \setminus \widetilde{V}_{n,0})$}
for $\rr\Bf_{c_{n-1}}$ and $\rr\Bf_{\tilde{c}_{n-1}}$. 
Let 
\[g_{n}: S_{n-1}(U_{n,0} \setminus V_{n,0}) \ra
\widetilde{S}_{n-1}(\widetilde{U}_{n,0} \setminus \widetilde{V}_{n,0})\] 
be a q.c.\ homeomorphism with $\rr \Bf_{\tilde{c}_{n-1}} \circ g_n= g_n\circ \rr\Bf_{c_{n-1}}$ on $\partial (S_{n-1}(V_{n,0}))$. 
Lifting $g_n$ onto the preimages of these annuli, via $\rr \Bf_{c_{n-1}}$ and $\rr\Bf_{\tilde{c}_{n-1}}$, 
we obtain a q.c.\ homeomorphism, still denoted by $g_n$, from $S_{n-1}(U_{n,0})\setminus J(\rr^1(\Bf_{c_{n-1}}))$ to
$\tilde{S}_{n-1}(\tilde{U}_{n,0})\setminus J(\rr^1(\Bf_{\tilde{c}_{n-1}}))$. By Lemma~\ref{commute}, $g_n$ (or some
rotation of it) can be extended onto $J(\rr\Bf_{c_{n-1}})=\BJ_{1,0}$ as $\Bpsi_{n-1}$. 
Moreover, these two maps are homotopic relative $\BJ_{1,0}$. 
By a similar argument as in Lemma~\ref{adjusted}, we adjust $g_n$, through a homotopy relative $\BB_{2,0}$, to a q.c.\ homeomorphism mapping $\BL'_{n}$ to $\widetilde{\BL}'_{n}$. 

Consider the three annuli $Y_0^0 \setminus \BL_{n}$, $\BL_{n} \setminus \BL'_{n}$, and $\BL'_{n}\setminus \BB_{2,0}$, as well as the corresponding tilde ones. We have 
\begin{align*}
\Bh'_{n-1}:Y_0^0 \setminus \BL_{n}\ra \widetilde{Y}_0^0 \setminus \widetilde{\BL}_{n},\text{ and}\quad & g_n:\BL'_{n} \setminus \BB_{2,0} \ra  \widetilde{\BL}'_{n} \setminus \widetilde{\BB}_{2,0}.
\end{align*}
To find a gluing of these two maps on the middle annulus $\BL_n\setminus\BL'_n$, we use the above argument to find the right number of twists on this annulus. Consider a curve $\gamma$
connecting a point $a \in\BB_{2,0}$ to a point $d \in \partial Y_0^0$ such that it intersects each of $\partial\BL'_{n}$ and $\partial\BL_{n}$ only once at points denoted by $b$ and $c$, respectively. 
Let us denote by $\gamma_{ab}$, $\gamma_{bc}$, and $\gamma_{cd}$ each segment
of this curve cut off by these four points. The wrapping number $\omega(\Bpsi_{n-1}(\gamma))$ is uniformly bounded depending only on $\mathcal{SL}$ condition. That is because $\Bpsi_{n-1}$ depends continuously on $\tilde{c}_{n-1}$,  and $\tilde{c}_{n-1}$ belongs to a compact set of parameters. 
Therefore, by Proposition~\ref{spiral}, $\omega(\Bpsi_{n-1}(\gamma))-\omega(\Bh_{n-1}(\gamma_{cd}))-\omega(g_n(\gamma_ {ab}))$
is uniformly bounded depending only on $\eps$ and the class $\mathcal{SL}$. 
If we glue $\Bh_{n-1}$ to $g_n$ by such a number of twists (Lemma~\ref{adjust}), the resulting map will be
homotopic to $\Bpsi_{n-1}$ relative $\BB_{2,0}\cup \partial Y_0^0$. 

Similarly, one extends $\Bh'_{n-1}$ onto the other topological disks $\BL_{n,j}$.
\end{proof}

If a Case $\mathscr{C}$ follows a Case $\mathscr{A}$ or a Case $\mathscr{B}$, the number of twists $k_n$
is defined as in the proof of the above lemma. If level $n-1$ belongs to Case $\mathscr{C}$ 
and level $n$ is any of the three cases, we define $k_n$ using the uniformizations of the annuli $\mathbb{V}_{n,0},\setminus B_{n, 0}$, $\Delta_{n,0}\setminus B_{n,0}$, $\widetilde{\mathbb{V}}_{n,0}\setminus \widetilde{B}_{n, 0}$, and $\widetilde{\Delta}_{n,0}\setminus \widetilde{B}_{n,0}$ instead of the uniformizations after the proof of 
Lemma~\ref{homos}.

The following elementary lemma, whose proof appears in the Appendix, is used to put together the homotopies on different levels. 

\begin{lem}\label{interpolation}
Let $U$ and $\widetilde{U}$ be closed annuli with outer boundaries $\gamma_1$ and $\tilde{\gamma}_1$ as well as 
inner boundaries $\gamma_2$ and $\tilde{\gamma}_2$, respectively. Also, let $h_1:\gamma_1 \ra \tilde{\gamma}_1$ 
be a homeomorphism and $h_2^t:\gamma_2 \ra \tilde{\gamma}_2$, for $t \in [0, 1]$, be a continuous family of homeomorphisms. Any continuous gluing $G^0:U\ra \widetilde{U}$ of $h_1$ and $h_2^0$ extends to a continuous family of gluing $G^t:U\ra \widetilde{U} $ of $h_1$ and $h_2^t$, for $t\in[0,1]$.
\end{lem}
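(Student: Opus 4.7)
The plan is to reduce the statement to a radial interpolation on a standard round annulus, then transport the homotopy back by the given initial gluing.

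First, I would fix an identification $U \cong S^1 \times [0,1]$ so that $\gamma_2$ corresponds to $S^1 \times \{0\}$ and $\gamma_1$ to $S^1 \times \{1\}$, and similarly for $\widetilde{U}$ (such a parameterization exists because $U$ and $\widetilde{U}$ are closed topological annuli with distinguished boundary components). Next, I would define a continuous family of self-homeomorphisms $\hat{H}^t:U\to U$ as follows. For each $t\in[0,1]$, let $\eta^t:\gamma_2\to \gamma_2$ be the self-homeomorphism $\eta^t:=(h_2^0)^{-1}\circ h_2^t$, so that $\eta^0=\mathrm{id}_{\gamma_2}$ and $t\mapsto \eta^t$ is continuous. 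In the chosen coordinates, extend $\eta^t$ radially to $U$ by the Alexander-type formula
\[
\hat{H}^t(\theta,s):=\bigl((1-s)\cdot \eta^t(\theta,0)+s\cdot \theta,\; s\bigr),
\]
where the convex combination is taken in the universal cover of $S^1$ (this only affects the angular component smoothly). For each fixed $t$ this is a homeomorphism of $U$ that is the identity on $\gamma_1$ and equals $\eta^t$ on $\gamma_2$, and the family is jointly continuous in $(t,z)$ with $\hat{H}^0=\mathrm{id}_U$.

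Now I would simply set
\[
G^t := G^0\circ \hat{H}^t : U\longrightarrow \widetilde{U}.
\]
By construction, $G^t$ is a continuous family of homeomorphisms of annuli, with $G^0$ recovered at $t=0$. On the outer boundary $\gamma_1$ one has $\hat{H}^t|_{\gamma_1}=\mathrm{id}$, so $G^t|_{\gamma_1}=G^0|_{\gamma_1}=h_1$. On the inner boundary $\gamma_2$ one has $\hat{H}^t|_{\gamma_2}=(h_2^0)^{-1}\circ h_2^t$, hence
\[
G^t|_{\gamma_2}=G^0\circ (h_2^0)^{-1}\circ h_2^t = h_2^0\circ (h_2^0)^{-1}\circ h_2^t=h_2^t,
\]
so each $G^t$ is indeed a gluing of $h_1$ and $h_2^t$, as required.

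The only point requiring care is verifying that the radial interpolation really yields a homeomorphism for each fixed $t$ and that joint continuity holds; this is where one has to be mildly careful about working in the universal cover of $S^1$ so that ``$(1-s)\eta^t(\theta,0)+s\theta$'' makes sense and descends to a well-defined circle map for each $s\in[0,1]$. Once this minor bookkeeping is done, the rest of the argument is formal and no further analytic input (such as quasi-conformality or bounded dilatation) is needed, which matches the purely topological nature of the statement.
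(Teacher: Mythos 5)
Your argument is correct, and it is essentially the same as the paper's: both reduce the problem to a linear (Alexander-trick) interpolation in the universal cover of the annulus and then correct by composing with $G^0$. The only cosmetic difference is that you pre-compose $G^0$ with an ambient isotopy built from the ``defect'' $\eta^t=(h_2^0)^{-1}\circ h_2^t$ (which is the identity at $t=0$), whereas the paper first linearly interpolates $\hat h_1$ and $\hat h_2^t$ in the strip to get a family $T^t$ and then post-corrects by $(T^0)^{-1}\circ\hat G^0$; both hinge on the same monotonicity/equivariance check for the convex combination of periodic lifts.
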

{\samepage
\begin{prop}
The q.c.\ homeomorphism $H$, obtained from gluing all the maps $g_{n,i}$ and $h_{n,k}$,
is homotopic to the topological conjugacy $\Psi$ between $f$ and $\tilde{f}$ relative $\pc(f)$.
\end{prop}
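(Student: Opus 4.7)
The plan is to construct an isotopy $H^t:U_1\to\widetilde U_1$, $t\in[0,1]$, with $H^0=H$ and $H^1=\Psi$, satisfying $H^t|_{\pc(f)}=\Psi|_{\pc(f)}$ for every $t$. The isotopy is built piece by piece, following the natural decomposition of $U_1$ into the multiply connected domains $\Omega_{n(k),i}$, the gluing annuli $\mathbb{V}_{n(k),i}\setminus \Delta_{n(k),i}$, and the outermost region lying outside all $\mathbb{V}_{n(1),j}$.

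First, on the outermost region the two maps already coincide: both $H$ and $\Psi$ reduce to the identity in the B\"ottcher coordinates, $H$ because the gluing maps $g_{n(1),i}$ were defined to match the B\"ottcher marking on $\partial \mathbb{V}_{n(1),j}$, and $\Psi$ by definition as the extension of the B\"ottcher conjugacy. Next, on each $\Omega_{n,i}$ with $n=n(k)$, Lemma~\ref{homos} (or Lemma~\ref{homos2}, when level $n-1$ belongs to Case~$\mathscr{C}$) provides a homotopy between $h_{n,i}$ and $\psi_{n,i}$ relative $\partial \Omega_{n,i}$ and the little Julia sets of level $n+1$ inside $\Delta_{n,i}$. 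Since $\psi_{n,i}$ agrees with $\Psi$ on $\pc(f)\cap \overline{\Omega}_{n,i}$ (both extend the same B\"ottcher-type conjugacy through the little Julia sets), this yields a homotopy from $h_{n,i}$ to $\Psi|_{\Omega_{n,i}}$ relative $\partial\Omega_{n,i}\cup(\pc(f)\cap \Omega_{n,i})$.

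The heart of the argument lies in the annular pieces. On each gluing annulus $\mathbb{V}_{n,i}\setminus \Delta_{n,i}$, the twist number $k_n$ was prescribed precisely so that, after uniformization, the concatenation of $\hat{h}_{n,0}[1,r]$ with $g'_n[r,s]$ is homotopic rel endpoints to $\hat{h}_{n-1,0}[1,s]$ inside the round annulus $D_s\setminus D_1$. Pulling this homotopy back through the uniformizations $\phi_1,\widetilde{\phi}_1$ produces a homotopy from $g_{n,i}\cup h_{n,i}$ to $h_{n-1,i}$, rel the two boundary circles of $\mathbb{V}_{n,i}\setminus \Delta_{n,i}$. Concatenating this with the homotopy on $\Omega_{n-1,i}$ from the previous paragraph, and invoking Lemma~\ref{interpolation} to interpolate the inner boundary data as the homotopy sweeps through nested scales, we obtain a system of compatible homotopies that fit together into a global isotopy $H^t$ on $U_1\setminus \pc(f)$.

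It remains to extend $H^t$ continuously to $\pc(f)$. Since the nests of domains $\mathbb{V}_{n(k),i}$ shrink to single points of $\pc(f)$, by the uniform lower bound on $\mod(\mathbb{V}_{n,i}\setminus \Delta_{n,i})$ from Lemma~\ref{gluspace} together with the Gr\"otzsch-type argument already used for $H$, and since every local homotopy above was performed relative the little post-critical sets, the map $H^t$ must fix $\pc(f)$ pointwise for every $t$. The main technical obstacle will be organizing the induction so that the homotopies on the three types of pieces remain compatible simultaneously across all levels and across the three cases $\mathscr{A},\mathscr{B},\mathscr{C}$, since the definition of $\psi_{n,i}$, the domain adjustment in Lemma~\ref{adjusted}, and the wrapping-number bookkeeping of Proposition~\ref{spiral} each behave slightly differently in each case. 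Lemma~\ref{interpolation} is the key tool that lets a homotopy already fixed on an inner boundary be propagated outward through an annulus while preserving the outer boundary data, and it is applied iteratively from the outside inward.
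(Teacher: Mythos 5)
Your proposal correctly identifies all the ingredients the proof needs — Lemmas~\ref{homos}, \ref{homos2}, \ref{interpolation}, the twist-number choices, the shrinking nests, and the decomposition of the plane into $\Omega$'s and gluing annuli. But the organizational device that makes the paper's proof actually close is missing, and in its place you have made claims that do not hold. Specifically, you assert that Lemma~\ref{homos} gives a homotopy from $h_{n,i}$ to $\psi_{n,i}$ \emph{relative $\partial\Omega_{n,i}$}. It does not: the lemma's homotopy is relative only to the little Julia sets $J_{n+1,j}$, which sit deep inside the holes of $\Omega_{n,i}$, and the homotopy genuinely moves both the outer circle $\partial\Delta_{n,i}$ and the inner circles $\partial V_{n+1,\cdot}$. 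Similarly, your claim that the twist choice yields a homotopy on the gluing annulus $\mathbb{V}_{n,i}\setminus\Delta_{n,i}$ ``rel the two boundary circles'' is not what the twist numbers give directly; they only guarantee that the gluing of $h_{n-1,0}$ with \emph{itself} on both boundaries is homotopic rel boundaries to $h_{n-1,0}$ restricted to the annulus. Finally, you implicitly identify $\psi_{n,i}$ with $\Psi\vert_{\Omega_{n,i}}$, but Lemma~\ref{homos} homotopes $h_{n,i}$ to $\psi_{n,i}$, and equality of $\psi_{n,i}$ with the global conjugacy $\Psi$ (even up to rotation, via Proposition~\ref{commute}) is established only at level $1$.

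The paper resolves all of these difficulties by \emph{not} trying to homotope each piece directly to $\Psi$. It defines the partial gluings $H_n$ (using the data through level $n$ only) and proves two things: $H_1$ is homotopic to $\Psi$ relative $\cup_i J_{1,i}$, and for every $n>1$, $H_{n-1}$ is homotopic to $H_n$ relative $\cup_i J_{n+1,i}$. For the inductive step, $H_{n-1}$ and $H_n$ already agree outside $\cup_j\mathbb{V}_{n,j}$; on $\Delta_{n,0}$ both $h_{n,0}$ and $h_{n-1,0}$ are homotopic to $\psi_{n,0}$ relative little Julia sets, hence to each other by a homotopy $h_n^t$ that merely preserves (does not fix pointwise) the circle $\partial\Delta_{n,0}$; Lemma~\ref{interpolation} then produces a matching one-parameter family of gluings $G_n^t$ on the annulus with $h_{n-1,0}$ pinned on the outer circle and $h_n^t$ tracked on the inner one, and the twist choice guarantees $G_n^1$ is homotopic rel boundaries to $h_{n-1,0}$ itself. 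Chaining the homotopies over $[t_n,t_{n+1}]$ and using that the diameters of $\mathbb{V}_{n,i}$ shrink to zero gives a single homotopy from $\Psi$ to $H$ relative $\pc(f)$. Your closing remark that ``organizing the induction so that the homotopies on the three types of pieces remain compatible'' is ``the main technical obstacle'' is exactly right — but that obstacle \emph{is} the proof, and the $H_n$-to-$H_{n-1}$ scheme is the missing idea that dissolves it.
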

}
\begin{proof}
Let $H_n$ denote the homeomorphism obtained from gluing the maps $g_{i,j}$ and $h_{k,l}$, with $i$ and $k$ less than or equal to $n$.

First, we claim that 
\begin{itemize}
\item[--] the maps $H_1$ and $\Psi$ belong to the same homotopy class of homeomorphisms from $\cc\setminus \cup_i J_{1,i}$ 
to $\cc\setminus \cup_i \widetilde{J}_{1,i}$, and 
\item[--] for every $n>1$, $H_{n-1}$ and $H_n$,  belong to the same homotopy class of homeomorphisms from $\cc\setminus \cup_i {J}_{n+1,i}$ to $\cc\setminus \cup_i \widetilde{J}_{n+1,i}$.
\end{itemize}

By definition, $H_1$ is equal to $h_{1,0}$ which is homotopic to $\psi_{1, 0}$ relative $\cup_i J_{1,i}$, 
by Lemma~\ref{homos} or Lemma~\ref{homos2}. Therefore, $H_1$ is homotopic to $\Psi$, relative $\cup_i J_{1,i}$, 
by Proposition~\ref{commute}. 

Recall that $H_{n-1}$ and $H_n$ are identical on the complement of $\cup_j\mathbb{V}_{n,j}$. 
On $\Delta_{n,i}$, $H_{n-1}$ and $H_n$ are equal to $h_{n-1,i}$ and $h_{n,i}$, respectively.

The domain $\mathbb{V}_{n, 0}$ is partitioned into $\mathbb{V}_{n,0} \setminus \Delta_{n,0}$ and $\Delta_{n,0}$. 
On $\Delta_{n,0}$, the maps $h_{n,0}$ and $h_{n-1,0}$ are homotopic to $\psi_{n,0}$ relative $\cup_i J_{n+1,i}$, either by Lemma~\ref{homos} or \ref{homos2}. 
Thus, there exists a homotopy $h_n^t$, for $t$ in $[0,1]$, which starts with $h_{n,0}$, ends with 
$h_{n-1,0}$, and maps $\partial \Delta_{n,0}$ to $\partial \widetilde{\Delta}_{n,0}$, for all
$t\in [0,1]$. 
At time zero, consider the map $h_{n,0}$ on the
inner boundary of $\mathbb{V}_{n,0}\setminus \Delta_{n,0}$, $h_{n-1,0}$ on the outer boundary
of this annulus, and the gluing $G_n^0:=g_{n,0}$ on the annulus. Applying
Lemma~\ref{interpolation} with $h_{n-1,0}$ on the outer
boundary and $h^t_n$ on the inner boundary, we obtain a continuous
family of gluings $G^t_n:\mathbb{V}_{n,0}\setminus \Delta_{n,0}\ra \widetilde{\mathbb{V}}_{n,0}\setminus \widetilde{\Delta}_{n,0}$ between them. The homeomorphism $G^1_n$ is a gluing of 
$h_{n-1,0}:\partial \mathbb{V}_{n,0} \ra \partial \widetilde{\mathbb{V}}_{n,0}$ and 
$h_{n-1,0}:\partial \Delta_{n,0}\ra \partial \widetilde{\Delta}_{n,0}$. 
But, $G^1_n$ must be homotopic to $h_{n-1,0}$ on $\mathbb{V}_{n,0}\setminus \Delta_{n,0}$ relative the boundaries. That is because these two maps send
a curve joining the two different boundaries to two curves (joining the two boundaries) which are 
homotopic in the annulus $\widetilde{\mathbb{V}}_{n,0}\setminus \widetilde{\Delta}_{n,0}$ relative end points. This comes from our choice of the number of twists for the gluing maps. This completes the proof of the claim.

Let $t_0=0 <t_1<t_2< \cdots$, be an increasing sequence in 
$[0,1]$ converging to $1$. Assume that $H^t$, for $t$ in $[t_0,t_1]$, denotes the homotopy
obtained above between $\Psi$ and $H_1$ relative the little Julia sets $J_{1,i}$. Also, let $H^t$, 
for $t \in [t_n, t_{n+1}]$, $n=1,2,\dots$, denote the homotopy between $H_n$ and
$H_{n+1}$ relative the little Julia sets of level $n+2$.

It follows from the construction that for any fixed $z$, $H^t(z)$ eventually stabilizes  
and equals to $H(z)$. Indeed, the \textit{a priori} bounds assumption implies that the diameters of the topological disks $\mathbb{V}_{n,i}$ tend to zero as $n \ra \infty$. 
Therefore, the supremum distance between $H^t$ and $H$ tends to zero as $t\ra 1$. 
We conclude that $H^t$, for $t$ in $[0,1]$, defines a homotopy between $\Psi$ and $H$ relative $\pc(f)$. 
Hence, $H$ is a Thurston conjugacy between $f$ and $\tilde{f}$.
\end{proof}

\subsection{Promotion to hybrid conjugacy}
\begin{prop} \label{openclosed}
Suppose that all infinitely renormalizable unicritical polynomials in a given combinatorial class $\tau = \{ \M_1, \M_2, \M_3, \dots\}$ satisfying the $\mathcal{SL}$ condition, enjoy \textit{a priori} bounds. Then q.c.\ conjugate maps in this class are hybrid conjugate.
\end{prop}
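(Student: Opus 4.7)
The strategy is a deformation argument combined with an open-closed decomposition inside the combinatorial class $\tau$. Given two combinatorially equivalent parameters $c,\tilde c\in \tau$, Theorem~\ref{THM} supplies a q.c.\ conjugacy $h:P_c\to P_{\tilde c}$; let $\mu=\bar\partial h/\partial h$, which is $P_c$-invariant. Applying the Measurable Riemann Mapping Theorem to the family $\mu_t:=t\mu$, $t\in[0,1]$, one gets a continuous family of normalized q.c.\ homeomorphisms $h_t$ with $h_0=\mathrm{id}$ and $h_1=h$, each conjugating $P_c$ to a unicritical polynomial $P_{c(t)}$. Thus $t\mapsto c(t)$ is continuous with $c(0)=c$, $c(1)=\tilde c$. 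Since $h_t$ is a topological conjugacy, each $P_{c(t)}$ has the same combinatorics as $P_c$, hence lies in $\tau$, and by hypothesis satisfies \textit{a priori} bounds.

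Set $E:=\{t\in[0,1]:P_{c(t)}\text{ is hybrid conjugate to }P_c\}$. The remark following Theorem~\ref{THM} says that hybrid equivalence between two unicritical polynomials in the normal form $z^d+c$ forces the parameters to differ by a $(d-1)$-th root of unity; hence $E$ is the $c(\cdot)$-preimage of the finite set $\{\lambda c:\lambda^{d-1}=1\}$ under the continuous function $t\mapsto c(t)$, so $E$ is closed and contains $0$. By the connectedness of $[0,1]$ it suffices to prove that $E$ is also open; then $\tilde c=c(1)\in E$ and the proposition follows.

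For openness at $t_0\in E$, after an affine change of coordinate we may assume $c(t_0)=c$. For $t$ near $t_0$, the composition $g_t:=h_t\circ h_{t_0}^{-1}$ is a q.c.\ conjugacy from $P_c$ to $P_{c(t)}$ whose Beltrami coefficient $\nu_t$ has $L^\infty$-norm $O(|t-t_0|)$ and is $P_c$-invariant. Split $\nu_t=\nu_{t,K}+\nu_{t,D}$ according to whether the support meets the filled Julia set $K(P_c)$ or its basin of infinity, and integrate only $\nu_{t,D}$ by MRMT to obtain a hybrid conjugacy $g'_t$ from $P_c$ to a polynomial $P_{c'(t)}$; by the same remark, $c'(t)=\lambda(t)c$ for a root of unity $\lambda(t)$, and continuity forces $\lambda(t)=1$ near $t_0$, so $c'(t)=c$. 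Openness of $E$ therefore reduces to showing that the residual map $g_t\circ(g'_t)^{-1}:P_c\to P_{c(t)}$, a q.c.\ conjugacy whose Beltrami coefficient is supported on the Julia set, is conformal.

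The last step is the main obstacle, and in full generality it is exactly the content of McMullen's theorem on the absence of invariant line fields for infinitely renormalizable unicritical maps with \textit{a priori} bounds. However, the hypothesis that the entire class $\tau$ enjoys \textit{a priori} bounds permits the ``easier'' route promised in the introduction: Proposition~\ref{unifrom-pseudo-conj}, applied at every renormalization level of the modified principal nest, produces an alternative q.c.\ conjugacy $P_c\to P_{c(t)}$ whose dilatation depends only on the hyperbolic distance between $c(t)$ and $c$ in a truncated primary wake and on the uniform moduli provided by the hypothesis, and hence tends to $1$ as $t\to t_0$. Lifting this control through the tower of renormalizations yields an invariant Julia-supported Beltrami coefficient whose dilatation is majorized by any prescribed positive number, so it must vanish; the residual conjugacy is therefore conformal, which closes the open-closed loop and completes the argument.
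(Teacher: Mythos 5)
Your strategy, an open-closed argument inside $[0,1]$ along a Beltrami path, differs from the paper's, which performs the open-closed argument in the full parameter plane $\cc$: the paper assumes for contradiction that $P_1,P_2\in\tau$ are q.c.\ but not hybrid equivalent, takes $\Omega\subset\cc$ to be the set of parameters q.c.\ equivalent to $P_1$, shows $\Omega$ is closed (it coincides with the combinatorial class $\tau$, an intersection of nested connectedness-locus copies), and shows $\Omega$ is open because at any $P\in\Omega$ the failure of hybrid equivalence to $P_1$ forces the invariant Beltrami $\mu$ of the conjugacy to be nontrivial, so that the holomorphic one-parameter family $\lambda\mapsto\phi_\lambda$ (straightening $\lambda\mu$) sweeps out an open neighborhood of $P$ inside $\Omega$. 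The contradiction $\Omega=\cc$ finishes it. No invariant-line-field theorem enters anywhere.

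The gap in your argument is the openness of $E$. As you correctly note, it reduces to showing that a $P_c$-equivariant q.c.\ map whose Beltrami coefficient is supported on the Julia set is conformal, i.e.\ to the absence of invariant line fields, which is precisely McMullen's theorem. Your proposed replacement via Proposition~\ref{unifrom-pseudo-conj} does not work: that proposition produces a \emph{pseudo}-conjugacy (it fails to conjugate on the innermost critical puzzle piece), and its dilatation bound is a uniform constant in terms of the hyperbolic distance and the modulus $\mod(Q^1\setminus P^1)$ — nothing in the statement makes the dilatation tend to $1$ as $t\to t_0$. The appeal is also circular: the ``easier route'' promised in the introduction \emph{is} Proposition~\ref{openclosed} itself, so one cannot invoke it while proving it. Finally, there is a structural problem with the set $E$: it is the preimage under the continuous map $t\mapsto c(t)$ of the finite set $\{\lambda c:\lambda^{d-1}=1\}$, and the only way such a set can be both open and closed and contain $0$ is if $c(\cdot)$ is constant on $[0,1]$; showing this is exactly the conclusion you are after, so $E$ is nowhere dense unless the result is already known, and there is no room for an openness argument along the path. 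You should instead work in the ambient parameter plane $\cc$ as the paper does, where a holomorphic non-constant map $\lambda\mapsto c(P_\lambda)$ genuinely has open image.
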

\begin{proof}
Assume that there are polynomials $P_1$ and $P_2$ in $\tau$ which are q.c.\ equivalent but not hybrid equivalent. Define the set
\begin{align*}
\Omega :=&\{c\in \cc: P_c \text{ is q.c.\ equivalent to } P_1\}\\
        =&\{c\in \cc: P_c \text{ is q.c.\ equivalent to } P_2\}.
\end{align*}
The plan is to show that $\Omega$ is both open and closed in $\cc$, which is not possible.

Theorem~\ref{THM} implies that q.c.\ equivalence is the same as combinatorial equivalence in the class $\tau$. 
Since every combinatorial class is an intersection of a nest of closed sets (connectedness locus copies), $\Omega$ is closed.

Consider a point $P$ in $\Omega$. The polynomial $P$ is not hybrid equivalent to both of $P_1$ and $P_2$ by the  assumption. Assume that it is  not hybrid equivalent to $P_1$ (for the other case just change $P_1$ to $P_2$). 
Let $\phi_1:\cc \ra \cc$, be a $k$-q.c.\ homeomorphism with $\phi_1 \circ P=P_1 \circ \phi_1$. 
Replacing $P_1(z)$ by $\omega^{-1}P_1(\omega z)$, for some $(d-1)$-th root of unity $\omega$, if necessary, 
we may further assume that $\phi_1$ is the identity in the B\"ottcher coordinates. 
Pulling the standard complex structure $\mu_0$ on $\cc$ back under $\phi_1$, we obtain a nontrivial $P$-invariant complex structure $\mu$ on $\cc$ with dilatation bounded by $\frac{k-1}{k+1}$. 
Consider the family of complex structures $\mu_{\lambda}:=\lambda\cdot\mu$, for $\lambda$ in the disk of radius $\frac{k+1}{k-1}$ centered at zero 
$D(0,\frac{k+1}{k-1})\subseteq \mathbb{C}$. 

By the measurable Riemann mapping Theorem \cite{Ah66}, there exists a unique q.c.\ mapping $\phi_{\lambda}:\mathbb{C}\ra \mathbb{C}$, for every $\lambda \in D(0,\frac{k+1}{k-1})$, with $\phi_{\lambda}^*\mu_{\lambda}=\mu_0$, $\phi_\lambda(0)=0$, and $\phi_\lambda(z)=z+O(1)$. 
The map $P_\lambda:=\phi_{\lambda}^{-1}  \circ P \circ \phi_{\lambda}:\mathbb{C}\to \mathbb{C}$, for $\lambda\in D(0,\frac{k+1}{k-1})$, preserves the standard complex structure $\mu_0$. 
By Weyl's Lemma (\cite{Ah66}, Chapter II, Corollary $2$), $P_{\lambda}$  is holomorphic. 
As $P_{\lambda}$ is conjugate to $P$ by $\phi_\lambda$, $P_\lambda$ must be a unicritical polynomial of degree $d$, and $\phi_\lambda $ must map the critical value of $P$ to the critical value of $P_\lambda$.  
At $\lambda=1$ we obtain $P$, and at $\lambda=0$ we obtain $P_1$. 
By the analytic dependence of the solution of the measurable Riemann mapping Theorem on the complex structure, the 
family $P_\lambda$, for $\lambda \in D(0,\frac{k+1}{k-1})$, 
covers a neighborhood of $P$ in $\Omega$. 
That is because the critical value of $P_\lambda$ is equal to the $\phi_\lambda$ of the critical value of $P$, 
and $\phi_\lambda$ depends analytically on $\lambda$. This shows that $P$ is an interior point of $\Omega$, 
and, therefore, $\Omega$ is open. 
\end{proof}
\section{Dynamical description of the combinatorics}
Here we give a detailed dynamical description of the combinatorial classes mentioned in the Introduction. 
Let $P_c$ be an infinitely renormalizable unicritical polynomial with renormalizations $f_n:=\rr^n(P_c)$, $n=0,1,2,\dots$. Each $f_n$ is hybrid conjugate to a polynomial denoted by $\Bf_{c_n}$. 
Let $Y^1_0(n)$ denote the critical puzzle piece of level $1$ of $\Bf_{c_n}$.  
The dynamical meaning of a parameter $c$ satisfying the \textit{decoration} condition is that there exists a constant $M$ such that for every $n\geq 0$ there are integers $t_n$ and $q_n$, both bounded by $M$, with 
\begin{itemize}
\item[--] $\Bf_n^{kq_n}(0)\in Y^1_0(n)$, for every positive integer $k<t_n$, and
\item[--] $\Bf_n^{t_nq_n}(0)\notin Y^1_0(n)$. 
\end{itemize}
In particular, this condition implies that the number of rays landing at the dividing fixed point of $\Bf_{c_n}$ (denoted by $q_n$ here) is uniformly bounded independent of $n$.  

An infinitely renormalizable parameter is of \textit{bounded type} if the supremum of the relative return times $t_{n+1}/t_{n}$, where $\rr^n(P_c)$ is an appropriate restriction of $P_c^{t_n}$, is bounded. 
By definition, the class of maps satisfying the decoration condition contains the infinitely primitively renormalizable parameters of bounded type.

In Section~\ref{combinatorics}, we associated a sequence of maximal connectedness locus copies 
$\tau(f)=\M^1_d,\M^2_d,\dots$ to every infinitely renormalizable unicritical polynomial-like map $f$. 
Let $\pi_n(\tau(f)):=\M^{n}_d$. 
Define 
\begin{displaymath}
\tau(f,n):=\Bigg\{c\in \M_d\; \Bigg| \; \begin{array}{ll} \text{$P_c(z)=z^d+c$ is at least $n$ times }\\ \text{renormalizable, and }\\ \pi_i(\tau(f))=\pi_i(\tau(P_c)), \text{ for } i=1,2,\dots,n \; 
\end{array}
\Bigg\}.
\end{displaymath}

Given an infinitely renormalizable map $f$ and a sequence of integers $n_0=0< n_1< n_2< \cdots,$ we define the  sequence:
\begin{align*}
(\tilde{\tau}(f),\langle n_i \rangle)&:=\langle\tilde{\M}^{n_1},\tilde{\M}^{n_2},\ldots, \tilde{\M}^{n_k},\ldots\rangle,
\intertext{where}
\tilde{\M}^{n_k}&:=\tau(\rr^{n_{k-1}}f,n_k-n_{k-1}).
\end{align*}

Given a sequence of integers $n_0=0< n_1< n_2< \cdots,$ one can see that there is a one to one correspondence between the two sequences $\tau(f)$ and $(\tilde{\tau}(f),\langle n_i \rangle)$. Thus, one may take the latter one as the definition of the combinatorics of an infinitely renormalizable map.

Consider the main hyperbolic component of $\M_d$. There are infinitely many hyperbolic components of $\M_d$ attached to this component, called primary components. Similarly, there are infinitely many hyperbolic components,  secondary ones, attached to these primary components, and so forth. Consider the set of all hyperbolic components obtained this way, i.e., the ones that can be connected to the main hyperbolic component by a chain of hyperbolic components bifurcating one from another. The closure of this set, plus all possible bounded components of its complement, is called the \textit{molecule} $\mathscr{M}_d$.

An infinitely renormalizable map $f$ is said to satisfy the \textit{molecule condition}, if there exists a constant $\eta>0$ and an increasing sequence of positive integers $n_0=0< n_1< n_2< \cdots,$ such that for all $i\geq 1$
\begin{itemize}
\item[--]$\rr^{n_i}f$ is a primitive renormalization of $\rr^{n_i-1}f$, and 
\item[--]the Euclidean distance between $\tilde{\M}^{n_i}_d$ and $\mathscr{M}_d$ is at least $\eta$.
\end{itemize}

Note that for a map satisfying this condition, there may be infinitely many satellite renormalizable maps in the sequence $\langle \rr^nf \rangle$. However, the condition requires that there are infinitely many primitive levels with the corresponding relative connectedness locus copies uniformly away from $\mathscr{M}_d$. 
By a compactness argument, one can see that the decoration condition is stronger than the molecule condition.

For every $\eps \geq 0$, and every hyperbolic component of $\M_d$, there are at most finitely many limbs attached to this hyperbolic component with diameter bigger than $\eps$ (by the Yoccoz inequality on the size of limbs \cite{H}). This implies that for every $\eta>0$, all but a finite number of the secondary limbs are contained in the $\eta$ neighborhood of $\mathscr{M}_d$. 
This implies that the parameters satisfying the molecule condition also satisfy the $\mathcal{SL}$ condition. Therefore, combining with \cite{KL3} and \cite{KL2} we obtain the corollary stated in the Introduction.
\appendix
\section*{Appendix}
\begin{proof}[Proof of Proposition \ref{commute}]
Consider an external ray $R$ landing at a non-dividing fixed point $\beta_0$ of $f$. As $R$ is invariant under $f$, and $\phi$ commutes with $f$, $\phi(R)$ is also invariant under $f$. This implies that $\phi(R)$ lands at a non-dividing fixed point $\beta_j$ of $f$. Choose $\rho_j$ such that $\rho_j(\phi(R))$ lands at $\beta_0$. Let $\psi$ denote the map $\rho_j \circ \phi$, and $R'$ denote the ray $\rho_j(\phi(R))$. For such a rotation $\rho_j$, $\psi$ also commutes with $f$, and $R'$ is also invariant under $f$.

The external ray $R$ cuts the annulus $V_1 \setminus V_2$ into a quadrilateral $I_{0,1}$. The preimage 
$f^{-1}(I_{0,1})$ consists of $d$ quadrilaterals denoted by $I_{1,1},I_{1,2}, \ldots, I_{1,d},$ 
ordered clockwise starting with $R$. Similarly, the $f^{-n}(I_{0,1})$ produces $d^n$ quadrilaterals $I_{n,1},I_{n,2}, \ldots, I_{n,d^n}$ (ordered similarly). In the same way, the external ray $R'$ produces quadrilaterals denoted by $I'_{n,j}$, ordered clockwise starting with $R'$. We claim that the Euclidean diameter of $I_{n,j}$ (and $I'_{n,j}$) goes to zero as $n$ tends to infinity.

Denote $f^{-i}(V_1)$ by $V_{i+1}$, and let $d_{i+1}$ denote the hyperbolic metric on the annulus $V_{i+1}\setminus K(f)$. As $I_{n,j} \subseteq V_n$, and $\cap_n V_n=K(f)$, the quadrilaterals $I_{n,j}$ converge to the boundary of $V_1 \setminus K(f)$ as $n$ goes to infinity. To show that the Euclidean diameters of these quadrilaterals go to zero, it is enough to show that their hyperbolic diameters in $(V_1 \setminus K(f),d_1)$ stay bounded from above. 
Since $f^{n-1}:(V_n \setminus K(f),d_n) \ra (V_1 \setminus K(f),d_1)$ is an unbranched covering of degree $d^{n-1}$, it is a local isometry. As the closure of $f^{n-1}(I_{n,j})$ is a compact subset of $V_1 \setminus K(f)$, we conclude that $I_{n,j}$ has bounded hyperbolic diameter in $(V_n\setminus K(f),d_n)$. Finally, the contraction of the inclusion from \text{$(V_{n} \setminus K(f),d_n)$} into $(V_1 \setminus K(f),d_1)$ implies that $I_{n,j}$ has bounded hyperbolic diameter in $(V_1\setminus K(f),d_1)$. 

By a similar argument, one can show that the hyperbolic distance between $I_{n,j}$ and $I'_{n,j}$ in 
$(V_1 \setminus K(f),d_1)$ is uniformly bounded from above.

Since $\psi$ is a conjugacy, it sends $I_{n,j}$ to $I'_{n,j}$. Therefore, as $w$ converges to $K(f)$, $w$ and $\psi(w)$ belong to some $I_{n,j}$ and $I'_{n,j}$, respectively, with larger and larger values of $n$. 
Combining with the above argument, we conclude that the Euclidean distance between $w$ and $\psi(w)$ tends to zero as $w\ra K(f)$. 
This implies that $\psi$ extends through $K(f)$ as the identity.
\end{proof}

\begin{proof}[Proof of Lemma \ref{adjust}]
Let $\Pi_{r}:=\{z\mid 0< \Im(z)< \frac{1}{2\pi}\log r\}$, for $r>1$, denote the covering space of $A(r)$ with the deck transformation group generated by $z\ra z+1$. 
We may assume that $\log r$ and $\log r'$ are at least $6\pi$. Otherwise, one may rescale these strips by affine maps of the form $(x,y)\mapsto (x,ay)$ and continue with the following argument. 
 
The homeomorphisms $h_1$ and $h_2$ lift to $1$-periodic homeomorphisms 
\begin{align*}
&\hat{h}_1: \mathbb{R}+\frac{\Bi}{2\pi}\log r \ra \mathbb{R}+\frac{\Bi}{2\pi}\log r', \text{ and }
\hat{h}_2:\mathbb{R}\ra \mathbb{R}, \\
&\text{with }\hat{h}_2(0)=0 \text{ , and }\\
&\hat{h}_1(\frac{\Bi}{2\pi}\log r)=\theta(h_1(r))-\theta(h_2(1))+2\pi k+\frac{\Bi}{2\pi} \log r'. 
\end{align*}
As these maps have $K_2$-q.c.\ extensions to some neighborhoods of their domains of definition, 
they are quasi-symmetric with 
a constant $M(K_2)$ (See Theorem $1$ in \cite{Ah66}, Page $40$).   

To prove the lemma, it is enough to introduce a $1$-periodic q.c.\ mapping $h:\Pi_r\ra \Pi_{r'}$, matching $\hat{h}_1$ and $\hat{h}_2$ on the boundaries, and with uniformly bounded dilatation in terms of $K_1,K_2,k$, and $\delta$.  

Define $\phi: \Pi_\infty \ra \Pi_\infty$ as $\phi(x,y):= u(x,y)+ \Bi v(x,y)$, where
\begin{align*}
u(x,y):=& \frac{1}{2y}\int_{-y}^{+y} \hat{h}_2(x+t) \,dt,\\
v(x,y):=& \frac{1}{2y}\int_{0}^{y} (\hat{h}_2(x+t)-\hat{h}_2(x-t)) \,dt.
\end{align*}
It has been proved in \cite{Ah66}, Page $42$, that $\phi$ is a q.c.\ mapping with dilatation depending only 
on $M(K_2)$. Note that $\phi$ is $1$-periodic in the first variable. 
It follows from Lemma $3$ on Page $41$ of \cite{Ah66} that 
\[v(0,1)\leq \int_0^1 \hat{h}_2(t)\,dt-\frac{1}{2}\leq \frac{M(K_2)}{M(K_2)+1}\leq \frac{1}{2}\in [0,3].\]
Also,
\[v_x(x,1)=0, \text{ and } u_x(x,1)=1, \text{ for }-\infty <x<\infty.\]
This implies that $\phi$ maps the horizontal line through $\Bi$ to a horizontal line in $\Pi_{r'}$ as a  translation.
Similarly, one extends $\hat{h}_1$ to a q.c.\ homeomorphism $\psi$ of $\{z\mid \Im(z)\leq \frac{1}{2\pi}\log r\}$ such that it maps the horizontal line through $\frac{\Bi}{2\pi} \log r -1$ to a horizontal line in $\Pi_{r'}$ as a translation.    

Consider the map $H:\{z\mid 1\leq  \Im(z)\leq \frac{1}{2\pi}\log r -1\}\ra \Pi_{r'}$ defined by 
\[(x,y)\mapsto (1-y)\phi(x,1)+y \psi(x, \frac{1}{2\pi}\log r -1).\] 
The homeomorphism $H$ is affine with dilatation depending only on $k$. The homeomorphisms $\phi$ and $\psi$ are 
q.c.\ with dilatations depending only on $M(K_2)$. The homeomorphism 
\[\begin{cases} \phi(x,y) &\!\!\text{if $0 \leq x \leq 1$}\\
                H(x,y)    &\!\!\text{if $  1 \leq x \leq \frac{1}{2\pi}\log r-1$} \\
                \psi(x,y) &\!\!\text{if $  \frac{1}{2\pi}\log r-1 \leq x \leq \frac{1}{2\pi}\log r$}
   \end{cases}\]
is the desired q.c.\ mapping, with dilatation depending only on $k, K_1, \delta$, and $M(K_2)$.
\end{proof}

\begin{proof}[Proof of Lemma \ref{interpolation}]
As this is a purely topological statement, we may assume that $U$ and $\widetilde{U}$ are $D_2\setminus D_1$ whose   universal covering space is $\Pi:=\mathbb{R}\times (0,1)$.  
Lifting $h_1$, $h_2^t$, and $G^0$ via the projections from the closure of $\Pi$ to the closures of $U$ and $\widetilde{U}$, we obtain homeomorphisms $\hat{h}_1:\mathbb{R}\times \{0\}\ra \mathbb{R}\times \{0\} $, $\hat{h}_2^t:\mathbb{R}\times \{1\}\ra \mathbb{R}\times \{1\} $, and $\hat{G}^0:\Pi\ra \Pi$, all $1$-periodic in the first coordinate. 
Composing all the above maps with a rotation if necessary, we may assume that $\hat{h}_1(0,0)=(0,0)$. 
The lift $\hat{G}^0$ is uniquely determined by being an extension of $\hat{h}_1$, and then $\hat{h}_2$ is a unique 
extension of $\hat{G}^0$.
Define the continuous family of $1$-periodic homeomorphisms $T^t: \Pi\ra\Pi$, for $t\in [0,1]$, as
\[T^t(x,y):= (1-y)\cdot \hat{h}_1(x,0)+y \cdot \hat{h}_2^t(x,1).\]
By defining the $1$-periodic homeomorphism $H: \Pi\ra\Pi$ as $(\hat{G}^0)^{-1}  \circ T^0$, one can see that
$T^t \circ H^{-1}$ is a continuous family of gluings of $\hat{h}_1$ and $\hat{h}_2^t$ starting with $\hat{G}^0$. 
This periodic family projects to a continuous family of interpolations of $\hat{h}_1$ and $\hat{h}_2^t$ on $U$. 
\end{proof}
\bibliographystyle{amsalpha}
\bibliography{Data}
\end{document}